\newcommand\norm[1]{\lVert#1\rVert}
\theoremstyle{plain}
\newtheorem{theorem}{Theorem}[section]
\newtheorem{proposition}[theorem]{Proposition}
\newtheorem{lemma}[theorem]{Lemma}
\newtheorem{corollary}[theorem]{Corollary}
\newtheorem{conjecture}[theorem]{Conjecture}
\theoremstyle{definition}
\newtheorem{definition}{Definition}
\newtheorem{example}[theorem]{Example}
\newtheorem*{conjecture*}{Conjecture}
\newcommand\rwh[1]{%
	\savestack{\tmpbox}{\stretchto{%
			\scaleto{%
				\scalerel*[\widthof{\ensuremath{#1}}]{\kern-.6pt\bigwedge\kern-.6pt}%
				{\rule[-\textheight/2]{1ex}{\textheight}}
			}{\textheight}%
		}{0.5ex}}%
	\stackon[1pt]{#1}{\tmpbox}%
}
\newcommand{\mc}{\mathcal}
\newcommand{\mb}{\mathbb}
\newcommand{\mf}{\mathfrak}
\newcommand{\la}{\left\langle}
\newcommand{\ra}{\right\rangle}
\newcommand{\lset}{\left\lbrace}
\newcommand{\rset}{\right\rbrace}
\newcommand{\w}{\wedge}
\newcommand{\lb}{\left(}
\newcommand{\rb}{\right)}
\newcommand{\bc}{\mathbb{C}}
\newcommand{\bn}{\mathbb{N}}
\newcommand{\bq}{\mathbb{Q}}
\newcommand{\br}{\mathbb{R}}
\newcommand{\bz}{\mathbb{Z}}
\newcommand{\tx}{\text}
\newcommand{\ten}{\otimes}
\newcommand{\Hom}{\mathrm{Hom}}
\newcommand{\Ext}{\mathrm{Ext}}
\newcommand{\End}{\mathrm{End}}
\newcommand{\Ad}{\mathrm{Ad}}
\newcommand{\For}{\mathrm{For}}
\newcommand{\Per}{\mathrm{Per}}
\newcommand{\Loc}{\mathrm{Loc}}
\newcommand{\Rep}{\mathrm{Rep}}
\newcommand{\Mod}{\mathrm{Mod}}
\newcommand{\1}{\mathbbm{1}}
\newcommand{\GL}{\mathrm{GL}}
\newcommand{\SL}{\mathrm{SL}}
\newcommand{\PGL}{\mathrm{PGL}}
\newcommand{\rec}{\mathrm{rec}}
\newcommand{\ve}{\varepsilon}
\newcommand{\BM}{\mathrm{BM}}
\author[K. J. Balodis]{Kristaps John Balodis}
\address{Department of Mathematics and Statistics, University of Calgary, 
	2500 University Drive NW, 
	Calgary, Alberta, 
	T2N 1N4, 
	Canada}
\email{kristaps.balodis1@ucalgary.ca}
\title[The $p$-kLH for $\GL(n)$]{Proof of the $p$-adic Kazhdan-Lusztig hypothesis for $\GL(n)$}
\date{\today}
\begin{document}
	
	\maketitle

	\begin{abstract}
		In this article, we prove the $p$-adic Kazhdan-Lusztig hypothesis for $\mathrm{GL}_n(F)$, as first put forward by \cite{Zel}, and later generalized by \cite{Vog}.  
		The proof given here relies  on  the celebrated theorem \cite{CG}*{Theorem 8.6.23} which establishes an analogue of the Kazhdan-Lusztig hypothesis for affine Hecke algebras. 
		The main work in this paper is in carefully tracking through various equivalences of categories and isomorphisms to ensure simple and standard objects are matched appropriately. 
		Along the way we establish several folkloric results regarding standard representations/modules, and the combinatorics of Zelevinsky multisegments. 
	\end{abstract}
	
	\tableofcontents
	
	\section{Introduction}
	
	\subsection{Overview and main results}
	
	The main goal of this paper is to establish the $p$-adic Kazhdan-Lusztig hypothesis for $\mathrm{GL}_n(F)$, where $
	F/\mb{Q}_p$ is a
	$p$-adic field, as originally articulated by \cite{Zel}, and later generalized by \cite{Vog}. Roughly speaking, the $p$-adic Kazhdan-Lusztig hypothesis predicts that multiplicities of smooth irreducible representations in standard representations can be computed via stalks of equivariant perverse sheaves on moduli spaces of Langlands parameters.
	
	To be more precise, we will freely use standard terminology in the local Langlands program.
	For simplicity, we let $G$ be the $F$-points of a split, connected, reductive group over a $p$-adic field $F$, and $\hat{G}$ its complex dual group.  
	The reciprocity map of the local Langlands correspondence associates to every smooth irreducible representation $\pi$ of a $p$-adic group $G(F)$, a group homomorphism $\phi:W_F\ltimes \bc \to \hat{G}(\bc)$, for the Weil group $W_F$.  
	The infinitesimal parameter is defined to be $\lambda :=\phi|_{ W_F }$ .  
	One can form a variety $V_\lambda$ (Equation \ref{eqn: Vl}), together with an action of an algebraic group $H_\lambda$ (Equation \ref{eqn: Hl}), where the (finitely many) orbits are in bijection with the Langlands parameters having infinitesimal parameter $\lambda$.  
	The local Langlands correspondence predicts that there is a bijection between smooth irreducible representations of infinitesimal parameter $\lambda$, and pairs $(C, \mc{L})$ where $C$ is an $H_\lambda$-orbit of $V_\lambda$ and $\mc{L}$ is an irreducible $H_\lambda$-equivariant local system on $C$.  
	Writing $\pi(C, \mc{L})$ for the unique irreducible representation associated to $(C, \mc{L})$ (assuming Langlands reciprocity for $G$) and $S(C, \mc{L})$ for the standard representation of $\pi(C, \mc{L})$ (Definition \ref{def: lc}),
	the $p$-adic Kazhdan-Lusztig hypothesis predicts that:
	\begin{align}\label{eqn: pklhintro}
		[S(C, \mc{L}): \pi(D, \mc{F})]&=\sum_{n\in \bz}[\mathcal{H}^n(\mc{IC}(D, \mc{F}))|_{C} : \mc{L}]
	\end{align}
	where $[A:B]$ means the number of times $B$ occurs in the composition series of $A$, and $\mc{IC}(D, \mc{F})$ is the intersection cohomology complex associated to $\mc{F}$. 
	
	\subsection{Historical context and motivation}
	
	The original Kazhdan-Lusztig hypothesis, formulated in \cite{KL} for a semisimple complex Lie group $G$ with Lie algebra $\mf{g}$, predicted that multiplicities of simple $\mf{g}$-modules in Verma modules can be computed in terms of dimensions of stalks of perverse sheaves on $G/B$ where $B$ is a Borel subgroup, and was proved by Beilinson-Bernstein \cite{BB} and Brylinski-Kashiwara \cite{BryKas}. 
	Moreover, it was shown that the stalks of the perverse sheaves on $G/B$ can be described by generating functions known as Kazhdan-Lusztig polynomials, which admit a purely combinatorial description.
	Thus, while computing multiplicities directly at the module level is difficult, it can be reduced to a combinatorial problem solvable by a computer algebra system. 
	
	The Kazhdan-Lusztig hypothesis for real groups was proved in \cite{VogProof}.   
	This led to the Atlas project, enabling the first complete understanding of representations of the split form of 
	$E_8$, which would have been computationally infeasible without Kazhdan-Lusztig theory.
	
	Given all of the above, one should expect that the $p$-adic version, which we establish for $\GL_n(F)$, may offer similar computational (and theoretical) advantages.
	For example, \cite{BCFZ} establishes that the forwards direction of some conjectures of Gross-Prasad and Shahidi follow for groups which satisfy the $p$-adic Kazhdan-Lusztig hypothesis.

	\subsection{The approach in this paper}

	The proof strategy in this paper builds on the celebrated theorem of \cite{CG}*{Theorem 8.6.23}, (see Equation \ref{eqn: cgpklh}) which states that 
	$$[H^\bullet(\mc{B}_x^s)_\xi:L_\gamma]=\sum_{k\in \bz}\dim \mc{H}^k(i_x^\ast P_\gamma)_\xi=\sum_{k\in \bz}[ \mc{H}^k(i_x^\ast P_\gamma):\mc{L}_\xi].$$
	
	The $P_\gamma$ and $L_\xi$ are respectively intersection cohomology complexes, and local systems on what is essentially a Vogan variety $V_\lambda$, though it is not described there as such.  
	Meanwhile, the left hand side is a statement about multiplicities of modules over an algebra $E_{s, q}$ (see Section 3.1), constructed from the geometry of $V_\lambda$.  (The indices $s, q$ are determined by $\lambda$.)   
	The above equation is of course extremely similar to Equation \ref{eqn: pklhintro}, however connecting the equations requires overcoming several technical obstacles.
	
	The variety $V_\lambda$ is constructed from a complex Lie group $\hat{G}(\bc)$, 
	and strictly speaking \cite{CG}*{Theorem 8.6.23} only applies when $\hat{G}(\bc)$ is semisimple and simply connected.  
	However, in order to derive the $p$-adic Kazhdan-Luztig hypothesis for $\GL_n(F)$, we need to take $\hat{G}(\bc)=\GL_n(\bc)$ which is neither semismple nor simply connected. 
	Fortunately, each of the varieties $V_\lambda$ for $\GL_n(\bc)$ are isomorphic to a variety for $\SL_n(\bc)$.  
	The groups $H_\lambda$ and the algebra $H_\lambda$ differ, but in a way that is straightforward to manage.  
	
	In order to apply \cite{CG}*{Theorem 8.6.23} to representations of $G=\GL_n(F)$, one must also understand how the algebras $E_{s, q}$ are related to the Iwahori-Hecke algebra $\mc{H}(G, \mc{I})$ (isomorphic to an affine Hecke algebra).
	Every irreducible module of $\mc{H}(G, \mc{I})$ admits a central character.  
	Given such a central character $\chi$, let $K_\chi$ be the (left) ideal generated by $\ker(\chi)$, then $\pi$ determines an $\mc{H}(G, \mc{I})/K_\chi$-module.  
	For every pair $(s, q)$, there exists a central character $\chi$ such that $ E_{s, q}\cong \mc{H}(G, \mc{I})/K_\chi$, and thus by extending scalars along the projection 
	$$\varphi:\mc{H}(G, \mc{I})\to \mc{H}(G, \mc{I})/K_\chi \cong  E_{s, q},$$
	every irreducible $E _\lambda$-module determines an irreducible $\mc{H}(G, \mc{I})$-module, and by \cite{CG}{Theorem 8.1.16} every  irreducible  $\mc{H }(G , \mc{ I} )$-module  arises  in this way.   
	However, it is unclear given one of the simple $E_{(s, q)}$-modules $L_\xi$, which $\mc{H}(G, \mc{I})$-module $\varphi_\ast(L_\xi)$ is, and therefore which irreducible representation of $\GL_n(F)$ it corresponds to.  
	Likewise, it is unclear that the modules $H^\bullet(\mc{B}_x^s)_\xi$ should be sent to standard representations.  
	We will not actually carry out this precise comparison here.    
	Instead, we rely on the work of \cite{Ari} which only relates these modules on the level of the Grothendieck group, as this is sufficient to prove the $p$-adic Kazhdan-Lusztig hypothesis.  It would be interesting to find a proof that establishes the result on the level of modules. 
	
	Even if one were able to account for the bookkeeping of modules as above, the results of \cite{CG} only account for modules over affine Hecke algebras, and therefore in general can only resolve the $p$-adic Kazhdan-Lusztig hypothesis for representations belonging to the principal block of $\mathrm{Rep}(\mathrm{GL}_n(F))
	$.
	However, it is known that every Bernstein block $\Rep(\GL_n(F))_\tau$ of $\Rep(\GL_n(F))$ is equivalent to a category of modules over an affine Hecke algebra (and thus to the principal block) of some $\GL_m(E)$ for an integer $m<n$, and a $p$-adic field $E$. 
	That being said, in order leverage this fact, and the $p$-adic Kazhdan-Lusztig hypothesis for the principal block in order to obtain the general result, one must carefully track where irreducible and standard representations are sent to under the equivalence with a principal block, as well as prove corresponding relations between the associated Vogan varieties, which is done in this article.

	\subsection{Related work}
	
	The result \cite{Solp}*{Theorem 5.4} is an analogue of the $p$-adic Kazhdan-Lusztig hypothesis, but for modules over \emph{graded} affine Hecke algebras.  
	In conjunction with \cite{Sol2}*{Lemma 6.2}, which ensures that standard \emph{modules} of graded affine Hecke algebras correspond to standard \emph{representations}, the $p$-adic Kazhdan-Lusztig hypothesis as stated by \cite{Vog}, has been proven in the cases listed in \cite{Solp}*{Theorem 5.4}.  
	In particular, the $p$-adic Kazhdna-Lusztig hypothesis has not only been proven for $\GL_n(F)$, but for a wider class of groups and representations that one could obtain from applying \cite{CG}*{Proposition 8.6.23}.  
	The present article still exists  as it makes clear the connection between the version for affine Hecke algebras and representations of $p$-adic groups.  
	We also mention that the $p$-adic Kazhdan-Lusztig hypothesis has been verified for unipotent representations of $G_2$ in \cite{CFZ:cubic} and \cite{CFZ:unipotent}.

	The author would like to thank Maarten Solleveld and Chris Jantzen for their help, and a special thanks to Clifton Cunningham, who supervised this project. 
	

	\section{Representation Theory of $\GL_n(F)$}
	
	In this section we prepare a number of technical results about the representation theory of $\GL_n(F)$.  
	In particular, in Section 2.2 we prove Theorem \ref{thm: equiv_seg_stand} that the representations $S_Q(a)$ defined in terms of multisegments are in fact the representations described by the Langlands classification theorem.  
	Pairing Theorem \ref{thm: equiv_seg_stand} with Proposition \ref{prop: zmult=qmult} (proved in Section 2.3) will allow us to compare \cite{Zel}*{Hypothesis 1.9} and \cite{Vog}*{Conjecture 8.11}.  
	In Section 2.4 we establish Proposition \ref{prop: prodmult} which allows one to decompose general multisegments in terms of simpler ones, which will be used in deriving the general case of the $p$-adic Kazhdan-Lusztig hypothesis from the case of representations $S_Q(a), Q(b)$ where $a, b$ have \emph{simple} inertial support (Definition \ref{def: simp}). 
	One of the main results of this section is Theorem \ref{thm: heckemod} which characterizes the modules over a version of the affine Hecke algebra described by \cite{Ari}, which correspond to standard representations, or rather, to the representations $S_Z(a)$. 
	
	\subsection{Standard representations}
	
	In this section we summarize the Langlands classification theorem as it appears in \cite{Kon}.  
	The result of the classification theorem is crucial for defining standard representations, which are a key component of the $p$-adic Kazhdan-Lusztig hypothesis.  
	
	Let $G$ be the $F$-points of a connected reductive algebraic group defined over $F$, fix a maximal $F$-split torus $T_0$ of $G$, and let $M_0=C_G(T_0)$ which is a minimal Levi subgroup of $G$.  
	For any Levi subgroup $M$, we let $X^\ast(M)_F$ denote the group of $F$-rational characters, and set
	\begin{align*}
		\mf{a}_M&:=\Hom(X^\ast(M)_F, \br) \\
		\mf{a}_M^\ast&:=X^\ast(M)_F\ten\br. 
	\end{align*}
	
	Suppose $M$ is a minimal Levi subgroup, and let $\Sigma$ be the roots of the maximal torus $A$ of $M_0$. 
	Each parabolic $P$ containing $M$ determines a set $\Sigma_P$ of $P$-positive elements.  
	A parabolic $P$ is said to be \emph{standard} if it contains $M_0$.  
	A minimal standard parabolic $P_0$  determines a set of simple roots $\Delta_{P_0}$ in $\Sigma_{P_0}$.  
	
	For any standard parabolic $P$ with Levi decomposition $P=MU$, define
	\begin{align*}
		\Delta_P&:=\{\alpha_0|_{\mf{a}_M} : \alpha_0\in \Delta_{P_0}-\Delta_{P_0\cap M}\},\\
		\mf{a}_P^{\ast, +}&:=\{\lambda \in\mf{a}_M^\ast : \forall \alpha\in \Delta_P, \alpha^\vee(\lambda)>0\}.
	\end{align*}
	
	\begin{theorem}[\cite{Kon}, Theorem 3.5]\label{thm: lc}
		For any irreducible admissible representation $(V, \pi)$ of  $G(F)$, there exists a standard parabolic $P=MU$ and an irreducible tempered representation $\tau$ of $M(F)$, and $\lambda\in\mf{a}_P^{\ast, +}$ such that $\pi$ is the unique irreducible quotient of $I_P^G(e^\lambda\ten\tau)$, where $I_P^G$ is the normalized parabolic induction functor.  
		Moreover, the triple $(P, \tau, \lambda)$ is uniquely determined up to $W$-conjugacy. 
	\end{theorem}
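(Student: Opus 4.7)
The plan is to follow the classical approach via Jacquet modules and Casselman's criterion, adapted from Langlands' original argument for real groups to the $p$-adic setting (as in Borel--Wallach and Silberger).

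For the existence part, I would take an irreducible admissible representation $(V,\pi)$ and consider, for each standard parabolic $P=MU$, the normalized Jacquet module $r_P(\pi)$. By admissibility, each $r_P(\pi)$ is an admissible $M$-representation of finite length, and by Frobenius reciprocity $\pi$ embeds into $I_P^G r_P(\pi)$ (equivalently, is a quotient by second adjointness) whenever $r_P(\pi)\neq 0$. Each irreducible subquotient $\sigma$ of $r_P(\pi)$ determines an exponent in $\mf{a}_M^\ast$ via its central character on the split component $A_M$. Among the pairs $(P,\sigma)$ such that the exponent of $\sigma$ lies in the closed positive cone $\overline{\mf{a}_P^{\ast,+}}$, I would choose $P$ minimal. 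Writing $\sigma\cong e^\lambda\ten\tau$ with $\lambda\in\mf{a}_P^{\ast,+}$ and $\tau$ having unitary central character on $A_M$, minimality of $P$ combined with Casselman's temperedness criterion forces $\tau$ to be tempered, and Frobenius reciprocity then presents $\pi$ as a quotient of $I_P^G(e^\lambda\ten\tau)$.

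Next, for uniqueness of the irreducible quotient of the standard module, I would invoke the Langlands quotient theorem: when $\lambda\in\mf{a}_P^{\ast,+}$ is strictly positive and $\tau$ is tempered, the standard intertwining operator from $I_P^G(e^\lambda\ten\tau)$ to $I_{\bar P}^G(e^\lambda\ten\tau)$ is holomorphic and has image equal to the unique irreducible quotient, which coincides with the unique irreducible submodule of $I_{\bar P}^G(e^\lambda\ten\tau)$. For uniqueness of the triple $(P,\tau,\lambda)$ up to $W$-conjugacy, I would compute Jacquet modules of the irreducible quotient at different standard parabolics, and use the disjointness of closed positive chambers across distinct standard Levi subgroups together with the uniqueness of Langlands data for tempered representations of Levi subgroups.

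The main obstacle is Casselman's criterion --- characterizing temperedness precisely in terms of the exponents appearing in Jacquet modules --- together with the bookkeeping needed to show that the minimal-cone choice of $P$ in the existence step really yields a tempered $\tau$. A secondary obstacle is proving that the standard intertwining operator is holomorphic and surjects onto the unique irreducible quotient at strictly dominant $\lambda$; this rests on reducibility results for parabolic induction of tempered representations and the asymptotic analysis of intertwining operators due to Silberger and Waldspurger. Both ingredients are available in the literature, so the proof amounts to carefully assembling them, which is what Konno does.
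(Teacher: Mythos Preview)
The paper does not prove this theorem at all: it is stated as a quotation of \cite{Kon}*{Theorem 3.5} and used as a black box, so there is no ``paper's own proof'' to compare against. Your outline is a reasonable sketch of the standard argument (Jacquet modules and exponents, Casselman's temperedness criterion, and the Langlands quotient via intertwining operators), and is indeed close in spirit to what Konno carries out in the cited reference; but as far as this paper is concerned, the intended ``proof'' is simply the citation.
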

	
	\begin{definition}\label{def: lc}
		Fix a choice of minimal parabolic $P_0$ of $G$. 
		Given an irreducible representation $\pi$ of $G$, there is a unique representation $I_P^G(e^\lambda\ten\tau )$ determined by the above theorem such that $P$ is standard.  
		We will refer to $I_P^G(e^\lambda\ten\tau)$ as the \emph{standard representation} of $\pi$ and denote it by $S_\pi$.  
		Given an irreducible representation $\pi$, we will refer to any one of the $W$-conjugate triples $(P, \tau, \lambda)$ determined by $\pi$ by the above theorem as a \emph{Langlands triple} or \emph{Langlands data}.  
	\end{definition}


	\subsection{Between the Langlands and Zelevinsky classification theorems}

	The main result of this section is Theorem \ref{thm: equiv_seg_stand}, which establishes that $S_Q(a)$ are exactly the standard representations of Definition \ref{def: lc}.  
	In order to carry out the proof, we will need to first establish several technical results about multisegments.  
	
	We first recall the classification of smooth irreducible representations of $\GL_n(F)$ in terms of \emph{multisegments} as developed by \cite{ZelI2}. 
	Let $\nu$ be the character of $\GL_n(F)$ determined by $\nu(g)=|\det(g)|_F$.  
	For any $c\in \bc$, we write $\rho(c):=\nu^c\ten\rho=\nu^c\rho$.  
	
	\begin{definition}
		A \emph{segment} $\Delta=[\rho(a), \rho(b)]$ is an ordered set 
		$$\{\rho(a), \rho(a+1), \ldots, \rho(b)\},$$
		where $\rho$ is a supercuspidal representation.
	\end{definition} 
	
	Recall that each partition $\vec{n}=(n_1, \ldots, n_r)$ of $n=n_1+\cdots + n_r$ determines a Levi subgroup, 
	$$M_{\vec{n}}=\lset\begin{pmatrix}
		A_1 & 0 & ... & 0 \\
		0 & A _2 & ... & 0 \\
		... & & ... & ... \\
		0 & 0 & ... & A_r
	\end{pmatrix} : A_i\in \GL_{n_i}\rset,$$
	and every Levi subgroup of $\GL_n(F)$ is conjugate to a Levi subgroup of this form.  
	Let $P_{\vec{n}}$ denote the \emph{standard} parabolic subgroup, consisting of all matrices of the form
	$$\begin{pmatrix}
		A_1 & 0 & ... & 0 \\
		0 & A _2 & ... & 0 \\
		... & & ... & ... \\
		0 & 0 & ... & A_r
	\end{pmatrix}
	\begin{pmatrix}
		I_1 & N_{12} & \ldots & N_{1r} \\
		0 & I_2 & \ldots & N_{2r} \\
		\ldots & & \ldots & \ldots \\
		0 & 0 & \ldots & I_r
	\end{pmatrix}.$$
	
	Given a Levi subgroup $M_{n_1, \ldots, n_r}$ of $\GL(n)$, where $n = n_1+\cdots + n_r$, and representations $\sigma_i\in \Rep(\GL(n_i))$, we write
	$$\sigma_1\times\cdots \times \sigma_r:=I_{P_{\vec{n}}}^G\lb \sigma_1\boxtimes\cdots \boxtimes\sigma_r\rb,$$
	for the normalized parabolic induction. 
	
	To any segment $\Delta=[\rho(a), \rho(b)]$, we associate a representation
	$$S_Z(\Delta):=\rho(a)\times\rho(a+1)\times\cdots \times \rho(b),$$
	which by \cite{ZelI2}*{Proposition 2.10},  has a unique irreducible subrepresentation $Z(\Delta)$, and a unique irreducible quotient $Q(\Delta)$.   
	
	\begin{definition}
		A \emph{multisegment} is an \emph{ordered} multiset of segments.  
	\end{definition}
	
	We will make use of the following definition. 
	
	\begin{definition}
		Given a multisegment $a=\{\Delta_1, \ldots, \Delta_r\}$,  we define the respective \emph{Zelevinsky sub-standard representation} and \emph{Zelevinksy quotient-standard} representations
		\begin{align*}
			S_Z(a)&:=Z(\Delta_1)\times\cdots \times Z(\Delta_r)\\
			S_Q(a)&:=Q(\Delta_1)\times\cdots \times Q(\Delta_r). 
		\end{align*}
	\end{definition}

	\begin{definition}[Does Not Proceed]\label{dnp}
		Two segments $\Delta=[\rho_1, \rho_1'], \Delta'=[\rho_2, \rho_2']\in a$ are said to be \emph{linked} if neither is contained in the other, and their union is a segment.  
		If they are linked and $\rho_2\cong \nu^k\rho_1$ for an integer $k>0$, then we say that $\Delta_1$ \emph{precedes} $\Delta_2$. 
		If $\Delta_1$ and $\Delta_2$ are linked and $\Delta_1\cap \Delta_2=\emptyset$, then we say they are \emph{juxtaposed}.
		
		A multisegment $a=\{\Delta_, \ldots, \Delta_r\}$ is said to satisfy the \emph{Does Not Precede} condition if for $i<j$, then $\Delta_i$ does not precede $\Delta_j$. 
	\end{definition}
	
	By \cite{ZelI2}*{Theorem 6.1}, if $a$ satisfies Definition \ref{dnp}, then $S_Z(a)$ has a unique irreducible subrepresentation which we denote by $Z(a)$, and every irreducible representation appears as $Z(a)$ for some multisegment $a$ satisfying Definition \ref{dnp}.   
	Likewise, by \cite{Rod}*{Theorem 3}, if $a$ satisfies Definition \ref{dnp}, then $S_Q(a)$ contains a unique irreducible quotient denoted $Q(a)$, and every irreducible representation of $\GL_n(F)$ arises as some $Q(a)$.

	The following example is included to highlight that, while not particularly complicated, the translation between multisegments and Langlands data is not as straightforward as one might initially expect. 
	
	\begin{example}
		Let $\nu:\GL_1(\bq_p)\to \bc$ by the norm character $\nu(g)=|g|_F$ and consider the multisegment $a=\{[\nu^0, \nu^1], [\nu^0, \nu^1]\}$ of $\GL_4(F)$.  
		Since the corresponding representation $S_Q(a)$ is induced from $P_{2, 2}$, one might expect that $P_{2, 2}$ is the parabolic in the corresponding Langlands triple.  
		However, writing $\tau:=Q[\nu^{-1/2}, \nu^{1/2}]$, by Lemma \ref{lem: twist_seg} below $Q[\nu^0, \nu^1]\cong (\nu^{1/2}\circ \det )\ten\tau$, and thus
		$$S_Q(a)=I_{P_{2, 2}}^G\lb Q[\nu^0, \nu^1]\boxtimes Q[\nu^0, \nu^1]\rb\cong I_{P_{22}}^G\lb(\nu^{1/2}\boxtimes \nu^{1/2})\ten (\tau\boxtimes \tau)\rb.$$
		While $P_{2, 2}$ is a standard parabolic subgroup and $\tau\boxtimes \tau$ is tempered (see Theorem \ref{thm: temp}), $\nu^{1/2}\boxtimes \nu^{1/2}$ is not $P_{2, 2}$-poistive, and therefore e
		$(P_{2, 2}, \tau\boxtimes \tau, \nu^{1/2}\boxtimes \nu^{1/2})$ does not define a Langlands triple.  
		However, 
		\begin{align*}
			S_Q(a)&\cong I_{P_{22}}^G\lb(\nu^{1/2}\boxtimes \nu^{1/2})\ten (\tau\boxtimes \tau)\rb\\
			&\cong (\nu^{1/2}\circ \det)\ten I_{P_{22}}^G\lb \tau\boxtimes \tau\rb\\
			&\cong I_G^G\lb \nu^{1/2}\ten I_{P_{22}}^G\lb \tau\boxtimes \tau\rb\rb.
		\end{align*}
		
		The representation $I_{P_{22}}^G\lb \tau\boxtimes \tau\rb$ is irreducible and tempered by Theorem \ref{thm: temp}, and $\nu^{1/2}=e^{\mu}$ where for some $\mu\in \mf{a}_G^{\ast, +}$.
		Therefore the Langlands triple associted to $S_Q(a)$ is $(G, I_{P_{22}}^G\lb \tau\boxtimes \tau\rb, \mu)$.  
		Hence, for example, the parabolic subgroup in the triple defining the standard representation isomorphic to $S_Q(a)$ is not the parabolic defining the inducing data of $S_Q(a)$. 
	\end{example}
	
	We now establish several technical lemmas which we require for the proof of Theorem \ref{thm: equiv_seg_stand}. 
	
	\begin{definition}
		We call $\{\rho(c) | c\in \bc\}$ the \emph{inertial support} of $\Delta$, and note that (up to isomorphism) for any member $\rho'$ of the inertial support of $\Delta$, we can choose some $a', b'\in \bc$ such that $\Delta=[\rho'(a'), \rho'(b')]$. 
		If the trivial representation of $\GL_1(F)$ (which is supercuspidal) belongs to the cuspidal support of $\Delta$, then we say that $\Delta$ has \emph{trivial} inertial support.  
		We say that a multisegment has trivial inertial support, if all of its segments have trivial inertial support. 
	\end{definition}
	
	\textbf{Note:} The above is not to be confused with the cuspidal support of a \emph{representation}.  
	The concepts are distinct, but related, though there should be no chance of confusion as we will not discuss the inertial support of representations in this article. 
	
	We will write $\nu:\GL_n(F)\to \bc^\times$ be the norm-character $\nu(g)=\norm{\det g}_F$, suppressing the dependence of $n$ and $F$. 
	When $n=1$, $\nu$ is supercuspidal, and we will simply write $\Delta=[a, b]$ for the segment
	$$\Delta=\{\nu^a, \nu^{a+1}, \ldots, \nu^b\}.$$
	Note that these are exactly the segments of trivial cuspidal support.  
	
	If $\Delta, \Delta'$ have distinct inertial support, then they cannot be linked.  
	If they do have the same inertial support, then fixing a representative $\rho$ , we can write $\Delta=[\rho(a), \rho(b)], \Delta'=[\rho(c), \rho(d)]$. 
	Then, $\Delta, \Delta'$ are linked if and only if,
	\begin{enumerate}
		\item $(a-c)\in \bz$, and for their real parts
		\item $\Re(a)<\Re(c)$ and $\Re(c-1)\leq \Re(b)<\Re(d)$, or \\
		\item $\Re(c)\leq \Re(a)\leq \Re(d+1)$ and $\Re(d)<\Re(b)$.
	\end{enumerate}
	
	We define a notion of the \emph{midpoint} of a segment $\Delta=[\rho(a), \rho(b)]$, relative to the supercuspdial $\rho$ to be
	$$m_{\rho}(\Delta):=\frac{a+b}{2}.$$
	Observe that (up to isomorphism of the elements) we can write $\Delta=[\rho(c)(a-c), \rho(c)(b-c)]$, and thus
	$$m_{\rho(c)}(\Delta):=\frac{(a-c)+(b-c)}{2}=m_\rho(\Delta)-c.$$
	
	We also define the \emph{length}
	$$\ell(\Delta):=b-a+1,$$
	which, being the number of terms in $\Delta$ is independent of the actual representations it contains.

	\begin{lemma}\label{lem: can ord}
		Let $\Delta$ and $\Delta'$ be segments with inertial support determined by $\rho$. 
		If $m_\rho(\Delta)=m_\rho(\Delta')$, then $\Delta$ and $\Delta'$ are not linked, and if
		$m_\rho(\Delta)< m_\rho(\Delta')$, then $\Delta$ precedes $\Delta'$.  
		Together, this means that if $m_\rho(\Delta)\geq m_\rho(\Delta')$, then $\Delta$ does not precede $\Delta'$. 
	\end{lemma}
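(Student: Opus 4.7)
The plan is to reduce both assertions to comparisons of real parts of $a+b$ and $c+d$, where $\Delta=[\rho(a),\rho(b)]$ and $\Delta'=[\rho(c),\rho(d)]$, and then feed these into the two-case description of linkedness recalled just above the lemma. Since $2m_\rho(\Delta)=a+b$ and $2m_\rho(\Delta')=c+d$, the midpoint hypotheses translate directly into comparisons of $\Re(a+b)$ and $\Re(c+d)$.

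For the first claim, I would argue by contradiction: suppose $m_\rho(\Delta)=m_\rho(\Delta')$ yet $\Delta$ and $\Delta'$ are linked. Then one of the two disjunctive subcases from the linkedness criterion must hold, namely either $\Re(a)<\Re(c)$ together with $\Re(b)<\Re(d)$, or else $\Re(c)\leq\Re(a)$ together with $\Re(d)<\Re(b)$. Adding the two inequalities in each case yields respectively $\Re(a+b)<\Re(c+d)$ or $\Re(c+d)<\Re(a+b)$, both of which contradict $a+b=c+d$.

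For the second claim, I read it with the implicit understanding that linkedness is already in force, since by Definition \ref{dnp} ``precedes'' is only defined for linked pairs. Assuming $m_\rho(\Delta)<m_\rho(\Delta')$, the same case analysis immediately excludes the second subcase (it would force $\Re(a+b)>\Re(c+d)$), leaving the first: $\Re(a)<\Re(c)$ together with $c-a\in\bz$. Hence $c-a$ is a positive integer $k$, so $\rho(c)\cong\nu^{k}\rho(a)$ and by Definition \ref{dnp}, $\Delta$ precedes $\Delta'$. The final ``Together'' statement is then a contrapositive: if $m_\rho(\Delta)\geq m_\rho(\Delta')$, then either the midpoints agree, in which case the segments are not linked and precedence fails; or $m_\rho(\Delta)>m_\rho(\Delta')$, in which case the roles-swapped version of the second claim shows that $\Delta'$ precedes $\Delta$ whenever they are linked, and by antisymmetry $\Delta$ does not precede $\Delta'$.

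There is no substantive obstacle; the proof is essentially bookkeeping with the linkedness conditions. The only subtlety worth flagging is that the second claim must be read with linkedness as an implicit hypothesis (as otherwise ``$\Delta$ precedes $\Delta'$'' is definitionally impossible), and that the combined statement exploits the antisymmetry of precedence, which is immediate from the asymmetric role played by the positive integer $k>0$ in Definition \ref{dnp}.
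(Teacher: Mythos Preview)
Your proposal is correct and follows essentially the same strategy as the paper: translate the midpoint hypotheses into inequalities on the real parts of the endpoints and feed these into the linkedness criterion. The organization differs slightly---the paper argues the equal-midpoint case by showing one segment must contain the other, and for the strict case it proves the ``Together'' conclusion directly (assuming $m>m'$ and deriving a contradiction from $\Delta$ preceding $\Delta'$), whereas you prove the second claim first and then obtain the ``Together'' statement via role-swapping and antisymmetry---but the underlying arithmetic is the same addition-of-inequalities argument. Your explicit flagging that the second claim only makes sense under an implicit linkedness hypothesis is a useful clarification.
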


	\begin{lemma}\label{lem: twist_seg}
		Let $\Delta=[\rho(a_1), \rho(a_r)]$ be a segment, and $\chi$ a smooth character of $\GL_1(F)$.  
		Abusing notation, we will also write $\chi$ for the character $\chi(\det(g))$ of $\GL_n(F)$.   
		Then,
		\begin{align*}
			\chi\ten Z(\Delta)&\cong (Z[\chi\rho(a_1), \chi\rho(a_r)])\\
			\chi\ten Q(\Delta)&\cong Q([\chi\rho(a_1), \chi\rho(a_r)]).
		\end{align*}
		
	\end{lemma}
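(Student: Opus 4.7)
The plan is to exploit the fact that tensoring by a one-dimensional character $\chi \circ \det$ of $\GL_n(F)$ commutes with normalized parabolic induction. Concretely, for a Levi $M_{\vec{n}} \subset \GL_n(F)$ and a representation $\sigma_1 \boxtimes \cdots \boxtimes \sigma_r$ of $M_{\vec{n}}$, the restriction of $\chi \circ \det$ to $M_{\vec{n}}$ is $\chi \boxtimes \cdots \boxtimes \chi$ (each factor via $\det$ on its $\GL_{n_i}$), and hence by the projection formula
\[
\chi \otimes \bigl(\sigma_1 \times \cdots \times \sigma_r\bigr) \;\cong\; (\chi \otimes \sigma_1) \times \cdots \times (\chi \otimes \sigma_r).
\]
This is standard for normalized induction because the modulus character depends only on the parabolic, not the inducing data, so it is unaffected by the twist.

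With this in hand, the proof proceeds in two short steps. First, I would observe that $\chi\rho$ is supercuspidal whenever $\rho$ is (tensoring by a character preserves supercuspidality), and that $\chi \otimes \rho(c) = \chi\nu^c\rho = \nu^c(\chi\rho) = (\chi\rho)(c)$. Consequently, setting $\rho' := \chi\rho$, the set $\{\chi\rho(a_1), \ldots, \chi\rho(a_r)\} = \{\rho'(a_1), \ldots, \rho'(a_r)\}$ is again a genuine segment, so $[\chi\rho(a_1), \chi\rho(a_r)]$ makes sense and
\[
\chi \otimes S_Z(\Delta) \;=\; \chi \otimes \bigl(\rho(a_1) \times \cdots \times \rho(a_r)\bigr) \;\cong\; (\chi\rho)(a_1) \times \cdots \times (\chi\rho)(a_r) \;=\; S_Z\bigl([\chi\rho(a_1), \chi\rho(a_r)]\bigr).
\]

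Second, I would invoke that $\chi \otimes (-)$ is an exact autoequivalence of $\Rep(\GL_n(F))$ with inverse $\chi^{-1} \otimes (-)$; in particular it preserves irreducibility, subobjects, and quotients, along with their uniqueness. Applying this to the unique irreducible subrepresentation $Z(\Delta) \hookrightarrow S_Z(\Delta)$ from \cite{ZelI2}*{Proposition 2.10} yields a unique irreducible subrepresentation
$\chi \otimes Z(\Delta) \hookrightarrow \chi \otimes S_Z(\Delta) \cong S_Z([\chi\rho(a_1), \chi\rho(a_r)])$,
which by definition must be $Z([\chi\rho(a_1), \chi\rho(a_r)])$. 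The identical argument on the quotient side identifies $\chi \otimes Q(\Delta)$ with $Q([\chi\rho(a_1), \chi\rho(a_r)])$.

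There is essentially no main obstacle here: the content is the projection formula for parabolic induction against a character, and the rest is a formal uniqueness argument. The only small care is in confirming that the image segment $[\chi\rho(a_1), \chi\rho(a_r)]$ is indeed a segment in the sense of the paper, which reduces to the identity $\chi \otimes \nu^c\rho = \nu^c(\chi\rho)$ and the preservation of supercuspidality under character twists.
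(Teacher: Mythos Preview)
Your proposal is correct and follows essentially the same approach as the paper: both establish that twisting by $\chi\circ\det$ commutes with parabolic induction, then use exactness of $\chi\otimes(-)$ together with the uniqueness of the irreducible sub/quotient of the induced representation attached to a segment. The only cosmetic difference is that the paper writes down the intertwiner $f\mapsto f_\chi$ with $f_\chi(x)=\chi(x)f(x)$ explicitly, whereas you invoke the projection formula as a known fact; your added remark that $\chi\rho$ remains supercuspidal (so the twisted set is again a segment) is a point the paper leaves implicit.
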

	\begin{proof}
		We will just prove the second isomorphism as the proof of the first is entirely similar. 
		
		For any segment $\Delta=[\rho(a_1), \rho(a_r)]$ and $b \in \bc$,
		$$S_Q(\Delta)=I_P^G(\boxtimes_{i=1}^r\nu^{a_i}\rho)\cong I_P^G\lb\lb\boxtimes_{i=1}^r\nu^b\rb\ten\lb \boxtimes_{i=1}^r \nu^{a_i-b}\rho\rb\rb\cong \nu^b\ten I_P^G\lb \boxtimes_{i=1}^r\nu^{a_i-b}\rho\rb.$$
		Since tensoring by characters is an exact functor, by applying it to the sequence
		$$I_B^G\lb \nu^{a_1}\rho\boxtimes\cdots \boxtimes \nu^{a_r}\rho\rb\to Q([\nu^{a_1}\rho, \nu^{a_r}\rho])\to 0,$$
		we get 
		$$\chi\ten I_B^G\lb \nu^{a_1}\rho\boxtimes\cdots \boxtimes \nu^{a_r}\rho\rb\to \chi\ten Q([\nu^{a_1}\rho, \nu^{a_r}\rho])\to 0.$$
		Given $f\in I_B^G\lb \chi\nu^{a_1}\rho\boxtimes\cdots\boxtimes  \chi\nu^{a_r}\rho\rb$, define $f_\chi(x):=\chi(x)f(x)$.  
		This determines an isomorphism,
		\begin{align*}
			I_B^G\lb \chi\nu^{a_1}\rho\boxtimes\cdots\boxtimes  \chi\nu^{a_r}\rho\rb &\to  \chi^b\ten I_B^G\lb \nu^{a_1}\rho\boxtimes\cdots \boxtimes\nu^{a_r}\rho\rb\\
			f&\mapsto f_\chi
		\end{align*}
		Therefore $[\nu^{a_1}\chi\rho, \nu^{a_r}\chi\rho]$ is still a segment, and the induced representation has unique irreducible quotient $Q([\nu^{a_1}\chi\rho, \nu^{a_r}\chi\rho])\cong \chi\ten Q([\nu^{a_1}\rho, \nu^{a_r}\rho])$.  
	\end{proof}
	
	We will make use of the following result below.  
	
	\begin{lemma}[\cite{Rod}*{Proposition 11}]\label{lem: Rod}
		A representation $Q(\Delta)$ is square-integrable if $\Delta=[\rho(a), \rho(b)]$ and  $\rho((a+b)/2)$ is unitary. 
	\end{lemma}

	\begin{lemma}\label{lem: sqr}
		Let $\rho$ be a supercuspidal representation of $\GL(n)$.  
		Then,
		\begin{enumerate}
			\item There is a unique real number $x_\rho\in \br$ such that $\rho_u:=\rho(-x_\rho)$ is unitary. 
			\item Given a segment $\Delta$, by the previous result we may choose a unitary representative $\rho$ for its inertial support. 
			Writing $\Delta=[\rho(a), \rho(b)]$, and $\bar{\Delta}=[\rho((a-b)/2), \rho((b-a)/2)]$, we have
			$$Q(\Delta)\cong \nu^{m_\rho(\Delta)}\ten Q(\bar{\Delta}),$$
			where $Q(\bar{\Delta})$ is square-integrable. 
			\item $m_\rho(\bar{\Delta})=0$. 
		\end{enumerate}
	\end{lemma}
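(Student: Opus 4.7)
The plan is to dispatch part (1) via the structure theory of quasi-characters of $F^\times$ together with the fact that $\rho$ admits a central character, and then to derive parts (2) and (3) as direct computations using Lemma \ref{lem: twist_seg}.

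For (1), I would argue as follows. An irreducible supercuspidal $\rho$ of $\GL_m(F)$ possesses a central character $\omega_\rho : F^\times \to \bc^\times$. The group of smooth quasi-characters of $F^\times$ decomposes uniquely as the direct product $\{\norm{\cdot}_F^s\}_{s\in \br}\times\{\text{unitary characters}\}$, so we may write $\omega_\rho = \norm{\cdot}_F^{s}\omega_u$ for a unique real number $s$ and a unique unitary character $\omega_u$. Since the central character of $\rho(c) = \nu^c\rho$ is $\norm{\cdot}_F^{mc}\omega_\rho$, setting $x_\rho := s/m$ forces $\rho(-x_\rho)$ to have unitary central character, and the value of $x_\rho$ is uniquely determined. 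The fact that having unitary central character is equivalent to unitarity for an irreducible supercuspidal of $\GL_m(F)$ (the matrix coefficients are compactly supported modulo the center) then upgrades this to unitarity of $\rho_u$.

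For (2), by Lemma \ref{lem: twist_seg},
$$\nu^{m_\rho(\Delta)}\ten Q(\bar{\Delta}) \cong Q\lb [\nu^{m_\rho(\Delta)}\rho((a-b)/2),\, \nu^{m_\rho(\Delta)}\rho((b-a)/2)]\rb.$$
Using $m_\rho(\Delta) = (a+b)/2$, one computes $\nu^{m_\rho(\Delta)}\rho((a-b)/2) = \rho(a)$ and $\nu^{m_\rho(\Delta)}\rho((b-a)/2) = \rho(b)$, so the segment on the right-hand side is exactly $\Delta$, yielding the claimed isomorphism. Square-integrability of $Q(\bar{\Delta})$ follows immediately from the definition: the relevant midpoint representative is $\rho\lb \tfrac{(a-b)/2+(b-a)/2}{2}\rb = \rho(0) = \rho$, which is unitary by our initial choice. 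Part (3) is the same arithmetic, $m_\rho(\bar{\Delta}) = \tfrac{(a-b)/2 + (b-a)/2}{2} = 0$.

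The entire argument is essentially bookkeeping once (1) is in hand; the only subtle point is invoking the correct structure theorem for quasi-characters of $F^\times$ and the equivalence between unitarity and unitarity of the central character for supercuspidals of $\GL_m(F)$. This is where I expect the main (minor) friction to be, since everything else is a direct translation through Lemma \ref{lem: twist_seg}.
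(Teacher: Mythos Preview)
Your proposal is correct and follows essentially the same route as the paper's proof: for (1) both arguments reduce unitarity of $\rho(-x_\rho)$ to unitarity of its central character via the decomposition of quasi-characters of $F^\times$ (the paper phrases this through $F^\times\cong\langle\varpi\rangle\times\mc{O}_F^\times$ and cites \cite{Tad}*{Proposition 2.3} for the equivalence, where you invoke compact support of matrix coefficients modulo center), and (2)--(3) are in both cases immediate from Lemma~\ref{lem: twist_seg} and the midpoint arithmetic. Your treatment of the scaling factor $m$ in the central-character computation is in fact slightly more careful than the paper's.
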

	\begin{proof}
		\begin{enumerate}
			\item Let $\rho$ be a supercuspidal representation with central character $\omega$. 
			As topological groups 
			$$Z(\GL_n(F))\cong F^\times \cong \la \varpi\ra \times \mc{O}_F^\times.$$
			Thus we have a factorization $\omega\cong \nu^{x+iy}\boxtimes\chi$, for some $x, y\in \br$.    
			Then, the central character of $\rho(-x)$ is isomorphic to
			$$\nu^{-x}\nu^{x+iy}\chi=\nu^{iy}\chi.$$
			As $\mc{O}_F^\times$ is compact, $\chi$ is unitary, and since $\nu^{iy}$ is unitary, we conclude that $\rho(-x)$ has unitary central character, so we take $x_\rho:=x$.  
			By \cite{Tad}*{Proposition 2.3} a supercuspidal representation is unitary if and only if it has unitary central character,
			which concludes the result. 
			\item Given a segment $\Delta$, we can write $\Delta=[\rho(a), \rho(b)]$ for unitary $\rho$ by the result above.  
			Defining $\bar{\Delta}$ as in the statement, by Lemma \ref{lem: twist_seg}
			\begin{align*}
				Q([\rho(a), \rho(b)])&\cong \nu^{(a+b)/2}\ten Q([\rho((a-b)/2), \rho((b-a)/2)])\\
				&\cong \nu^{ m_\rho(\Delta)}\ten Q(\bar{\Delta}).
			\end{align*}
			Meanwhile,  
			$$\rho\lb \lb\frac{a-b}{2} + \frac{b-a}{2}\rb/2 \rb\cong \rho,$$
			is unitary, and therefore $Q(\bar{\Delta})$ is square-integrable. 
			\item Follows directly from 2). 
		\end{enumerate}
	\end{proof}
	
	Define the character 
	$$\chi_i:M_{\vec{n}}\to \bc^\times,$$
	by sending the block diagonal matrix $\tx{diag}(A_1, \ldots, A_r)$ to $|\det(A_i)|_F$.  
	Then, 
	\begin{align}
		\lset \frac{1}{n_1}\chi_1, \ldots, \frac{1}{n_r}\chi_r\rset,\label{eqn: basis}
	\end{align}
	is a basis for $\mf{a}_{P_{\vec{n}}}^\ast$.  
	Moreover, for each $\vec{n}$, and standard parabolic $P_{\vec{n}}$ we can write each element of $\mf{a}_P^\ast$ with respect to the basis of Equation \ref{eqn: basis} as $\vec{z}=(z_1, \ldots, z_r)$.   
	As a shorthand, we will write
	$$e^{\vec{z}}\cong \nu^{z_1}\boxtimes \cdots \boxtimes \nu^{z_r},$$ 
	for the character of the corresponding Levi determined by $(z_1, \ldots, z_r)$.  
	We also note that in this description, 
	$$\mf{a}_P^{\ast, +}:=\{\lambda \in\mf{a}_M^\ast | \forall \alpha\in \Delta_P, \alpha^\vee(\lambda)>0\}=\{(a_1, \ldots, a_r)\in \br^r | a_i> a_{i+1}\}.$$	
	
	As a shorthand, we will write $I_{(n_1, \ldots, n_r)}$ for the normalized parabolic induction $I_{P_{(n_1, \ldots, n_r)}}^{\GL_n(F)}$. 
	
	\begin{theorem}[\cite{GH}*{Theorem 8.4.5}]\label{thm: temp}
		Every irreducible tempered representation of $\GL_n(F)$ is of the form
		$$\tau_{\vec{n}}:=I_{(n_1, \ldots, n_r)}^n\lb \boxtimes_{i=1}^r Q(\Delta_i)\rb,$$
		where $\Delta_i=[\nu^{a_i}\rho_i, \nu^{b_i}\rho_i]$ with each $Q(\Delta_i)$ square-integrable.
	\end{theorem}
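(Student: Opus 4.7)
The plan is to combine two classical inputs: Harish-Chandra's theorem that tempered representations are parabolic inductions of discrete series from Levi subgroups, and the Bernstein--Zelevinsky classification of discrete series of $\GL_m(F)$ as the representations $Q(\Delta)$ attached to square-integrable segments.

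First, I would invoke Harish-Chandra's theorem: every irreducible tempered representation of a connected reductive $p$-adic group is a direct summand of $I_P^G(\sigma)$ for some standard parabolic $P=MU$ and some irreducible discrete series representation $\sigma$ of $M$. Since every Levi subgroup of $\GL_n(F)$ is conjugate to $M_{\vec n}\cong \prod_{i=1}^r \GL_{n_i}(F)$, and the discrete series of a direct product are tensor products of discrete series of the factors, one may write $\sigma\cong \boxtimes_{i=1}^r \sigma_i$ with each $\sigma_i$ an irreducible discrete series of $\GL_{n_i}(F)$.

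Next I would apply the Bernstein--Zelevinsky classification of discrete series of $\GL_{n_i}(F)$: each $\sigma_i$ is isomorphic to $Q(\Delta_i)$ for some segment $\Delta_i=[\nu^{a_i}\rho_i, \nu^{b_i}\rho_i]$ with $Q(\Delta_i)$ square-integrable in the sense of the preceding definition. Substituting into the previous step expresses every irreducible tempered representation as a direct summand of $I_{(n_1,\ldots,n_r)}^n\lb\boxtimes_{i=1}^r Q(\Delta_i)\rb$.

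The main obstacle is the remaining step: to promote ``direct summand of'' to ``equal to'' by showing that this induced representation is itself irreducible. This is a feature special to $\GL_n$, and I would establish it via the Zelevinsky irreducibility criterion, which asserts that $Q(\Delta_1)\times\cdots\times Q(\Delta_r)$ is irreducible if and only if no two of the segments are linked. For square-integrable $Q(\Delta_i)$, Lemma \ref{lem: sqr} allows us to choose unitary supercuspidal representatives $\rho_i$ with $m_{\rho_i}(\Delta_i)=0$. Segments with distinct inertial supports are automatically not linked, while two sharing inertial support then have equal midpoints and so, by Lemma \ref{lem: can ord}, are not linked either. This yields irreducibility and completes the classification.
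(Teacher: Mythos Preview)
The paper does not actually prove this theorem; it is simply quoted from \cite{GH}*{Theorem 8.4.5} as an external input, with no argument given. So there is no ``paper's own proof'' to compare against.

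That said, your sketch is correct and is essentially the standard route to this classification. The three ingredients you identify---Harish-Chandra's description of tempered representations as summands of parabolically induced discrete series, the Bernstein--Zelevinsky identification of discrete series of $\GL_m(F)$ with $Q(\Delta)$ for square-integrable $\Delta$, and the irreducibility of the induced representation---are exactly what is needed. Your use of Lemma~\ref{lem: sqr} and Lemma~\ref{lem: can ord} to verify the unlinkedness hypothesis (all midpoints equal to zero for the unitary representative) is clean and matches precisely how the paper itself applies these lemmas in the proof of Theorem~\ref{thm: equiv_seg_stand}. There is no circularity, since both lemmas precede the theorem in question.
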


	\begin{theorem}\label{thm: equiv_seg_stand}
		For every multisegment $a$ satisfying Definiton \ref{dnp}, $S_Q(a)$ is isomorphic to a standard representation $S_{Q(a)}$ as in Theorem \ref{thm: lc}. 
	\end{theorem}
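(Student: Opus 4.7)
The plan is to reorganize $S_Q(a)=Q(\Delta_1)\times\cdots\times Q(\Delta_r)$ into the shape $I_P^G(e^\lambda\ten\tau)$ required by Theorem \ref{thm: lc}. The first step is to apply Lemma \ref{lem: sqr} to each factor: choosing a unitary representative $\rho_i$ of the inertial support of $\Delta_i$ and absorbing any purely imaginary twist into $\rho_i$, one may write $\Delta_i=[\rho_i(a_i),\rho_i(b_i)]$ with $a_i,b_i\in\br$, so that $Q(\Delta_i)\cong \nu^{m_i}\ten Q(\bar\Delta_i)$ where $m_i=(a_i+b_i)/2\in\br$ and $Q(\bar\Delta_i)$ is square-integrable.

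The second step is to reorder the $\Delta_i$ so that $m_1\geq m_2\geq\cdots\geq m_r$ without changing the isomorphism class of $S_Q(a)$. The Does Not Precede condition, combined with Lemma \ref{lem: can ord}, already forces $m_i\geq m_j$ whenever $i<j$ and $\Delta_i,\Delta_j$ share an inertial support. Pairs of segments with distinct inertial support are never linked, and by Zelevinsky's irreducibility criterion \cite{ZelI2} their parabolic induction in either order yields the same irreducible representation. Pairs with the same inertial support and equal midpoint are also unlinked by Lemma \ref{lem: can ord}. Hence adjacent transpositions of such pairs suffice to sort by midpoint.

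After reordering, group the segments into maximal blocks $B_1,\ldots,B_s$ of consecutive indices sharing a common midpoint $\mu_k$, so that $\mu_1>\mu_2>\cdots>\mu_s$. Transitivity of parabolic induction yields
\[
S_Q(a)\cong I_{(N_1,\ldots,N_s)}^G\bigl(\tau_1\boxtimes\cdots\boxtimes\tau_s\bigr),\qquad \tau_k:=\underset{j\in B_k}{\times}Q(\Delta_j),
\]
and Lemma \ref{lem: twist_seg} lets us pull $\nu^{\mu_k}\circ\det$ out of each block to obtain $\tau_k\cong(\nu^{\mu_k}\circ\det)\ten\tau_k'$, where $\tau_k':=\underset{j\in B_k}{\times}Q(\bar\Delta_j)$ is a product of square-integrable representations attached to pairwise unlinked segments. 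By Theorem \ref{thm: temp} together with the irreducibility criterion just invoked, each $\tau_k'$ is irreducible and tempered. Rearranging the characters outside,
\[
S_Q(a)\cong I_{(N_1,\ldots,N_s)}^G\bigl(e^{\vec\mu}\ten(\tau_1'\boxtimes\cdots\boxtimes\tau_s')\bigr),
\]
with $\vec\mu=(\mu_1,\ldots,\mu_s)\in\mf{a}_{P_{(N_1,\ldots,N_s)}}^{\ast,+}$ by strict monotonicity, exhibiting $S_Q(a)$ as a standard representation in the sense of Definition \ref{def: lc}.

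The principal obstacle is the second step together with the temperedness verification in the third: both hinge on Zelevinsky's criterion that $Q(\Delta)\times Q(\Delta')$ is irreducible precisely when $\Delta,\Delta'$ are not linked, and on the classical consequence that any product of pairwise unlinked square-integrable representations of $\GL_n(F)$ is irreducible tempered. Carefully citing and applying these ingredients, rather than any computation in the decomposition, is where the bulk of the care lies.
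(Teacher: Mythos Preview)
Your approach is essentially the same as the paper's: normalize each segment via Lemma~\ref{lem: sqr}, group segments by real midpoint, pull the unramified twists outside, and recognize each block as an irreducible tempered representation so that the result is in Langlands form. The paper carries this out by directly relabelling $a=\{\Delta_{ij}\}$ so that all segments with midpoint $m_i$ carry first index $i$, verifying that this lexicographic order still satisfies Definition~\ref{dnp}, and then computing $S_Q(a)$ in that order. You instead argue that one may bubble-sort the given DNP ordering into midpoint order by swapping adjacent unlinked pairs, which is a more explicit justification of the same reordering.

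There is one genuine slip in your second step. You assert that DNP together with Lemma~\ref{lem: can ord} ``already forces $m_i\geq m_j$ whenever $i<j$ and $\Delta_i,\Delta_j$ share an inertial support.'' This is false: take $\Delta_1=[\rho(0),\rho(1)]$ and $\Delta_2=[\rho(5),\rho(6)]$. They share inertial support, are unlinked (their union is not a segment), so the order $(\Delta_1,\Delta_2)$ satisfies DNP, yet $m_1=\tfrac12<\tfrac{11}{2}=m_2$. Lemma~\ref{lem: can ord} only gives the implication in one direction: $m_i\geq m_j$ implies $\Delta_i$ does not precede $\Delta_j$, not conversely.

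The fix is immediate and fits your framework. At each stage of the sort, suppose the current ordering satisfies DNP (it does initially, and swapping adjacent unlinked segments preserves DNP). If $m_i<m_{i+1}$ and the two segments share inertial support, then they cannot be linked: DNP rules out $\Delta_i$ preceding $\Delta_{i+1}$, and if $\Delta_{i+1}$ preceded $\Delta_i$ the contrapositive of Lemma~\ref{lem: can ord} would give $m_{i+1}<m_i$, a contradiction. Hence every adjacent pair that needs swapping is unlinked, and Zelevinsky's criterion (\cite{ZelI2}, Theorem~4.2, which is what the paper also invokes) gives $Q(\Delta_i)\times Q(\Delta_{i+1})\cong Q(\Delta_{i+1})\times Q(\Delta_i)$. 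With this correction your argument goes through and matches the paper's proof.
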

	\begin{proof}
		Given a multisegment $a$, by Lemma \ref{lem: sqr}, for each $\Delta\in a$ there is a unique unitary representative $\rho_\Delta$ of its inertial support. 
		Let $m_1>m_2>\dots >m_r$ be the distinct midpoints $m_{\rho_\Delta}(\Delta)$ as $\Delta$ ranges over $a$.  
		Label $a=\{\Delta_{ij}\}$ such that
		the $\rho_{\Delta_{ij}}$-midpoint of $\Delta_{ij}$ is $m_i$.  
		Then, ordering the segments lexicographically by the indices satisfies Definition \ref{dnp}.  
		Indeed, if $(i,j) >(k, l)$ and $\Delta_{ij}, \Delta_{kl}$ have distinct inertial support, then neither precedes the other.  
		Thus, we may suppose they have the same inertial support, say with unitary representative $\rho$. 
		Therefore,
		$$m_\rho(\Delta_{ij})=m_i>m_k=m_\rho(\Delta_{ijk}),$$
		and thus by Lemma \ref{lem: can ord} $\Delta_{ij}$ does not precede $\Delta_{kl}$.  
		Hence the lexicographic ordering on $a=\{\Delta_{ij}\}$ satisfies Definition \ref{dnp}.  
		Let $n_{ij}$ be the sum of the length of $\Delta_{ij}$, define $n_i:=n_{i1}+\cdots + n_{is_i}$, and $\vec{n}=(n_1, \ldots, n_r)$. 
		Then,
		\begin{align*}
			S_Q(a)&\cong \bigtimes_{i=1}^r\bigtimes_{j=1}^{s_i}Q(\Delta_{ij})\\
			&\cong \bigtimes_{i=1}^r\bigtimes_{j=1}^{s_i}\nu^{m_i}\ten Q(\bar{\Delta}_{ij}) \\
			&\cong I_{\vec{n}}\lb (\boxtimes_{i=1}^r\nu^{m_i})\ten\lb\boxtimes_{i=1}^r\bigtimes_{j=1}^{s_i}Q(\bar{\Delta}_{ij})\rb\rb.
		\end{align*}
		Since $m_1>\cdots >m_r$, we see that 
		$$\vec{m}=(m_1, \ldots, m_r)\in \mf{a}_{P_{\vec{n}}}^{\ast, +},$$
		and $e^{\vec{m}}\cong \boxtimes_{i=1}^r\nu^{m_i}$.  
		Moreover, by Lemma \ref{lem: sqr} 3), for any $i,j,k$,
		$$m_\rho(\bar{\Delta}_{ij})=0=m_\rho(\bar{\Delta}_{ik}).$$
		Thus, for each $j,k$ the segments $\bar{\Delta}_{ij}, \bar{\Delta}_{ik}$ are not linked, hence by \cite{Zel}*{Theorem 4.2} $\bigtimes_{j=1}^{s_i}Q(\bar{\Delta}_{ij})$ is irreducible, and as each $Q(\Delta_{ij})$ is square-integrable, each
		$$\tau_i:=\boxtimes_{i=1}^r\bigtimes_{j=1}^{s_i}Q(\bar{\Delta}_{ij}),$$
		is an irreducible tempered representation of $\GL_{n_i}(F)$, and thus $\tau:=\boxtimes_{i=1}^r\tau_i$
		is an irreducible tempered representation of $M_{\vec{n}}$.  
		In other words, $(P_{\vec{n}}, e^{\vec{m}}, \tau)$ is a Langlands triple.  
		Therefore,
		$$S_Q(a)\cong I_{\vec{n}}\lb (\boxtimes_{i=1}^r\nu^{m_i})\ten\lb\boxtimes_{i=1}^r\bigtimes_{j=1}^{s_i}Q(\bar{\Delta}_{ij})\rb\rb\cong I_{\vec{n}}(e^{\vec{m}}\ten \tau),$$
		is a standard representation with unique irreducible quotient $Q(a)$, and thus $S_Q(a)\cong S_{Q(a)}$. 
	\end{proof}

	\subsection{Relating $S_Z(a)$ and $S_Q(a)$}
	
	In this section, we prove Proposition \ref{prop: zmult=qmult}, which states that 
	$$[S_Q(a):Q(b)]=[S_Z(a):Z(b)].$$
	The version of the $p$-adic Kazhdan-Lusztig hypothesis \cite{Zel}*{Hypothesis 1.9} involves the representations $S_Z(a)$, while the generalized version \cite{Vog}*{Conjecture 8.11} is phrased in terms of standard representations as defined by Definition \ref{def: lc}.  By Theorem \ref{thm: equiv_seg_stand}, the representations $S_Q(a)$ are exactly the standard representations of Definition \ref{def: lc}, and therefore Proposition \ref{prop: zmult=qmult} allows us to relate \cite{Zel}*{Hypothesis 1.9} and \cite{Vog}*{Conjecture 8.11}.

	Before proving the main results of this section, we recall some facts about the covariant Aubert-Zelevinsky duality functor $D$, which was originally introduced in \cite{ZelI2} and later generalized in \cite{Aub}. 
	In unpublished notes, Bernstein introduced a contravariant duality functor $D'$. 
	Letting $(-)^\vee$ be the functor sending a representation to its contragradient, the work of \cite{PN} proves that $D'$ is isomorphic to $D'=D\circ (-)^\vee$.  
	
	Suppose $a=\{\Delta_1, \ldots, \Delta_r\}$ is a multisegment (always assumed to follow Definition \ref{dnp}), let $n_i$ be the length of $\Delta_i$, and let $D_i$ be Aubert-duality on $\GL_{n_i}(F)$.  Define $\vec{n}^\vee:=(n_r, \ldots, n_1)$.
	Then, letting $s$ be the longest element of the Weyl group of $\GL_n(F)$,
	$$\bar{P}_{\vec{n}}=P_{\vec{n}^\vee}^s.$$
	Thus, we compute
	\begin{align*}
		D(S_Z(a))&=D \lb Z(\Delta_1)\times\cdots \times Z(\Delta_r)\rb\\
		&\cong D \circ I_{P_{\vec{n}}}\lb Z(\Delta_1)\boxtimes\cdots \boxtimes Z(\Delta_r)\rb\\
		&\cong I_{\bar{P}_{\vec{n}}}^G\lb D_1Z(\Delta_1))\boxtimes\cdots \boxtimes D_r(Z(\Delta_r))\rb\\
		&\cong I_{P_{\vec{n}^\vee}^s}^G\lb Q(\Delta_1)\boxtimes\cdots \boxtimes Q(\Delta_r)\rb\\
		&\cong I_{P_{\vec{n}^\vee}}\lb (Q(\Delta_1)\boxtimes\cdots \boxtimes Q(\Delta_r))^{s^{-1}}\rb\\
		&\cong I_{P_{\vec{n}^\vee}}\lb Q(\Delta_r)\boxtimes\cdots \boxtimes Q(\Delta_1)\rb.
	\end{align*}
	By \cite{ZelI2}*{Theorem 1.9} "$\times$" defines a commutative product on the Grothendieck group $K\Rep(\GL_n(F))$ of smooth representations, and thus, writing $[\pi]$ for the class of a representation in the Grothendieck group
	\begin{align*}	
		[D(S_Z(a))]&=[I_{P_{\vec{n}^\vee}}\lb Q(\Delta_r)\boxtimes\cdots \boxtimes Q(\Delta_1)\rb]\\
		&= [Q(\Delta_r)\times\cdots \times Q(\Delta_1)]\\
		&= [Q(\Delta_1)\times\cdots\times  Q(\Delta_r)]\\
		&=[S_Q(a)]. 
	\end{align*}

	By \cite{Rod}*{Theorem 7} $D(Z(a))\cong Q(a)$, and therefore $D$ induces $\bz$-linear isomorphism of Grothendieck groups, 
	\begin{align*}
		K\Rep(G)&\xrightarrow{D} K\Rep(G)\\
		[Z(a)]&\mapsto [Q(a)]\\
		[S_Z(a)]&\to [S_Q(a)]
	\end{align*}
	
	Let $m_Z(b;a)$ denote the multiplicity of $Z(b)$ in $S_Z(a)$ and $m_Q(b;a)$ denote the multiplicity of $Q(b)$ in $S_Q(a)$.  
	
	\begin{proposition}\label{prop: zmult=qmult}
		For any multisegments $a, b$ we have $m_Q(b; a)=m_Z(b;a)$. 
	\end{proposition}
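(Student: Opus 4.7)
The essential machinery has already been assembled in the discussion preceding the statement, and the proof is almost a direct bookkeeping exercise. The plan is to exploit the fact that Aubert--Zelevinsky duality $D$ descends to a $\bz$-linear automorphism of the Grothendieck group $K\Rep(\GL_n(F))$ that sends the class of an irreducible $[Z(b)]$ to the class of the irreducible $[Q(b)]$, while simultaneously sending $[S_Z(a)]$ to $[S_Q(a)]$.

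First I would note that by \cite{ZelI2}, every irreducible smooth representation of $\GL_n(F)$ is isomorphic to $Z(b)$ for a unique multisegment $b$ satisfying Definition \ref{dnp}, so the classes $\{[Z(b)]\}_b$ form a $\bz$-basis of $K\Rep(\GL_n(F))$. Likewise, by \cite{Rod}*{Theorem 3} the classes $\{[Q(b)]\}_b$ form a $\bz$-basis. Writing the Jordan--H\"older decomposition in the Grothendieck group gives
\begin{equation*}
[S_Z(a)] = \sum_b m_Z(b;a)\,[Z(b)], \qquad [S_Q(a)] = \sum_b m_Q(b;a)\,[Q(b)].
\end{equation*}

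Next I would apply the map $D$ induced by Aubert--Zelevinsky duality to the first equation. The calculations already carried out above in the excerpt show that $[D(S_Z(a))] = [S_Q(a)]$, and by \cite{Rod}*{Theorem 7} we have $[D(Z(b))] = [Q(b)]$ for every multisegment $b$. Since $D$ is $\bz$-linear on the Grothendieck group, applying it term-by-term yields
\begin{equation*}
[S_Q(a)] \;=\; [D(S_Z(a))] \;=\; \sum_b m_Z(b;a)\,[D(Z(b))] \;=\; \sum_b m_Z(b;a)\,[Q(b)].
\end{equation*}

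Finally, comparing this with the decomposition $[S_Q(a)] = \sum_b m_Q(b;a)\,[Q(b)]$ and invoking the linear independence of $\{[Q(b)]\}_b$ gives $m_Q(b;a) = m_Z(b;a)$ for every $b$. There is no real obstacle; the only point requiring care is ensuring that $D$ is genuinely well defined as an endomorphism of the Grothendieck group and that both families $\{[Z(b)]\}$ and $\{[Q(b)]\}$ are $\bz$-bases, but both facts are already in hand from the cited results of Zelevinsky and Rodier.
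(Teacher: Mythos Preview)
Your proof is correct and follows essentially the same approach as the paper: apply the Aubert--Zelevinsky duality $D$ to the decomposition $[S_Z(a)]=\sum_b m_Z(b;a)[Z(b)]$, use that $D$ sends $[S_Z(a)]\mapsto[S_Q(a)]$ and $[Z(b)]\mapsto[Q(b)]$, and conclude by comparing coefficients in the basis $\{[Q(b)]\}$.
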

	\begin{proof}
		Applying $D$ to 
		$$[S_Z(a)]=\sum_{b\leq a}m_Z(b;a)[Z(b)],$$
		yields
		\begin{align*}
			[S_Q(a)]&=\sum_{b\leq a}m_Z(b;a)[Q(a)],
		\end{align*}
		but since 
		$$[S_Q(a)]=\sum_{b\leq a}m_Q(b;a)[Q(a)],$$
		and the $[Q(b)]$ are a basis for $K\Rep(\GL_n(F))$, we conclude that
		$$[S_Q(a):Q(b)]=m_Q(b;a)=m_Z(b;a)=[S_Z(a):Z(b)].$$
	\end{proof}
	
	Thus, throughout the rest of the article, we will simply write $m(b;a)$ for $m_Z(b;a)=m_Q(b;a)$.


	\subsection{Reduction to multisegments of simple inertial support}
	
	The main result of this section is Proposition \ref{prop: prodmult}, which allows one to decompose general multisegments in terms of \emph{simple} multisegments (Definition \ref{def: simp}), and will be an essential step in proving the general case of the $p$-adic Kazhdan-Lusztig hypothesis. 
	First, we require a technical result.

	\begin{lemma}\label{lem: multirays}
		Let $a_1, \ldots, a_r$ be multisegments such that if $\Delta\in a_i, \Delta'\in a_j$ are linked, then $i=j$.  
		Then, the multisegments $b\leq a$ are precisely $b=b_1+\cdots +b_r$ where $b_i\leq a_i$.  
		Moreover, for each such $b$, if $\Delta\in b_i, \Delta'\in b_j$ are linked, then $i=j$. 
	\end{lemma}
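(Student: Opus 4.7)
The plan is to work directly from the combinatorial definition of the Zelevinsky order: $b \leq a$ holds iff $b$ can be obtained from $a$ by a finite sequence of \emph{elementary operations}, each of which replaces a linked pair $\{\Delta,\Delta'\}$ by $\{\Delta\cap\Delta',\Delta\cup\Delta'\}$ (with $\Delta\cap\Delta'$ dropped when it is empty). I will prove the two inclusions, together with the moreover clause, in parallel.

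For the easy direction $(\Leftarrow)$, if $b = b_1 + \cdots + b_r$ with $b_i \leq a_i$ for each $i$, then concatenating operation sequences realizing each $b_i \leq a_i$ produces a single sequence from $a$ to $b$, so $b \leq a$; the no-cross-linking condition is transparent in this direction because no operation has mixed segments from distinct $a_i$.

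For the harder direction $(\Rightarrow)$, I induct on the minimal number $N$ of elementary operations needed to reach $b$ from $a$. The case $N=0$ is immediate with $b_i = a_i$. For the inductive step, let $c$ be the intermediate multisegment one step before $b$; by induction $c = c_1 + \cdots + c_r$ with $c_i \leq a_i$, and $c$ has the no-cross-linking property. The final operation acts on a linked pair in $c$; by the no-cross-linking property for $c$, this pair lies entirely in some single $c_i$. Applying the operation inside $c_i$ produces $b_i \leq a_i$ and leaves the other components unchanged, so $b = c_1 + \cdots + b_i + \cdots + c_r$ as required.

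The main obstacle is preserving the no-cross-linking invariant after each elementary operation. This reduces to the following combinatorial fact about segments: if $\Delta_1,\Delta_2$ are linked and $\tilde\Delta$ is a segment not linked to either, then $\tilde\Delta$ is not linked to $\Delta_1 \cup \Delta_2$, nor (when nonempty) to $\Delta_1 \cap \Delta_2$. When $\tilde\Delta$ has inertial support distinct from that of $\Delta_1$ this is automatic, so I reduce to the case of common inertial support, parameterize the three segments by the real parts of their endpoints, and appeal to the numerical criterion (1)--(3) for linking recorded earlier in the section. A short case analysis on the position of $\tilde\Delta$ relative to $\Delta_1 \cup \Delta_2$ then shows that any putative linking of $\tilde\Delta$ with the union or with the intersection---whether juxtaposition or proper non-contained overlap---would force a linking with either $\Delta_1$ or $\Delta_2$, contradicting the hypothesis. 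Feeding this fact into the inductive step preserves the no-cross-linking condition and yields the desired decomposition.
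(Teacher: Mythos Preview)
Your proposal is correct and follows essentially the same approach as the paper: induction on the number of elementary operations, with the decomposition and the no-cross-linking invariant proved in tandem, and the inductive step reduced to the combinatorial fact that a segment unlinked to both $\Delta_1$ and $\Delta_2$ remains unlinked to $\Delta_1\cup\Delta_2$ and to $\Delta_1\cap\Delta_2$. The paper's case analysis (its Cases 1--3) is exactly your reduction, carried out explicitly.

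One small point: your remark that the no-cross-linking condition is ``transparent'' in the easy direction is not quite right as stated. The fact that each operation stays within a single $a_i$ does not by itself prevent a newly created segment in $b_i$ from becoming linked to something in $b_j$; that still requires the combinatorial fact you prove later. What \emph{is} transparent in the easy direction is only the inequality $b\leq a$, not the moreover clause. Since you do supply the needed combinatorial lemma afterwards, the argument is complete; just tighten the wording so it is clear that the invariant-preservation step is required for both directions.
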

	{\allowdisplaybreaks
		\begin{proof}
			First, for any $1\leq k \leq r$, let $b_k$ be obtained by a single simple operation on $\Delta, \Delta'\in a_k$.  
			Then $\Delta, \Delta'\in a$, and
			$$b=a_1+\cdots + a_{k-1}+b_k+a_{k+1}+\cdots + a_r,$$
			is obtained by the simple operation on $\Delta, \Delta'$. 
			By induction, it follows that choosing any $b_i\leq a_i$ for each $i$, we have 
			$$(b_1+\cdots + b_r)\leq (a_1+\cdots + a_r).$$
			Note that this direction did not actually require the condition in the lemma statement about the segments being (not) linked. 
			
			Now, suppose that $b$ is obtained by a simple operation from $a$, on linked segments $\Delta$ and $\Delta'$.  
			By assumption, there is some $k$ for which $\Delta, \Delta'\in a_k$.    
			Therefore, letting $b_k$ be the multisegment obtained from $a_k$ from the simple operation on $\Delta$ and $\Delta'$, we find that 
			$$b=a_1+\cdots +a_{k-1}+b_k+a_{k+1}+\cdots + a_r.$$
			
			It follows that for any segment $b\leq a$, there exists $b_1\leq a_1, \ldots, b_s\leq a_s$ such that 
			$$b=b_1+\cdots + b_s.$$
			This concludes the first part of the statement.  
			
			As with the previous statement it suffices to prove the result when $b$ is obtained from $a$ from a single multisegment operation, the general case then follows by induction. 
			We know from the above that for some $i\in\{1, \ldots, r\}$,
			$$b=b_1+\cdots + b_r,$$
			where $b_i=a_i$ for $i\neq k$, and $b_k$ is obtained from a single simple operation on $a_k$, say on segments $\Delta_1=[x, y], \Delta_2=[z, w]\in a_k$. 
			
			We wish to show that if $\Delta=[u, v], \Delta'=[r, s]\in b$ are linked, then they belong to the same $b_j$.  
			
			If neither of $\Delta, \Delta'$ are in $b_k$, then for some $i, j\neq k, \Delta\in b_i=a_i, \Delta'\in b_j=a_j$, and thus by our assumption on $a$, $i=j$, hence $\Delta, \Delta'\in b_i$.  
			If both $\Delta$ and $\Delta'$ belong to $b_k$, there is nothing to show.
			Thus it remains to show by contradiction that it can not be that one, say $\Delta$, belongs to $\Delta\in b_k$, while $\Delta'\in b_i=a_i$ for $i\neq k$. 
			
			There are 3 cases: $\Delta\in a_k\backslash\{\Delta_1, \Delta_2\}$, or $\Delta=\Delta_1\cup\Delta_2$, or $\Delta=\Delta_1\cap \Delta_2$.
			{\allowdisplaybreaks
				\begin{enumerate}
					\item If $\Delta\in a_k\backslash\{\Delta_1, \Delta_2\}$ then $\Delta\in a_k$ and since $\Delta'\in b_j=a_j$ by the assumption on $a$ it must be that $j=k$, contrary to our assumptions.  
					\item Assume without loss of generality that $\Delta_1$ precedes $\Delta_2$. 
					If $[u, v]=\Delta=\Delta_1\cup\Delta_2=[x, w]$ is linked with $\Delta'=[r, s]$, then,
					($r<x$ and $x-1\leq s<w$) or ($w<s$ and $x<r\leq w+1$).
					We will assume the former, the case of the latter being entirely similar.  
					
					If $x<y$, then $\Delta'=[r, s]\in b_j, a_j$ and $\Delta_1=[x, y]\in a_k$ are linked, implying that $j=k$, contrary to our assumptions.  
					
					If $y\leq s$, then, since $\Delta_1=[x, y], \Delta_2[z, w]$ are linked, and $\Delta_1$ precedes $\Delta_2$, we must have $z<y\leq s<w$.  
					Therefore $\Delta'$ is linked with $\Delta_2$, which is again a contradiction. 

					\item If $\Delta=\Delta_1\cap \Delta_2$, the argument is entirely similar.  
			\end{enumerate}}
	\end{proof}}
	
	{\allowdisplaybreaks
		\begin{proposition}\label{prop: prodmult}
			Consider a segment $a=a_1+\cdots + a_r$ such that if $\Delta\in a_i, \Delta'\in a_j$ are linked, then $i=j$.  
			For any segments $b_i\leq a_i$,
			$$m(b_1+\cdots +b_r;a)=m(a_1; b_1) \cdots m(a_r; b_r).$$
	\end{proposition}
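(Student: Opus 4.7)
The plan is to work throughout with the Zelevinsky sub-standards $S_Z$ and the irreducibles $Z$ rather than $S_Q, Q$, which is justified by Proposition \ref{prop: zmult=qmult} since $m(b;a)=m_Z(b;a)=m_Q(b;a)$. The essential tool is that parabolic induction makes the Grothendieck group $K\Rep(\GL_n(F))$ into a commutative ring by \cite{ZelI2}*{Theorem 1.9}. Since $S_Z(a)\cong S_Z(a_1)\times\cdots\times S_Z(a_r)$ essentially by definition, in the Grothendieck group one has
$$[S_Z(a)]\;=\;\prod_{i=1}^r[S_Z(a_i)]\;=\;\sum_{(b_1,\ldots,b_r)}\Bigl(\prod_{i=1}^r m(b_i;a_i)\Bigr)\,[Z(b_1)\times\cdots\times Z(b_r)],$$
where the sum ranges over all tuples with $b_i\leq a_i$.

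The central step will be to identify
$$Z(b_1)\times\cdots\times Z(b_r)\;\cong\; Z(b_1+\cdots+b_r).$$
By the second half of Lemma \ref{lem: multirays}, the non-linking hypothesis on $(a_1,\ldots,a_r)$ descends to $(b_1,\ldots,b_r)$: no segment of $b_i$ is linked with any segment of $b_j$ for $i\neq j$. I would then invoke Zelevinsky's irreducibility criterion to conclude that the product $Z(b_1)\times\cdots\times Z(b_r)$ is irreducible. To pin down its isomorphism class, I use that $Z(b_i)\hookrightarrow S_Z(b_i)$ for each $i$, so the product embeds in $S_Z(b_1)\times\cdots\times S_Z(b_r)\cong S_Z(b_1+\cdots+b_r)$; being irreducible, it must coincide with the unique irreducible subrepresentation $Z(b_1+\cdots+b_r)$.

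Finally, by the first half of Lemma \ref{lem: multirays}, every $b\leq a$ is of the form $b=b_1+\cdots+b_r$ with $b_i\leq a_i$, and a short induction on the length of a chain of simple operations taking $a$ to $b$---where the non-linking hypothesis forces each operation to be internal to a single $a_i$---shows that this decomposition is unique (and at worst, that any two valid decompositions yield the same product $\prod_i m(b_i;a_i)$ by a cuspidal-support argument). Substituting the isomorphism from the previous step and grouping,
$$[S_Z(a)]\;=\;\sum_{b\leq a}\Bigl(\prod_{i=1}^r m(b_i;a_i)\Bigr)[Z(b)],$$
and comparing coefficients with the expansion $[S_Z(a)]=\sum_b m(b;a)[Z(b)]$ yields the proposition.

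The hardest part will be the identification $Z(b_1)\times\cdots\times Z(b_r)\cong Z(b_1+\cdots+b_r)$, specifically the irreducibility claim. Zelevinsky's criterion is cleanest for products of $Z$'s (or $Q$'s) of individual segments, so to promote it to products of $Z$'s of multisegments I expect one must realize each $Z(b_i)$ inside $Z(\Delta_{i,1})\times\cdots\times Z(\Delta_{i,s_i})$ for a DNP ordering of $b_i$, and then bootstrap irreducibility along a DNP ordering of $b_1+\cdots+b_r$, invoking the non-linking hypothesis repeatedly to ensure that no linked pair crosses the boundary between different $b_i$.
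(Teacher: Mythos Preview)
Your approach is essentially the same as the paper's: expand $[S_Z(a)]=\prod_i[S_Z(a_i)]$ in the Grothendieck ring, identify each $Z(b_1)\times\cdots\times Z(b_r)$ with $Z(b_1+\cdots+b_r)$ using the non-linking hypothesis (which descends to the $b_i$ by Lemma~\ref{lem: multirays}), and compare coefficients against the basis $\{[Z(b)]\}$.

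The only substantive difference is in how the identification $Z(b_1)\times\cdots\times Z(b_r)\cong Z(b_1+\cdots+b_r)$ is obtained. You attempt to prove it by hand---arguing irreducibility via a bootstrapped Zelevinsky criterion and then pinning down the isomorphism class by embedding into $S_Z(b_1+\cdots+b_r)$---and you correctly flag this as the hardest step. The paper instead simply invokes \cite{ZelI2}*{Proposition 8.5}, which is exactly this statement: if no segment of $b_i$ is linked with any segment of $b_j$ for $i\neq j$, then $Z(b_1+\cdots+b_r)\cong Z(b_1)\times\cdots\times Z(b_r)$. So what you identify as requiring the most work is already a recorded result of Zelevinsky, and citing it collapses your last paragraph entirely. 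Your embedding argument is a reasonable sketch of how one might reprove that proposition, but there is no need.
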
}
	{\allowdisplaybreaks
		\begin{proof}
			By \cite{ZelI2}*{Proposition 8.5}, if $a_1, \ldots, a_r$ are multisegments such that $\Delta\in a_i$ and $\Delta'\in a_j$ being linked implies $i=j$, then for $a=a_1+\cdots + a_r$,
			$$Z(a)=Z(a_1+\cdots +a_r)\cong Z(a_1)\times \cdots \times Z(a_r).$$
			Labeling the segments $a_i=\{\Delta_{i1}, \ldots, \Delta_{i,s_i}\}$, we see that in the Grothendieck group 
			{\allowdisplaybreaks
				\begin{align*}
					\sum_{b\leq a} m(b;a)[Z(b)]&=[S_Z(a)] \\
					&=\left[\bigtimes_{i=1}^r\bigtimes_{j=1}^{s_i}Z(\Delta_{ij})\right]\\
					&=\bigtimes_{i=1}^r\left[\bigtimes_{j=1}^{s_i}Z(\Delta_{ij})\right]\\
					&=\bigtimes_{i=1}^r[S_Z(a_i)]\\
					&=\bigtimes_{i=1}^r\sum_{\substack{(b_1, \ldots, b_r) \\ b_i\leq a_i}}m(b_i; a_i)[Z(b_i)]\\
					&=\sum_{\substack{(b_1, \ldots, b_r) \\ b_i\leq a_i}} m(a_1; b_1)\cdots  m(a_r; b_r) [Z(b_1)\times\cdots \times Z(b_r)].  
			\end{align*}}
			
			By Lemma \ref{lem: multirays}, we know that for one of the $(b_1, \ldots, b_r)$ where each $b_i\leq a_i$, if there existed some $\Delta \in b_i, \Delta' \in b_j$ which were linked, it must be that $i=j$.  
			Thus, by \cite{ZelI2}*{Proposition 8.5} 
			$$Z(b_1)\times\cdots \times Z(b_r)\cong Z(b_1+\cdots + b_r).$$
			Therefore, the above becomes,
			\begin{align*}
				\sum_{b\leq a} m(b;a)[Z(b)]
				&=\sum_{\substack{(b_1, \ldots, b_r) \\ \forall i, b_i\leq a_i}} m(a_1; b_1)\cdots  m(a_r; b_r) [Z(b_1)\times\cdots \times Z(b_r)]\\
				&=\sum_{\substack{(b_1, \ldots, b_r)\\
						\forall i, b_i\leq a_i}} m(a_1; b_1) \cdots  m(a_r; b_1)[Z(b_1+\cdots + b_r)].
			\end{align*}
			By Lemma \ref{lem: multirays} we know each $b\leq a$ is of the form $(b_1, \ldots, b_r)$ for $b_i\leq a_i$, and thus re-writing the sum on the left,
			\begin{align*}
				\sum_{\substack{(b_1, \ldots, b_r)\\ \forall i, b_i\leq a_i}}m(b_1+\cdots +b_r; a)[Z(b_1+\cdots+b_r)]&=\sum_{\substack{(b_1, \ldots, b_r)\\
						\forall i, b_i\leq a_i}} m(a_1; b_1) \cdots  m(a_r; b_1)[Z(b_1+\cdots + b_r)],
			\end{align*}
			and since the $Z(b_1+\cdots +b_r)$ are basis elements in the Grothendieck group, and each such element appears exactly once on each side of the above equality, we conclude that for all choices of $b_i\leq a_i$,
			$$m(b_1+\cdots +b_r; a_1+\cdots + a_r)=m(a_1; b_1)\cdots m(a_r; b_r). $$
	\end{proof}}


	\subsection{Reduction to simple inertial support}

	The main result of this section is Theorem \ref{thm: simpunramseg}, which is a key step in an essential result for this article being Corollary \ref{cor: unramseg}: for multisegments $a$ and $b$, the value $m(a;b)$ does not depend on the inertial supports. 
	Undoubtedly, Corollary \ref{cor: unramseg} is known to experts, but it appears the proof is only written down in the pre-print \cite{PyvPre}, and did not appear in the published version \cite{Pyv}.  
	Thus, for the sake of the completeness of the published literature, we offer another argument here.

	\begin{definition}
		Given an open compact subgroup $J$ of $G$, and an irreducible representation $(W, \tau)$ in $\Rep(J)$, we define $\mc{H}(G, \tau)$ to be the algebra of compactly supported functions $\psi:G\to \End_G(W^\vee)$, such that for all $k_1, k_2\in J$, and $g\in G$, 
		$$\psi(k_1gk_2)=\tau^\vee(k_1)\circ \psi(g) \circ \tau^\vee(k_2),$$
		and whose product is given by convolution
		$$(\varphi\ast \psi) (x)=\int_G\varphi(g)\psi(g^{-1}x)dg.$$
	\end{definition}
	
	Given $\psi\in \mc{H}(G, \tau)$, we define
	$$\check{\psi}(g):=\psi(g^{-1})^\vee.$$
	
	Then, $\check{\psi}\in \mc{H}(G, \tau^\vee)$, and the map
	\begin{align*}
		\mc{H}(G, \tau)&\to \mc{H}(G, \tau^\vee) \\
		\psi&\mapsto \check{\psi}
	\end{align*}
	defines an anti-isomorphism.  
	
	A case of particular importance for us will be the pair $(\mc{I}, 1_{\mc{I}})$ where $\mc{I}$ is that Iwahori subgroup of $\GL_n(F)$, and $1_{\mc{I}}$ the trivial representation of $\mc{I}$.  
	In this case, we call $\mc{H}(G, 1_{\mc{I}})$ the \emph{Iwahori-Hecke algebra}.
	Since $1_{\mc{I}}^\vee\cong 1_{\mc{I}}$ we also have an isomorphism $\mc{H}(G, 1_{\mc{I}})\cong \mc{H}(G, 1_{\mc{I}}^\vee)$.  
	
	Given a collection of pairs $(J_i, \tau_i)$ where $J_i$ is a compact subgroup of $\GL_{n_i}(F)$, $\prod_{i=1}^r J_i$ is a subgroup of 
	$$M_{n_1, \ldots, n_r}=\GL_{n_1}(F)\times \cdots \times \GL_{n_r}(F).$$
	Every smooth function $f:G_{n_1, \ldots, n_r}\to \bc$ can be factored as a product of functions on the individual factors, and thus we obtain an isomorphism by multiplication 
	$$\mc{H}(\GL_{n_1}(F), \tau_1)\ten_\bc\cdots \ten_\bc \mc{H}(\GL_{n_r}(F), \tau_r)\to \mc{H}(M, \boxtimes_{i=1}^r\tau_i).$$
	
	Recall that every smooth irreducible representation $\pi$ of $G$, there exist a supercuspidal $\sigma$ of a Levi subgroup $M$, and a parabolic $P\supset M$ such that $\pi$ is a subquotient of $I_{M}^G(\sigma)$.  
	The pair $(M, \sigma)$ is defined up to $G$-conjugation, and the equivalence class $(M, \sigma)_G$ under conjugation is called the \emph{cuspidal support} of $\pi$. 
	
	\begin{definition}\label{def: simp}
		We call a cuspidal support of the form 
		$$(M_{dn_1, \ldots, dn_r}(F), \nu^{a_1}\rho\boxtimes \cdots \boxtimes\nu^{a_r}\rho),$$
		\emph{simple}.  
		Likewise, if $a=\{\Delta_1, \ldots, \Delta_r\}$ is a multisegment for which there exists a supercusidal representations $\rho$ and complex numbers $a_i, b_i\in \bc$ such that $\Delta_i=[\rho(a_i), \rho(b_i)]$, then we say the inertial support of $a$ is simple. 
	\end{definition}
	
	Since a simple cuspidal support is defined up to conjugation, we may assume that for each $i\in\{1, \ldots, r\}$ we have $\Re(a_i)\geq \Re(a_{i+1})$.  
	Thus the multisegment
	$$a:=\{[\rho(a_1), \rho(a_1)], \ldots, [\rho(a_r), \rho(a_r)]\},$$
	satisfies Definition \ref{dnp}.  
	Therefore, if an irreducible representation $\pi$ has cuspidal support $(M_{dn_1, \ldots, dn_r}, \nu^{a_1}\rho\boxtimes \cdots \boxtimes\nu^{a_r}\rho),$ there must exist a multisegment $b\leq a$ such that $\pi\cong Q(b)$.

	Thus far, our notation for the norm character $\nu:\GL_1(F)\to \bc^\times$, and segments $\Delta=[c, d]=[\nu^c, \nu^d]$ with trivial inertial support has suppressed the underlying field $F$ as it should always be understood from context.
	We will see that ultimately, the multiplicities do not depend on the base field in Corollary \ref{cor: unramseg}, but in order to rigorously explain why this is so, we must use notation which highlights the base field.  
	Therefore, for the purposes of the following theorem, and some results in later sections, for a $p$-adic field $F$, we will write $\nu_F:\GL_1(F)\to\bc^\times$ for the norm-character. 
	
	{\allowdisplaybreaks
		\begin{theorem}\label{thm: simpunramseg}
			Given multisegments $a, b$ with the the same simple inertial support, represented by a supercuspidal representation $\rho$ of $\GL_d(F)$, we can write
			\begin{align*}
				a&=\{[\nu_F^{a_i}\rho, \nu_F^{b_i}\rho]\}_{i=1}^r,\\
				b&=\{[\nu_F^{c_i}\rho, \nu_F^{d_i}\rho]\}_{i=1}^s,
			\end{align*}
			of $\GL_{nd}(F)$,
			there exists a finite extension $E/F$ such that for the multisegments
			\begin{align*}
				a^\circ&=\{[\nu^{a_i}_E,\nu^{b_i}_E]\}_{i=1}^r\\
				b^\circ&=\{[\nu^{c_i}_E, \nu^{d_i}_E]\}_{i=1}^s,
			\end{align*}
			of $\GL_n(E)$, such that
			$$m(a;b)=m(a^\circ; b^\circ).$$
	\end{theorem}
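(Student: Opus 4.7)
The plan is to invoke the Bushnell--Kutzko theory of simple types to transport the multiplicity question from the Bernstein block on $\GL_{nd}(F)$ determined by the inertial class of $\rho^{\boxtimes n}$ over to the Iwahori (unramified principal series) block on $\GL_n(E)$ for an appropriate finite extension $E/F$, then verify that under this transport, the representations $S_Q(a), Q(b)$ correspond to $S_Q(a^\circ), Q(b^\circ)$.

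\textbf{Step 1: Simple type and Bernstein block.} By Bushnell--Kutzko, the supercuspidal $\rho$ of $\GL_d(F)$ determines a maximal simple type, and from it one constructs a simple type $(J,\tau)$ in $\GL_{nd}(F)$ which is a $[M,\sigma]_{\GL_{nd}(F)}$-type in the sense of \cite{BK}, where $\sigma$ is the $n$-fold external tensor product of $\rho$ on the Levi $M\cong \GL_d(F)^n$. The functor $V\mapsto \Hom_J(\tau,V)$ induces an equivalence between the Bernstein component $\Rep_{[M,\sigma]}(\GL_{nd}(F))$ and the category of (right) $\mc{H}(\GL_{nd}(F),\tau)$-modules. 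Since equivalences preserve composition series and multiplicities, it will suffice to match things on the Hecke-module side.

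\textbf{Step 2: Hecke algebra identification.} By the Bushnell--Kutzko theorem on simple types, there is an explicit finite extension $E/F$ (the one attached to the simple stratum defining $\tau$) together with a $\bc$-algebra isomorphism
\[
\Phi:\mc{H}(\GL_{nd}(F),\tau)\;\xrightarrow{\sim}\;\mc{H}(\GL_n(E),1_{\mc{I}}),
\]
where $\mc{I}$ is the standard Iwahori subgroup of $\GL_n(E)$. The right-hand side is the Iwahori--Hecke algebra, which governs the Bernstein block of $\GL_n(E)$ containing the unramified principal series -- i.e., precisely the representations whose cuspidal support consists of unramified characters of a split torus. By Borel--Casselman, this second block is equivalent to $\mc{H}(\GL_n(E),1_{\mc{I}})\tx{-mod}$ via $V\mapsto V^{\mc{I}}$. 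Composing with $\Phi$ yields an equivalence $\Psi$ between our two Bernstein blocks.

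\textbf{Step 3: Compatibility with parabolic induction and matching of segments.} The central compatibility we need is that $\Psi$ intertwines parabolic induction from Levi blocks on both sides, up to the explicit identification of Levis (on the $F$-side the Levi $\GL_{dn_1}(F)\times\cdots\times\GL_{dn_s}(F)$ restricted to representations with cuspidal support in twists of $\rho$ corresponds to the Levi $\GL_{n_1}(E)\times\cdots\times\GL_{n_s}(E)$ on the $E$-side restricted to the Iwahori block). This ``support-preserving'' property, in the sense of Bushnell--Kutzko (and refined by Bushnell--Henniart and S\'echerre--Stevens), guarantees that a segment $[\nu_F^{a_i}\rho,\nu_F^{b_i}\rho]$ on the $F$-side -- i.e.\ an induction from a block-diagonal Levi of representations $\nu_F^{a_i}\rho,\nu_F^{a_i+1}\rho,\ldots,\nu_F^{b_i}\rho$ -- is carried by $\Psi$ precisely to the segment $[\nu_E^{a_i},\nu_E^{b_i}]$ on the $E$-side (the Hecke-algebra isomorphism sends the $\nu_F^{\,\cdot\,}\rho$-twist structure of the cuspidal support to the $\nu_E^{\,\cdot\,}$-twist structure, and it is this compatibility that explains why the \emph{same} complex exponents $a_i,b_i$ appear on both sides). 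In particular $\Psi(S_Q(a))\cong S_Q(a^\circ)$ and the unique irreducible quotient is sent to the unique irreducible quotient, so $\Psi(Q(b))\cong Q(b^\circ)$.

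\textbf{Step 4: Conclusion.} Because $\Psi$ is an equivalence of abelian categories, it preserves Jordan--H\"older multiplicities, giving
\[
m(a;b)=[S_Q(a):Q(b)]=[\Psi(S_Q(a)):\Psi(Q(b))]=[S_Q(a^\circ):Q(b^\circ)]=m(a^\circ;b^\circ).
\]

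\textbf{Main obstacle.} Step 1 and the pure existence of the Hecke algebra isomorphism in Step 2 are standard, and Step 4 is formal once the matching is established. The real work is Step 3: one must verify that the Bushnell--Kutzko isomorphism $\Phi$ is compatible with parabolic induction in a way that sends the $\nu_F$-twist structure on the $\rho$-line to the $\nu_E$-twist structure on the trivial-character-line with matching exponents. This is the analogue of (and in fact is) the ``preservation of cuspidal support'' statement for covers, and is where one must invoke the more refined results of \cite{BK} on covers and the structure of the Hecke algebra isomorphism, rather than merely the abstract equivalence.
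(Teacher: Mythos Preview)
Your approach is essentially the same as the paper's: use Bushnell--Kutzko simple types to get an equivalence to modules over $\mc{H}(\GL_n(E),\1_{\mc{I}})$, verify compatibility with parabolic induction, and read off the equality of multiplicities. Two small points of divergence are worth noting. First, the paper works throughout with $S_Z$ and $Z$ rather than $S_Q$ and $Q$ (harmless, since $m_Z=m_Q$ by Proposition~\ref{prop: zmult=qmult}), and invokes \cite{Pro}*{Corollary 6.27} rather than Bushnell--Henniart or S\'echerre--Stevens for the parabolic-induction compatibility. Second, and more substantively, your Step~3 asserts that the exponents match exactly, but the paper's argument shows this is not literally true: one only knows $F_\sigma(\rho)\cong F_1(\nu_E^c)$ for \emph{some} $c\in\bc$, so the image of $S_Z(a)$ is $F_n(S_Z(a^\circ)(c))$, i.e.\ all exponents are uniformly shifted by $c$. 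The paper then removes this shift at the end using Lemma~\ref{lem: twist_seg} (tensoring by a character preserves multiplicities). This is exactly the subtlety you flag as the ``main obstacle,'' and it is resolved not by arranging $c=0$ but by observing the shift is harmless.
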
}
	{\allowdisplaybreaks
		\begin{proof}
			In \cite{BK}*{Section 7.4}, a (family of) equivalence(s) of categories
			$$M_\tau:\Rep_\tau(\GL_{nd}(F))\to \Mod\lb\mc{H}(G, \tau)\rb$$
			is described, where $\tau$ is an irreducible representation of a compact open subgroup $J$, such that $(J, \tau)$ is a \emph{simple type} (see \cite{BK}*{5.5.10}), and $\Rep_\tau(\GL_{nd}(F)))$ is a full subcategory of $\Rep(\GL_{nd}(F)$ of representations "having type" $\tau$. 
			By \cite{BK}*{Theorem 8.4.3} there exists a supercuspidal representation $\rho$ of $\GL_d(F)$ such that the irreducible representations with cuspidal support
			$$(M, \sigma):=(\GL_d(F)^n, \nu^{a_1}\rho\boxtimes \cdots \boxtimes\nu^{a_r}\rho),$$
			are exactly those with type $(J,\tau)$.  
			In other words, we can forgo the definition of $\Rep_\tau(G)$ here, since it is the full subcategory of representations of $\Rep(G)$, where the irreducible subquotients have cuspidal support $(M, \sigma)$. 
			
			Moreover, writing $G=\GL_{nd}(F)$, the results of \cite{BK}*{Section 7.4} demonstrate that there exist a finite extension $E/F$, a subgroup $C^\times\cong \GL_n(E)$ of $G$, with an Iwahori subgroup $\mc{I}$, and an isomorphism   
			$$\Psi^n:\mc{H}(C^\times, \1_{\mc{I}})\to \mc{H}(\GL_{nd}(F), \tau).$$
			
			The main result of \cite{Pro}*{Corollary 6.27} is the following:
			
			Let $M\cong M_{n_1d, \ldots, n_rd}$ be a Levi subgroup with appropriately chosen simple types $(J_i, \tau_i)$. Then, there is a Levi subgroup $L=L_{n_1, \ldots, n_r}$,  an inclusion an inclusion $i:\mc{H}(L, \1_{\mc{I}\cap L})\to \mc{H}(C^\times, \1_{\mc{I}})$, and isomorphisms $\alpha:\mb{H}_{n, q'}\to \mc{H}(C^\times, \1_{\mc{I}})$ and $\beta:\mb{H}_{\vec{n}, q'}\to \mc{H}(L, \1_{\mc{I}\cap L})$ such that for the extension of scalars functor 
			$$\begin{tikzcd}
				\Rep_\tau(\GL_{nd}(F)) \arrow[r, "\Psi^n_\ast\circ M_\tau"] & \Mod(\mc{H}(C^\times, \1_{\mc{I}})) \\
				\Rep_{\boxtimes \tau_i}(M) \arrow[u, "I_P"] \arrow[r, "(\ten_{i=1}^r \Psi^{n_i})_\ast\circ M_{\tau_M}"'] & \Mod(\mc{H}(L, \1_{\mc{I}\cap L})) \arrow[u, "i_\ast"'] 
			\end{tikzcd}$$
			and
			$$(\ten_{i=1}^r \Psi_{n_i})_\ast\circ M_{\tau_M}\cong \bigotimes_{i=1}^r(\Psi_{n_i}^\ast\circ M_{\tau_i}).$$
			Thus, writing
			\begin{align*}
				F_\tau&:= \Psi^n_\ast\circ M_\tau:\Rep_\tau(\GL_{nd}(F))\to \Mod(\mc{H}(C^\times, \1_{\mc{I}}))\\
				F_{\tau_M}&= (\ten_{i=1}^r \Psi^{n_i})_\ast\circ M_{\tau_M}:\Rep_{\boxtimes \tau_i}(M)\to \Mod(\mc{H}(L, \1_{\mc{I}\cap L})), 
			\end{align*}
			we have that
			$$F_{\tau_M}(\pi_1\boxtimes\cdots \boxtimes\pi_r)\cong F_{\tau_1}(\pi_1)\ten_\bc \cdots \ten_\bc F_{\tau_r}(\pi_r).$$
			If $\tau$ is a simple type as above, then there is a type $\sigma$ such that in the above $\tau_i=\sigma$ for all $i$.   
			
			For each $\mc{H}(G, \tau)$-module $M$, and every complex number $c\in \bc$, \cite{BK}*{Section 7.5} defines a $\mc{H}(G, \tau)$-module $M(c)$ such that, by \cite{BK}*{Proposition 7.5.12},
			$$\Psi(\pi(c))\cong \Psi(\pi)(c),$$
			recalling that we previously defined $\pi(c):=\nu^c\ten\pi$ for representations.

			For a segment $\Delta=[\rho(a), \rho(b)]$ of $\GL_{nd}(F)$ with simple type $(J, \tau)$, define $\Delta^\circ:=[\nu_E^a, \nu_E^b]$, which has type $(\mc{I}_n, \1_{\mc{I}_n})$  where $\mc{I}_n$ is the Iwahori subgroup of $\GL_n(E)$. 
			We can choose a type $\sigma$ of $\GL_d(F)$ such that in the above commuting diagram each $\tau_i=\sigma$.  
			Then,
			\begin{align*}
				F_\tau(S_Z(\Delta)_\rho)&=F_\tau \circ I \lb \rho(a) \boxtimes \cdots \boxtimes \rho(b) \rb\\
				&=i_\ast \lb F_\sigma(\rho(a))\boxtimes\cdots\boxtimes  F_\sigma(\rho(b))\rb\\
				&=i_\ast \lb F_\sigma(\rho(a))\boxtimes\cdots\boxtimes  F_\sigma(\rho(b))\rb\\
				&\cong i_\ast \lb F_\sigma(\rho)(a)\boxtimes\cdots\boxtimes  F_\sigma(\rho)(b)\rb.
			\end{align*}

			In this case, $L$ is a torus, and $F_\sigma(\rho)$ is a simple  module over $\mc{H}(L, \1_{\mc{I}\cap L})\cong \mc{H}(\GL_1(E), \GL_1(\mc{O}_E))$, and thus every simple module, namely $F_\sigma(\rho)$,  must be of the form
			$$F_1(\nu_E^c)\cong F_1(1)(c),$$
			for some $c\in \bc$.
			
			For the Iwahori subgroup $\mc{I}_{n, E}$ of $\GL_n(E)$, we have the type $(\mc{I}_{n, E}, \1)$, and the functor
			$$F_n:=F_{(\mc{I}_{n, E}, \1)}.$$
			Similarly, for a Levi $M_{\vec{n}}$ of $\GL_n(E)$, we write
			$$F_{\vec{n}}:=F_{(M_{\vec{n}}, \mc{I}_{M_{\vec{n}}})}.$$ 
			Thus, above can be written as
			{\allowdisplaybreaks
				\begin{align*}
					F_\tau(S_Z(\Delta))
					&\cong i_\ast \lb F_\sigma(\rho(a))\boxtimes\cdots\boxtimes F_\sigma(\rho(b))\rb\\
					&\cong i_\ast \lb F_\sigma(\rho)(a)\boxtimes\cdots\boxtimes F_\sigma(\rho)(b)\rb\\
					&\cong i_\ast \lb F_1(1)(c)(a)\boxtimes\cdots\boxtimes  F_1(1)(c)(b)\rb\\
					&\cong i_\ast \lb F_1(\nu_E^{a+c})\boxtimes\cdots\boxtimes  F_1(\nu_E^{b+c})\rb\\
					&\cong i_\ast \circ F_{(1, \ldots, 1)} \lb \nu_E^{a+c}\boxtimes\cdots\boxtimes  \nu_E^{b+c}\rb\\
					&\cong F_n\circ I \lb \nu^{a+c}_E\boxtimes\cdots\boxtimes  \nu^{b+c}\rb\\
					&\cong F_n\circ (\nu_E^c\circ \det)\ten I \lb \nu_E^{a}\boxtimes\cdots\boxtimes  \nu_E^{b}\rb\\
					&\cong F_n(S_Z(\Delta^\circ))(c).
			\end{align*}}
			Since $Z(\Delta)$ is the unique irreducible subrepresentation of $S_Z(\Delta)$ we know that $ F_\tau(Z(\Delta))$ must be the unique irreducible representation of  $F_\tau(S_Z(\Delta^\circ))\cong F_n(S_Z(\Delta^\circ))(c)$, which has unique irreducible subrepresentation $F_n(Z(\Delta^\circ)(c))$, and therefore
			$$ F_\tau(Z(\Delta))\cong F_n(Z(\Delta^\circ))(c)\cong F_n(Z(\Delta^\circ)(c)).$$

			Then, 
			\begin{align*}
				F_\tau(S_Z(a))&\cong F_\tau\circ I_{d\vec{n}}\lb Z(\Delta_1)\boxtimes\cdots \boxtimes Z(\Delta_r) \rb\\
				&\cong i_\ast \circ F_{\tau_M}\lb Z(\Delta_1)\boxtimes\cdots \boxtimes  Z(\Delta_r)\rb\\
				&\cong i_\ast \lb F_{\tau_1}(Z(\Delta_1))\boxtimes \cdots \boxtimes F_{\tau_r}(Z(\Delta_r))\rb\\
				&\cong i_\ast\lb  F_{n_1}(Z(\Delta_1^\circ )(c))\boxtimes \cdots \boxtimes  F_{n_r}(Z(\Delta_r^\circ)(c))\rb\\
				&\cong i_\ast \circ F_{(n_1, \ldots, n_r)}(Z(\Delta_1^\circ)(c)\boxtimes\cdots \boxtimes Z(\Delta_r^\circ)(c))\\
				&\cong F_n\circ I_{\vec{n}}(Z(\Delta_1^\circ)(c)\boxtimes\cdots \boxtimes Z(\Delta_r^\circ)(c))\\
				&\cong F_n( I(Z(\Delta_1^\circ)\boxtimes\cdots \boxtimes Z(\Delta_r^\circ)))(c)\\
				&\cong F_n(S_Z(a^\circ)(c)).
			\end{align*}
			Again, since $Z(a)$ is the unique irreducible representation $S_Z(a)$, it must be that $F_\tau(Z(a))$ is isomorphic to the unique irreducible subrepresentation of $F_\tau(S_Z(a))\cong F_n(S_Z(a^\circ)(c))$, which is $F_n(Z(a^\circ)(c))$.  
			Therefore,
			\begin{align*}
				m(a;b)&=[S_Z(a): Z(b)] \\
				&=[F_\tau(S_Z(a)):F_\tau(Z(b))]\\
				&=[F_n(S_Z(a^\circ)(c)): F_n(Z(b^\circ)(c))] \\
				&=[S_Z(a^\circ)(c):Z(b^\circ)(c)]\\
				&=[S_Z(a^\circ):Z(b^\circ)], \ \ \ \ \tx{ by Lemma \ref{lem: twist_seg}}\\
				&=m(a^\circ;b^\circ). 
			\end{align*}
	\end{proof}}
	
	The above demonstrates that the multiplicities $m(a;b)$ are entirely determined by the case of multisegments with trivial inertial support. 
	While it is generally understood, that for $p$-adic fields $F$ and $E$, and multisegments
	\begin{align*}
		a_F&:= \{[\nu_F^{a_i}, \nu_F^{b_i}]\}_{i=1}^r\\
		a_E&:= \{[\nu_E^{a_i}, \nu_E^{b_i}]\}_{i=1}^r\\
		b_F&:= \{[\nu_F^{c_i}, \nu_F^{d_i}]\}_{i=1}^s\\
		b_E&:= \{[\nu_E^{c_i}, \nu_E^{d_i}]\}_{i=1}^s\\
	\end{align*}
	it should be the case that 
	$$m(a_F;b_F)=m(a_E;b_E),$$
	though this has never been precisely articulated.  
	We will see that indeed this holds in Corollary \ref{cor: unramseg}.


	\subsection{Standard representations as modules over affine Hecke algebras}\label{ssec: heckmod}
	
	In this section, we prove Theorem \ref{thm: heckemod} which offers an explicit description of modules over the algebra $\mb{H}_{n, q}$ corresponding to the representations $S_Z(a)$, where $a$ has trivial inertial support.  
	Theorem \ref{thm: heckemod} will be crucial to relate the result of \cite{CG}*{Theorem 8.6.23}, to \cite{Zel}*{Hypothesis 1.9} by way of \cite{Ari}*{Theorem 3.2}. 
	
	First, we describe how each representation $(V, \pi)$ of $\GL_n(F)$ with an $\mc{I}$-fixed vector determines an $\mc{H}(G, \mc{I})$-module. 
	First, we consider that $\mc{I}$-fixed vectors $V^{\mc{I}}$, and define an action of $f\in \mc{H}(\GL_n(F), \mc{I})$ on $v\in V^{\mc{I}}$ by 
	$$f\cdot v:=\int_{\GL_n(F)}f(g)\pi(g)vdg.$$
	The functor sending $(V, \pi)$ to the $\mc{H}(\GL_n(F), \mc{I})$-module $V^{\mc{I}}$ determines an equivalence of categories.
	
	To this end, we must first recall some facts about the structure of the algebra $\mc{H}(\GL_n(F), \mc{I})$.  
	Choosing a normalization for the Haar measure for which $\tx{vol}(\mc{I})=1$ for each $w\in W(\GL_n(F))$ and each dominant cocharacter $\mu\in X_\ast(T)^{\tx{dom}}$ of a fixed maximal torus $T$ in $\GL_n(F)$, we define the elements
	\begin{align*}
		\tilde{T}_w&:=\tx{ch}(\mc{I}w\mc{I}),\\
		\tilde{T}_\mu&:=\tx{ch}(\mc{I}\mu(\varpi)\mc{I}),
	\end{align*}
	of $\mc{H}(G, \mc{I})$, with $\tilde{T}_\mu$ invertible.  
	Writing $\mu=\mu_1-\mu_2$ as a difference of dominat cocharacters, we define
	$$\tilde{T}_\mu:=\tilde{T}_{\mu_1}\tilde{T}_{\mu_2}^{-1}.$$
	
	If $w_i$ is the element corresponding to the transposition $(i, i+1)$, then we define
	\begin{align*}
		S_i&:=\tilde{T}_{w_i}, \\
		X_j&:=\tilde{T}_{\varepsilon_j},
	\end{align*}	
	with relations
	\begin{alignat*}{2}
		(S_i+1)(S_i-q)&=0 && 1\leq i \leq n-1,\\
		S_iS_{i+1}S_i&=S_{i+1}S_iS_{i+1}, \ \ \ \ && 1\leq i\leq n-2\\
		S_iS_j&=S_jS_i, && |i-j|\geq 2 \\
		X_iX_j&=X_jX_i, && i \neq j\\
		X_jS_i&=S_iX_j, && i\neq j, j-1 \\
		S_iX_{i+1}S_i&=X_i, && 1\leq i \leq n-2
	\end{alignat*}
	
	Then, by \cite{Pro}*{Equations 2.10} that the $T_i$, $X_i$ generate $\mc{H}(\GL_n(F), \mc{I})$. 
	
	Noting that when \cite{Ari} writes "$q$", he means what we would call here "$\sqrt{q}$".  
	With this in mind, we will write $\mb{H}_{n, q}$ for what \cite{Ari} calls $\mc{H}_{q^2}$.  
	Let $\hat{G}=\GL_n(\bc)$, take $\hat{T}$ to be a maximal torus given by the diagonal matrices, and $\hat{W}$ the Weyl group. 
	Write $w_i$ for the simple reflection of $\hat{W}$ corresponding to $(i, i+1)$, let $\omega_i\in X^\ast(\hat{T})$ be the character such that
	$$\omega_i(\tx{diag}(t_1, \ldots, t_n))=t_i,$$
	and set $\alpha_i=\omega_i-\omega_{i+1}$. 
	Then, the algebra $\mb{H}_{n, q}$ is generated by $T_1, \ldots, T_{n-1}$, and $\theta_x$ for $x\in X^\ast(\hat{T})$, subject to the relations
	\begin{alignat*}{2}
		(T_i-q)(T_i+q^{-1})&=0, && 1\leq i \leq n-1\\
		T_iT_{i+1}T_i&=T_{i+1}T_iT_{i+1}, && 1\leq i \leq n-2\\
		T_iT_j&=T_jT_i, && j\geq i+2 \\
		\theta_x\theta_y&=\theta_y\theta_x, && x, y\in X^\ast(T)\\
		T_i\theta_x&=\theta_x T_i, && w_i(x)=x \\
		T_i\theta_xT_i &=\theta_{w_i(x)}, && w_i(x)=x+\alpha_i.
	\end{alignat*}
	
	We define an isomorphism
	$$\beta^{n, q}:\mb{H}_{n, q}\to \mc{H}(\GL_n(F), \mc{I}),$$
	by
	\begin{align*}
		\beta^{n, q}(\theta_{\omega_i})&=q^{i-\frac{n+1}{2}}X_i \\
		\beta^{n, q}(T_i)&=\sqrt{q}^{-1}S_i.
	\end{align*}
	
	Since the $S_i, X_i$ generate $\mc{H}(\GL_n(F), \mc{I})$, so will any scalar multiples of these elements.  
	To verify that this respects the relations in either algebra,
	\begin{align*}
		\beta^{n, q}\lb(T_i-\sqrt{q})(T_i+\sqrt{q})\rb&=\alpha(0) \\
		&=0\\
		&=\sqrt{q}^{-1}(S_i-q)(S_i+1) \\
		&=(\sqrt{q}^{-1}S_i-\sqrt{q})(\sqrt{q}^{-1}S_i+\sqrt{q}^{-1})\\
		&=\beta^{n, q}(T_i-\sqrt{q})\beta^{n, q}(T_i+\sqrt{q}^{-1}).
	\end{align*}
	
	Since 
	\begin{align*}
		w_i(\omega_{i+1})&=\omega_i = \omega_{i+1} + (\omega_i-\omega_{i+1}) = \omega_i+\alpha_i,
	\end{align*}
	we have that $T_i\theta_{i+1}T_i=\theta_{\omega_i},$
	and thus
	\begin{align*}
		\beta^{n, q}(T_i\theta_{i+1}T_i)&=\alpha(\theta_i)\\
		&=q^{i-\frac{n+1}{2}}X_i \\
		&=q^{i-\frac{n+1}{2}}S_iX_{i+1}S_i \\
		&=q^{i-\frac{n+1}{2}}\beta(\sqrt{q}T_i) X_{i+1}\beta(\sqrt{q}T_i)\\
		&=\beta^{n, q}(T_i) q^{i+1-\frac{n+1}{2}}X_{i+1}\beta^{n, q}(T_i)\\
		&=\beta^{n, q}(T_i) \beta^{n, q}(\theta_{i+1})\beta^{n, q}(T_i).
	\end{align*}
	
	The remaining relations are straightforward to verify.

	For $\vec{n}=(n_1, \ldots, n_r)$ where $n_1+\cdots + n_r=n$, we will write $\mb{H}_{\vec{n}, q}$ for the subalgebra of $\mb{H}_{n, q}$ which is the image of the injection
	$$\mb{H}_{n_1, q}\ten_\bc\cdots \ten_\bc \mb{H}_{n_r, q}\hookrightarrow \mb{H}_{n, q},$$
	given by 
	$$1\ten \cdots 1\ten \underbrace{T_j}_{i^{\tx{th}}} \ten 1 \ten \cdots \ten 1 \mapsto \tilde{T}_{s_{n_1+\cdots + n_i+j}},$$
	and 
	$$1\ten \cdots 1\ten \underbrace{\theta_{\omega_j}}_{i^{\tx{th}}} \ten 1 \ten \cdots \ten 1 \mapsto \tilde{T}_{\omega_{n_1+\cdots + n_i+j}}$$
	
	Thus, for a Levi subgroup $M\cong \GL_{n_1}(F)\times\cdots \times \GL_{n_r}(F)$, we have a commuting diagram
	$$\begin{tikzcd}
		\mb{H}_{n, q} \arrow[r, "\beta^{n, q}"] &  \mc{H}(G, \mc{I}) \\
		\bigotimes_{i=1}^r\mb{H}_{n_i, q} \arrow[r, "\otimes \beta^{n_i, q}"] \arrow[u, hook]& \mc{H}(M, \mc{I}_M) \arrow[u] 
	\end{tikzcd}$$
	%
	inducing a commuting diagram of functors
	$$\begin{tikzcd}
		\Rep(G)_{[T, 1]} \arrow[r, "(-)^{\mc{I}}"] & \Mod(\mc{H}(G, \mc{I})) \arrow[r, "\beta^{n, q}_\ast"] & \Mod(\mb{H}_{n, q}) \\
		\Rep(M)_{[T\cap M, 1]} \arrow[r, "(-)^{\mc{I}_M}"'] \arrow[u, "I_{M\subset P}^G"] & \Mod (\mc{H}(M, \mc{I}_M))  \arrow[u, "\mc{H}(G{,} \ \mc{I})\ten_{\mc{H}(M{,} \ \mc{I}_M)}(-)"] \arrow[r, "\ten \beta^{n_i, q}"'] & \Mod(\bigotimes_{i=1}^r\mb{H}_{n_i, q}) \arrow[u, "\mb{H}_{n, q}\ten_{\mb{H}_{\vec{n}, q}}(-)"']
	\end{tikzcd} $$
	
	
	{\allowdisplaybreaks
		Given a multisegment $a=\{[a_i, b_i]\}_{i=1}^r$, with lengths $n_i=b_i-a_i+1$, define 
		\begin{align}
			\vec{n}_a^\vee&=(n_r, \ldots, n_1)\label{eqn: n_a}\\
			\vec{z}_a^\vee&=(q^{-a_r+1}, \ldots, q^{-a_1+1})\label{eqn: z_a}\\
			x_a^\vee&:=J(n_1)\oplus\cdots \oplus J(n_r),\label{eqn: x_a}\\
			D(n_i)&=\sum_{j=1}^{n_i}\sqrt{q}^{n_i+1-2j}E_{j,j},\label{eqn: D(n)}\\
			s_a&=\bigoplus_i z_i\sqrt{q}^{1-n_i}D(n_i).\label{eqn: s_a}
	\end{align}}
	
	{\allowdisplaybreaks
		Following \cite{Ari}, given some $s$ as in the above, we define the $\mb{H}_{\vec{n}^\vee, q}$-module $\bc_{\vec{n}_a^\vee, \vec{z}_a^\vee}$ to have underlying space $\bc$, where $1\ten \cdots \ten T_i\ten\cdots \ten 1$ acts by $q$, and $\theta_x$ acts by $x(s)$.  
		
		The following result classifies those $\mb{H}_{n, q}$-modules determined by the Zelevinksy substandards $S_Z(a)$.

		\begin{theorem}\label{thm: heckemod}
			For any multisegment,
			$$\beta_\ast^{n, q}(S_Z(a)^{\mc{I}})\cong \mb{H}_{n, q}\ten_{\mb{H}_{\vec{n}_a^\vee, q}}\bc_{\vec{n}_a^\vee, \vec{z}_a^\vee}.$$ 
		\end{theorem}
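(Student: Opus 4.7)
The strategy is to apply the commuting diagram at the end of Subsection \ref{ssec: heckmod} to reduce to the case of a single segment, identify the resulting one-dimensional modules explicitly, and then translate between the natural ordering $\vec{n}_a$ arising from parabolic induction and the ordering $\vec{n}_a^\vee$ appearing in the statement. By definition $S_Z(a)\cong I^G_{P_{\vec{n}_a}}(Z(\Delta_1)\boxtimes\cdots\boxtimes Z(\Delta_r))$, so the commuting diagram immediately yields
\[
\beta^{n,q}_\ast(S_Z(a)^{\mc{I}}) \cong \mb{H}_{n,q}\ten_{\mb{H}_{\vec{n}_a, q}}\bigotimes_{i=1}^r \beta^{n_i,q}_\ast(Z(\Delta_i)^{\mc{I}_i}).
\]
The problem is thereby split into (i) identifying each one-dimensional tensor factor, and (ii) rewriting the tensor product in the reversed ordering.

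For step (i), recall that $Z(\Delta_i)$ is the one-dimensional character $\nu^{(a_i+b_i)/2}\circ\det$ of $\GL_{n_i}(F)$ (the trivial-inertial-support specialization of the unique irreducible subrepresentation of $S_Z(\Delta_i)$). Since this character is trivial on $\mc{I}_i$, the space $Z(\Delta_i)^{\mc{I}_i}$ is one-dimensional, and a direct computation of the convolution action on a nonzero vector $v$ shows $S_j\cdot v = \tx{vol}(\mc{I}_i w_j\mc{I}_i)\cdot v = q\cdot v$ (using that $\det w_j\in\mc{O}_F^\times$, so that $\nu^{(a_i+b_i)/2}(\det w_j)=1$), while each Bernstein lattice generator $\tilde T_\mu$ acts by $\tx{vol}(\mc{I}_i\mu(\varpi)\mc{I}_i)\cdot\nu^{(a_i+b_i)/2}(\det\mu(\varpi))$, a product of integer powers of $q$. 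Pulling these scalars through the rescalings $\beta^{n_i,q}(T_j)=\sqrt{q}^{-1}S_j$ and $\beta^{n_i,q}(\theta_{\omega_j})=q^{j-(n_i+1)/2}X_j$, and comparing with $\omega_j(s_i)$ for the block $s_i=z_i\sqrt{q}^{1-n_i}D(n_i)$ of $s_a$, identifies the resulting $\mb{H}_{n_i,q}$-module with $\bc_{(n_i),z_i}$ for $z_i=q^{-a_i+1}$.

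For step (ii), the subalgebras $\mb{H}_{\vec{n}_a,q}$ and $\mb{H}_{\vec{n}_a^\vee,q}$ of $\mb{H}_{n,q}$ are conjugate via the block-permuting Weyl element $w$, and the corresponding element $T_w\in\mb{H}_{n,q}$ is a product of invertible generators $T_{s_i}$, hence invertible. Conjugation by $T_w$ then yields, for any $\mb{H}_{\vec{n}_a,q}$-module $N$, a canonical isomorphism
\[
\mb{H}_{n,q}\ten_{\mb{H}_{\vec{n}_a^\vee,q}}N^{T_w}\;\cong\;\mb{H}_{n,q}\ten_{\mb{H}_{\vec{n}_a,q}}N,
\]
realized by $h\otimes n\mapsto hT_w\otimes n$, where $N^{T_w}$ denotes $N$ regarded as an $\mb{H}_{\vec{n}_a^\vee,q}$-module via conjugation. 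Applied to $N=\bigotimes_{i=1}^r\bc_{(n_i),z_i}$, the twist by $T_w$ reverses the order of the tensor factors, producing exactly $\bc_{\vec{n}_a^\vee,\vec{z}_a^\vee}$.

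The main obstacle is the bookkeeping in step (i): one must carefully track the rescaling $\sqrt{q}^{-1}$ in $\beta^{n_i,q}(T_j)$, the shift $q^{j-(n_i+1)/2}$ in $\beta^{n_i,q}(\theta_{\omega_j})$, the Iwahori double coset volumes $q^{\ell(\mu(\varpi))}$ that arise in the Bernstein decomposition of $\tilde{T}_\mu$ for non-dominant $\mu$, and the midpoint twist $(a_i+b_i)/2$ inherent in $Z(\Delta_i)$, so that the resulting scalar agrees with $\omega_j(s_i)=z_i q^{1-j}$ prescribed by Equations \ref{eqn: D(n)}--\ref{eqn: s_a}. None of the individual ingredients is deep, but the alignment of exponents demands some care.
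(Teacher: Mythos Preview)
Your step (ii) contains a genuine gap: the parabolic subalgebras $\mb{H}_{\vec{n}_a,q}$ and $\mb{H}_{\vec{n}_a^\vee,q}$ are \emph{not} conjugate by $T_w$ inside $\mb{H}_{n,q}$. It is true that for the minimal-length block-reversing element $w$ one has $T_wT_{s_i}T_w^{-1}=T_{s_{w(i)}}$ for simple reflections $s_i$ in the Young subgroup (via the braid relations), but the analogous statement fails for the lattice part: the Bernstein--Lusztig relations force $T_w\theta_xT_w^{-1}$ to differ from $\theta_{w(x)}$ by correction terms involving $T_v$ for $v<w$, and these do not lie in the target parabolic subalgebra. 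Concretely, for $n=3$, $\vec{n}_a=(2,1)$, $w=s_1s_2$, one computes
\[
T_w\,\theta_{\omega_1}\,T_w^{-1}=\theta_{\omega_2}+(q-q^{-1})\,\theta_{\omega_1}T_1^{-1},
\]
which visibly lies outside $\mb{H}_{(1,2),q}=\langle T_2,\theta_x\rangle$. Consequently the map $h\otimes n\mapsto hT_w\otimes n$ is not well-defined over the indicated tensor products, and your ``$N^{T_w}$'' is not an $\mb{H}_{\vec{n}_a^\vee,q}$-module in the way you need.

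The paper's proof sidesteps this entirely: rather than using the un-twisted square displayed before the theorem and then attempting to reverse by hand, it invokes \cite{Jan}*{Proposition 2.1.2}, which asserts directly that $(I_P^G\pi)^{\mc I}\cong \mb{H}_{n,q}\otimes_{\mb{H}_{\vec n_a^\vee,q}}(\pi^{\mc I_M})^{w_0}$, with the reversal $\vec n_a\rightsquigarrow\vec n_a^\vee$ and the $w_0$-twist already built into the statement. Your step (i) is essentially the same computation the paper carries out (the paper uses Reeder's description of the $\tilde T_\mu$-action via the Jacquet module instead of tracking double-coset volumes, which is cleaner but equivalent). The isomorphism you are after in step (ii) is in fact true, but it is a theorem about Hecke-algebra induction, not a formal consequence of conjugation by $T_w$; the shortest repair is to replace step (ii) by the citation to Jantzen.
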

	}
	{\allowdisplaybreaks
		\begin{proof}
			Given a segment $\Delta=[a, b]$, the representation $Z(\Delta)\cong \nu^{(a+b)/2}$ is the unique irreducible subrepresentation of $I_P^G(\chi)$, for $\chi=\nu^a\boxtimes\cdots \boxtimes\nu^b$.   
			We will first determine the $\mc{H}(G, \mc{I})$-module structure of $Z(\Delta)^{\mc{I}}$.  
			
			Letting $s$ be simple reflection of the Weyl group of $\GL_{n}(F)$, we compute
			\begin{align*}
				\nu^{\frac{a+b}{2}}(\tilde{T}_s)\cdot z&=\int_G\nu^{\frac{a+b}{2}}(g)zdg \\
				&=\int_{\mc{I}s\mc{I}}\nu^{\frac{a+b}{2}}(k_1sk_2)dk_1dk_2 \\
				&=\nu^{\frac{a+b}{2}}(s)\int_{\mc{I}s\mc{I}}dk_1dk_2 \\
				&=\tx{Vol}(\mc{I}s\mc{I}) \\
				&=q.
			\end{align*}
			
			Let $\pi$ be the action of the representation $J(I(\chi))\ten\delta^{-1/2}$.
			For any cocharacter $\mu\in X_\ast(T)$, written as the difference $\mu=\mu_1-\mu_2$ of dominant cocharacters, the result of \cite{Ree}*{Proposition 3.1} tells us that we can compute the action of $\delta^{1/2}(\varpi^\mu)\tilde{T}_{\mu}\in \mc{H}(T, T_{\mc{I}})\cong \bc[X_\ast (T)]$, on $v\in I_P^G(\chi)^{\mc{I}}$ 
			as
			$$\delta^{1/2}(\varpi^\mu)\tilde{T}_{\mu}\cdot v = \pi(\mu(\varpi))\cdot v.$$
			
			Since $Z(\Delta)$ is a subrepresentation of $S_Z(\Delta)$, if we suppose $v\in Z(\Delta)^{\mc{I}}\subseteq I_P^G(\chi)^{\mc{I}}$, then since $J(Z(\Delta))\ten\delta^{1/2}\cong \chi\ten\delta^{-1/2}$ is a subrepresentation of $J\circ I_P^G(\chi)\ten\delta^{-1/2}$, 
			we compute the action of $\tilde{T}_{\ve_i}$ on $z\in Z(\Delta)^{\mc{I}}$ as
			\begin{align*}
				\delta^{1/2}(\varpi^{\ve_i})\tilde{T}_{\ve_1}\cdot z
				&=\delta^{1/2}(\varpi^{\ve_i})\chi(\varpi^{\ve_i}) \delta^{-1/2}(\ve(\varpi))\\
				&= q\w -\lb a +i-1\rb.
			\end{align*}
			Hence, the $\mb{H}_{n, q}$-module $ \beta_\ast^{n, q}(Z(\Delta)^{\mc{I}})$ is determined by
			\begin{align*}
				T_i\cdot z&=\beta^{n, q}(T_i)\cdot z=\sqrt{q}^{-1}\tilde{T}_{s_i}\cdot z =\sqrt{q}z,
			\end{align*}
			and,
			\begin{align*}
				\beta^{n, q}_\ast(\theta_{\omega_i})\cdot z
				&=\beta^{n, q}(\theta_{\omega_i})\cdot z\\
				&=q^{i-(n+1)/2} \tilde{T}_{\ve_i}\cdot z\\
				&=\delta^{1/2}(\varpi^{\ve_i})\tilde{T}_{\ve_i}\cdot z\\
				&=q^{-(a+i-1)}z.
			\end{align*}
			
			Now consider a multisegment $a=\{[a_i, b_i]\}_{i=1}^r$ with $\vec{n}^\vee, \vec{z}$ defined as above, let $M\cong \GL_{n_1}(F)\times \cdots \GL_{n_r}(F)$, 
			and let $w_0$ be the longest element of the set
			$$\{w\in W | w\cdot(B\cap M)\subseteq B, w^{-1}\cdot M\subseteq B\}.$$
			By \cite{Jan}*{Proposition 2.1.2}, the following diagram commutes:
			$$\begin{tikzcd}
				\Rep(G)_{(T, 1)} \arrow[r, "(-)^{\mc{I}}"] & \Mod (\mb{H}_{n, q}) \\
				\Rep(M)_{(T, 1)} \arrow[u, "I_P^G"] \arrow[r, "(-)^{\mc{I}_M}"'] &  \Mod (\mb{H}_{\vec{n}, q}) \arrow[u, "\mb{H}_{n, q}\ten_{\mb{H}_{\vec{n}_a^\vee, q}}(-)^{w_0}"']
			\end{tikzcd}$$
			Therefore,
			\begin{align*}
				\beta_{n, q}^\ast(S_Z(a)^{\mc{I}}&\cong  \beta_\ast^{n, q}\lb I_P^G(Z(\Delta_1)\boxtimes\cdots \boxtimes Z(\Delta_r))\rb^{\mc{I}}\\
				&\cong \mb{H}_{n, q}\ten_{\mb{H}_{\vec{n}_a^\vee, q}} \ten\beta_\ast^{n_i, q}\lb(Z(\Delta_1)\boxtimes\cdots \boxtimes Z(\Delta_r))^{w_0}\rb^{\mc{I}_M}\\
				&\cong \mb{H}_{n, q}\ten_{\mb{H}_{\vec{n}_a^\vee, q}}\beta_\ast^{n_r, q}(Z(\Delta_r))^{\mc{I}_{n_r}}\boxtimes \cdots \boxtimes \beta_\ast^{n_1, q}(Z(\Delta_1))^{\mc{I}}.
			\end{align*}
			By our computations above, for $w$ in the Weyl group
			$$W\lb \prod_{i=1}^r\GL_{n_i}(\bc)\rb\cong \prod_{i=1}^rS_{n_i},$$
			corresponding to the transposition $(i, i+1)$, in the module $\mb{H}_{n, q}$-module $\boxtimes_{i=1}^rF_{n_{r-i+1}}(Z(\Delta_{r-i+1}))$, $T_w$ acts by $q$.
			Taking $s$ as above,
			\begin{align*}
				s&=\bigoplus_{i=1}^r\sum_{j=1}^{n_i} z_i\sqrt{q}^{1-n_i}\sqrt{q}^{n_i+1-2j} E_{jj}=\bigoplus_{i=1}^r\sum_{j=1}^{n_i} q^{-a_i+1-j}E_{jj},
			\end{align*}
			hence, for $1 \leq k \leq n_i$,
			\begin{align*}
				\beta^{n_i, q}_\ast (e^{\omega_{n_r+n_{r-1}+\cdots + n_{i-1}+k}})\cdot z&=\beta^{n_i, q}_\ast (e^{\omega_k})\cdot z\\
				&=q^{-(a_i+k-1)}z,\\
				&=z_i\sqrt{q}^{1-n_i}\sqrt{q}^{n_i+1-2k},\\
				&=q^{-a_i+i-k},\\
				&=e^{\omega_{n_1+\cdots + n_{i-1}+k}}(s).
			\end{align*}
			thus we see that
			$$\beta_\ast^{n_r, q}(Z(\Delta_r))\boxtimes\cdots \boxtimes \beta_\ast^{n_1, q}(Z(\Delta_1))\cong \bc_{\vec{n}_a^\vee, \vec{z}_a^\vee},$$
			and therefore
			$$\beta_\ast^{n, q}(S_Z(a))\cong \mb{H}_{n, q}\ten_{\mb{H}_{\vec{n}_a^\vee, q}}\bc_{\vec{n}_a^\vee, \vec{z}_a^\vee}.$$
			
		\end{proof}
	}

	\section{The Geometry of Vogan Varieties}
	
	In this section we turn our attention to the geometric aspects of the $p$-adic Kazhdan-Lusztig hypothesis, and prove the main result.   
	We begin in Section 3.1 by recalling some notation and results from \cite{CG}, including their version \cite{CG}*{Theorem 8.6.23} of the $p$-adic Kazhdan-Lusztig hypothesis.  
	We also prove that the varieties considered in \cite{Zel} are exactly those considered by \cite{CG}, and we prove in Proposition \ref{prop: slnpklh} that the duality operator of \cite{CG}*{Proposition 8.6.25} acts trivially on the Grothendieck group of modules, which allows us to compare different ways of computing stalks of the cohomology of perverse sheaves as made precise in Proposition \ref{prop: slnpklh}. 
	
	As the theorems of \cite{CG} only apply to complex semi-simple simply connected Lie groups, we spend Section 3.2 relating the representation theory of $\PGL_n(F)$ to the representation theory of $\GL_n(F)$, and the geometry of $\SL_n(\bc)=\rwh{\PGL}_n(F)$ to the geometry of $\GL_n(\bc)$.  
	This allows us to derive Theorem \ref{thm: cgpklh} and Corollary \ref{cor: cgpklh}, which are essentially special cases of the $p$-adic Kazhdan-Lusztig hypothesis for $\GL_n(F)$, from Proposition \ref{prop: slnpklh} of the previous section. 
	
	In Section 3.3, we recall \cite{Vog}*{Conjecture 8.11}, and consequently the necessary background to state it, including a review of the basics of the local Langlands correspondence.  
	In particular, we purpose a minor sign change in the statement of \cite{Vog}*{Conjecture 8.11}, in Conjecture \ref{pKLH}.  
	
	{\allowdisplaybreaks
		The main result of Section 3.4 is Proposition \ref{prop: 1raygeo}, which resolves Conjecture \ref{pKLH} for representations with \emph{simple} inertial support (Definition \ref{def: simp}). 
		We also arrive at Corollary \ref{cor: unramseg}, which formally resolves (with Theorem \ref{thm: pklh}) the expectation that the multiplicities $m(a;b)$ depend purely on the combinatorics, and not the underlying fields or supercuspidal representations.  
		
		The main result of this paper, Theorem \ref{thm: pklh} is proved in Section 3.5, which resolves the $p$-adic Kazhdan-Lusztig hypothesis for $\GL_n(F)$. 
	}
	
	\subsection{The work of Chriss and Ginzburg}

	In this section, we recall some of the definitions and results of \cite{CG}, and relate the varieties used by \cite{CG} to the varieties used in the formulation of the $p$-adic Kazhdan-Lusztig hypothesis in \cite{Zel}.  
	The main result is Proposition \ref{prop: slnpklh}, which relates two different calculations of stalks of perverse sheaves appearing in \cite{CG}*{Theorem 8.6.23}.  
	Proving this result requires us to first establish some technical results about the modules defined in \cite{CG}.  
	In particular, we prove that the duality operation introduced in \cite{CG}*{Corollary 8.6.25} is actually the identity on the Grothendieck group of said modules. 
	
	Let $\hat{G}$ be a complex Lie group,
	let $\mc{N}_{\hat{G}}$ be the nilpotent cone of $\mf{g}:=\tx{Lie}(\hat{G})$, take $\mc{B}$ to be the variety of Borel subalgebras of $\mf{g}$, and for a semisimple element $a=(s, t)$ in $\hat{G}\times \bc^\ast$ define
	\begin{align*}
		\mc{N}_{\hat{G}}^a&:=\lset x\in \mc{N}_{\hat{G}}| sxs^{-1}=tx\rset,\\
		\mc{B}^s&:=\lset b\in \mc{B} | sbs^{-1}=b\rset\\
		\tilde{\mc{N}}_{\GL_n}^a&:=\lset (x, b)\in \mc{N}_{\GL_n}^a\times\mc{B}^s| x\in b\rset,\\
		\tilde{\mc{N}}_{\SL_n}^a&:=\lset (x, b)\in \mc{N}_{\SL_n}^a\times\mc{B}^s| x\in b\rset
	\end{align*}
	and let $\mu:\tilde{\mc{N}}_{\hat{G}}^a\to \mc{N}_{\hat{G}}^a$ be projection on the first factor. 
	
	Define $\mc{C}$ to be the perverse sheaf on $\tilde{\mc{N}}^a$ such that for each connected component $X$, $\mc{C}|_X\cong \mathbbm{1}_X[\dim_\bc X]$. 
	Then $E_{s, q}:=\Ext_{D^b(\mc{N}_{\SL_n}^{(s, q)})}^\bullet(\mu_\ast\mc{C}, \mu_\ast\mc{C})$ also has the structure of a $\bc$-algebra given by the Yoneda product.  
	
	We now suppose that $\hat{G}$ is semi-simple and simply connected.  
	For every point $x\in \mc{N}_{\hat{G}}^a$, let $\mc{B}_x^s$ be the fiber $\mu^{-1}\{x\}$.  
	The centralizer $Z_{\hat{G}}(s)$ acts on $\mc{N}_{\hat{G}}^a$ by the adjoint map, and by \cite{CG}*{Lemma 8.1.8, Proposition 8.6.15} there is a $E_{s, q}$-module structure on Borel-Moore homology $H_\bullet^{\tx{BM}}(\mc{B}_x^s)$ of $\mc{B}_x^s$, where if $y$ is in the  $Z_{\hat{G}}(s)$-orbit of $x$,  $H_\bullet^{\tx{BM}}(\mc{B}_x^s)\cong H_\bullet^{\tx{BM}}(\mc{B}_y^s)$
	as $E_{s, q}$-modules.  
	
	\begin{definition}
		For a $G$-variety $X$, a \emph{geometric parameter} $\phi=(C, \mc{L})$ is a pair where $C$ is a $G$-orbit, and $\mc{L}$ is a $G$-equivariant local system on $C$. 
		Given a geometric parameter $\phi=(C, \mc{L})$, we will write $C_\phi=C$ and $ \mc{L}_\phi=\mc{L}$.  
	\end{definition}
	
	By \cite{CG}*{Proposition 8.6.15}, 
	the Borel-Moore homology $H_\bullet^{\BM}(\mc{B}_x^s)$ can be given the structure of an $E_{s, q}$-module. 
	The double-centralizer $Z_{\hat{G}}(s, x)$ also acts on $H_\bullet^{\BM}(\mc{B}_x^s)$, which induces an action of the component group $A_x:=Z_{\hat{G}}(s, x)/Z_{\hat{G}}(s, x)^\circ$.  
	Since the category of $Z_{\hat{G}}(s, x)$-equivariant local systems on an orbit $C$ is equivalent to the category of finite-dimensional representations $\Rep(A_x)^{\mathrm{fd}}$ of $A_x$, each geometric parameter $\gamma=(C, \mc{L})$ determines an irreducible representation $\rho(\gamma)$ of $A_x$.  
	Thus, if $x\in C$, one can form the $\rho(\gamma)$-typic component $H_\bullet^{\BM}(\mc{B}_x^s)_\gamma$, which also carries the structure of an $E_{s, q}$-module by \cite{CG}{Proposition 8.6.16}. 
	
	Given a geometric parameter $\gamma=(C, \mc{L})$, we write $P_\gamma$ for the intersection cohomology complex $\mc{IC}(C, \mc{L})$ associated to $\gamma$. 
	Since these are constructible complexes, the restriction of $\mathcal{H}^n(\iota^! P_\gamma)$ or $\mathcal{H}^n(\iota^\ast P_\gamma)$ to any given orbit is again a local system. 
	Therefore, an entirely similar fashion to the above, for every geometric parameter $\xi$, there exist vector spaces
	$\mathcal{H}^n(\iota^! P_\gamma)_\xi$ (resp. $\mathcal{H}^n(\iota^\ast P_\gamma)_\xi$) whose dimension is the multiplicity of the local system $\mc{L}_\xi$ in $\mathcal{H}^n(\iota^! P_\gamma)|_{C_\xi}$ (resp. $\mathcal{H}^n(\iota^\ast P_\gamma)|_{C_\xi}$).
	
	By \cite{CG}*{theorem 8.6.23}, 	the multiplicity $[M: N]$ of $N$ in $M$ as a $E_s$-module,
	\begin{align}
		[H_\bullet^{\tx{BM}}(\mc{B}_x^s)_\xi:L_\gamma]&=\sum_{k\in \bz}\dim \mc{H}^k(i_x^!P_\gamma)_\xi=\sum_{k\in \bz}[ \mc{H}^k(i_x^!P_\gamma): \mc{L}_\xi]  \\
		[H^\bullet(\mc{B}_x^s)_\xi:L_\gamma]&=\sum_{k\in \bz}\dim \mc{H}^k(i_x^\ast P_\gamma)_\xi=\sum_{k\in \bz}[ \mc{H}^k(i_x^\ast P_\gamma):\mc{L}_\xi].\label{eqn: cgpklh}
	\end{align}
	
	For $s\in \SL_n(\bc)$, 
	$$\mc{N}_{\GL_n}^{(s, q)}=\mc{N}_{\SL_n}^{(s, q)},$$
	and for every $g\in Z_{\GL_n(\bc)}(s)$, and $x\in \mc{N}_{\GL_n}^{(s, q)}$, there exists $h\in Z_{\SL_n(\bc)}(s)$ such that $g\cdot x = h\cdot x$. 
	Therefore, the $Z_{\GL_n(\bc)}$-orbit and the $Z_{\SL_n(\bc)}$-orbits coincide. 
	By \cite{AchA}*{Lemma 6.1}\footnote{This result was certainly known much earlier, but author is unaware of where else the proof appears.}, for every $Z_{\GL_n(\bc)}(s)$-orbit $C$, the only $Z_{\GL_n(\bc)}(s)$-equivariant local system on $C$ is the constant sheaf. 
	Since $Z_{\SL_n(\bc)}(s)=Z_{\GL_n(\bc)}(s)\cap \SL_n(\bc)$, we have the inclusion map $Z_{\SL_n(\bc)}\to Z_{\GL_n(\bc)}(s)$. 
	Since the change of groups functor 
	$$\Per_{Z_{\GL_n(\bc)}(s)}\lb\mc{N}_{\SL_n}^{(s, q)}\rb \xrightarrow{\For} \Per_{Z_{\SL_n(\bc)}(s)}\lb\mc{N}_{\SL_n}^{(s, q)}\rb,$$
	commutes with restriction, 
	$$\For (\mc{H}^k(i_x^\ast P_\gamma))\cong \mc{H}^k(i_x^\ast \For(P_\gamma)).$$
	Therefore, we conclude that in $\SL_n(\bc)$, for any geometric parameter $\gamma=(C, \1_C)$, where $\1_C$ is the constant sheaf on $C$, the local system $\mc{H}^k(i_x^\ast P_\gamma)|_D$ only has the trivial local system in its composition factors. 
	
	We will now prove that the varieties appearing in \cite{Zel} are all (equivariantly) isomorphic to one of the form $\mc{N}_{\GL_n}^{(s, q)}$, both as a means to connect the phrasing of the $p$-adic Kazhdan-Lusztig hypothesis to the work of \cite{CG} and \cite{Vog}, but more importantly to extend the result \cite{Zel}*{Theorem 2.2} which relates the ordering on multisegments to the closure ordering on orbits. 
	First, we recall the varieties defined in \cite{Zel}.  
	
	Following \cite{Zel}*{Section 1.8}, for a function $\varphi:\bz\to \bn$, with finite support, we define the graded $\bc$-vector space $V_\varphi=\bigoplus_{n\in \bz}V_n$ where $\dim_\bc V_n=\varphi(n)$, and let $E_\varphi$ be the collection of operators $T:V\to V$ such that $T(V_n)\subseteq V_{n+1}$.  
	Writing $i$ (resp. $j$) for the minimum (resp. maximum) integer $n$ for which $\varphi(n)\neq 0$, 
	the group $A_\varphi:=\prod_{n=i}^j\GL(V_n)$ acts on $E_\varphi$ by 
	$$(g_i, g_{i+1}, \ldots, g_j)\cdot(x_i, x_{i+1} \ldots, x_{j+1})=(g_ix_ig_{i+1}^{-1}, \ldots, g_jx_jg_{j+1}^{-1}).$$
	
	\begin{lemma}\label{lem: zelvarcg}
		Let $a_0=\{[a_i]\}_{i=1}^n$ be a maximal multisegment with inertial support given by the trivial representation of $\GL_1(F)$ for a $p$-adic field $F$ with residue field order $q$.   
		Then, 
		\begin{enumerate}
			\item there exists an $s\in \GL_n(\bc)$ such that the multisegments $b\leq a$ are in bijection with the $Z_{\GL_n(\bc)}(s)$-orbits of $\mc{N}_{\GL_n(\bc)}^{(s, q)} $, and
			\item writing $C_b$ for the unique orbit determined by a multisegment $b\leq a$, we have that $b\leq c$ if and only if $C_c\subseteq \bar{C}_b$. 
		\end{enumerate}
	\end{lemma}
	\begin{proof}
		First we suppose that for each $i$, either $a_{i+1}=a_i$ or $a_{i+1}=a_i+1$.  
		Therefore, the imaginary parts $\Im(a_i)=\Im(a_j)$ are all equal.  
		For any segment $\Delta = [x, y]$, define $\Delta'= [\lfloor \Re(x) \rfloor, \lfloor \Re(y) \rfloor]$, and for any $b=\{\Delta_1, \ldots, \Delta_r\}$ such that $b\leq a$, define $b':=\{\Delta_1', \ldots, \Delta_r'\}$.  
		Observe that for $b, c\leq a$ we have that $b'\leq c'$ if and only if $b\leq c$. 
		In particular, we have a bijection between $\{b\leq a\}$ and $\{c\leq a'\}$. 
		
		Define a multiset $\varphi:\bz\to \bn$ such that $\varphi(\lfloor \Re(a_i)\rfloor)$ is equal to the number of occurrences of $[a_i]$ in $a$.  
		Choosing a basis for each $V_n$ in $E_\varphi$, the union is a basis of $V$, and thus the matrix of each $T\in E_\varphi$ in the basis is of the form
		$$Y= \begin{pmatrix}
			0 & 0 & \cdots  &  0 & 0 \\
			y_1 & 0 & \cdots & 0 & 0 \\
			0 & y_2 & \cdots & 0 & 0 \\
			\cdots & & \cdots & & \cdots \\
			0 & 0 & \cdots & y_t & 0 \\
			0 & 0 & \cdots & 0 & 0 
		\end{pmatrix}$$
		and every such matrix determines an element $T\in E_\varphi$.  
		
		Let $s=\tx{diag}(s_1, \ldots, s_n)$ be the $n$-by-$n$ diagonal matrix with eigenvalues $q^{i}$, with multiplicity $\varphi(i)$, arranging the diagonal of $s$ such that $\log_q(s_i)<\log_q(s_{i+1})$.
		Then, the elements of $\mc{N}_{\GL_n}^{(s, q)}$ are exactly those matrices of the form $Y$, and thus choosing a basis yields an isomorphism $f:E_\varphi\to \mc{N}_{\GL_n}^{(s, q)}$.
		In the same way, we have an isomorphism
		\begin{align*}
			A_\varphi          & \xrightarrow{\psi} Z_{\GL_n(\bc)}(s) \\ %
			(g_1, \ldots, g_t) & \mapsto \tx{diag}(g_1, \ldots, g_t)
		\end{align*} 
		such that $f(g\cdot X)=\psi(g)\cdot f(X)$.  
		In other words, there is an equivariant isomorphism $(f, \psi):(E_\varphi, A_\varphi)\to (\mc{N}_{\GL_n}^{(s, q)}, Z_{\GL_n(\bc)}(s))$. 
		
		For any multisegments $b, c\leq a$, let $D_{b'}$ be the $A_\varphi$-orbit of $E_\varphi$ associated to $b'$ as in \cite{Zel}, and $C_b:=f(D_{b'})$, which is also an $Z_{\GL_n(\bc)}(s)$-orbit.
		Given $b, c\leq a$, we have $b\leq c$ if and only if $b'\leq c'$, and by \cite{Zel}*{Theorem 2.2},
		$$b'\leq c' \iff D_{c'}\subseteq \bar{D}_{b'},$$
		and thus we can conclude that
		$$b\leq c \iff C_c\subseteq \bar{C}_b.$$
		
		Now consider an arbitrary maximal multisegment $a=\{[a_{ij}]\}_{i=1}^n$ with trivial inertial support.  
		We can assume they are arranged such that for each $i$ and $j$, $a_{i,j+1}=a_{i,j}, a_{i, j}+1$.     
		Defining
		$$
		s_i  :=\bigoplus_{j=1}^{t_i}q^{a_{ij}}, \ \ \ \ \ \ \ \ \ \  \ \ \
		s_a =\bigoplus_{i=1}^rs_i,$$
		and writing $m_i:=t_1+\cdots + t_i$,
		$$\mc{N}_{\SL_n}^{(s_a, q)}=\tx{span}_\bc\{E_{m_i+j, m_i+k} : 1\leq j, k \leq m_i, a_{ij}-a_{ik}=1\}.$$
		Thus, we have an isomorphism
		\begin{align*}
			\prod_{i=1}^r\mc{N}_{\SL_{t_i}(\bc)}^{(s_i, q)}&\xrightarrow{f} \mc{N}_{\SL_n}^{(s, q)} \\
			(x_1, \ldots, x_r)&\mapsto \bigoplus_{i=1}^rx_i
		\end{align*}
		which is equivariant with respect to the corresponding isomorphism
		\begin{align*}
			\prod_{i=1}^r Z_{\SL_{t_i}(\bc)}(s_i)&\to Z_{\SL_n(\bc)}(s)\\
			(g_1, \ldots, g_{r+1})&\mapsto \bigoplus_{i=1}^{r+1}g_i
		\end{align*}
		of groups. 
		For the multisegments $a_i=\{[a_{ij}]\}_{j=1}^{s_i}$, if $[a_{ij}]$ and $[a_{kl}]$ are linked, then $i=k$.  
		Hence, for $a:=a_1+\cdots + a_r$, by Lemma \ref{prop: prodmult} every $b\leq a$ is of the form $b_1+\cdots + b_r$ where each $b_i\leq a_i$.  
		Thus, we can define
		$$ C_{b_1+\cdots +b_r}:=f\lb \prod_{i=1}^r C_{b_i}\rb.$$
		
		For each $b_i\leq a_i$, where $a_i=\{[a_{jk}]\}$ such that $a_{j, k+1}=a_{j, k}, a_{j, k}+1$.  
		Thus by the first case for multisegments considered, for some $c_i\leq b_i$ we have $C_{c_i}\subseteq \bar{C}_{b_i}$ if and only if $b_i\leq c_i$.  
		Therefore, for each $b\leq c$, by Lemma \ref{lem: multirays} we can choose $b_i, c_i \leq a_i$ such that $b=b_1+\cdots + b_r, c=c_1+\cdots + c_r$ and each $b_i \leq c_i$.  
		Therefore, for every $b\leq c $ if and only if each $b_i \leq c_i$, if and only if each $C_{c_i}\subseteq \bar{C}_{b_i}$ if and only if 
		$$C_{c_1}\times \cdots\times C_{c_r}\subseteq \bar{C}_{b_1}\times\cdots \times \bar{C}_{b_r}=\overline{C_{b_1}\times \cdots \times C_{b_r}}=\bar{C}_{b_1+\cdots + b_r}.$$
	\end{proof}
	
	Now that we have Lemma \ref{lem: zelvarcg}, we can prove the following technical result which will be crucial in the next section for using \cite{CG}*{Theorem 8.6.23} in the case $\hat{G}=\SL_n(\bc)$, to derive the $p$-adic Kazhdan-Lusztig hypothesis for $\GL_n(F)$.

	\begin{proposition}\label{prop: slnpklh}
		For $s\in\SL_n(\bc)$, and any geometric parameters $\gamma, \xi$ for which $\mc{L}_\gamma, \mc{L}_\xi$ are the constant sheaf, 
		\begin{enumerate}
			\item $L_\gamma^\vee\cong L_\gamma$,
			\item $
			[H_\bullet^{\tx{BM}}(\mc{B}_x^s)_\xi:L_\gamma]
			=\sum_{k\in \bz}\dim \mc{H}^k(i_x^\ast P_\gamma)_\xi=\sum_{k\in \bz}\dim \mc{H}^k(P_\gamma)|_{C_\xi}.$
		\end{enumerate}
	\end{proposition}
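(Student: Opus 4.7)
The plan is to combine Verdier duality on the sheaf side with the observation (already recorded immediately above the statement, via \cite{AchA}*{Lemma 6.1}) that every $Z_{\hat G}(s)$-equivariant local system on an orbit in $\mc{N}_{\SL_n}^{(s,q)}$ is constant. For part (1), I would identify the duality $(-)^\vee$ of \cite{CG}*{Corollary 8.6.25} with the operation on geometric parameters that sends $\gamma = (C, \mc{L})$ to $\gamma^\vee = (C, \mc{L}^\vee)$, so that $L_\gamma^\vee \cong L_{\gamma^\vee}$. Under the proposition's hypothesis that $\mc{L}_\gamma$ is the constant sheaf, the local system $\mc{L}_\gamma$ is self-dual, forcing $\gamma^\vee = \gamma$ and hence $L_\gamma^\vee \cong L_\gamma$.

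For the first equality in part (2), I would apply the second line of \cite{CG}*{Theorem 8.6.23} to write
\[
[H^\bullet(\mc{B}_x^s)_\xi : L_\gamma] = \sum_{k\in\bz}\dim \mc{H}^k(i_x^\ast P_\gamma)_\xi.
\]
The duality of \cite{CG}*{Corollary 8.6.25} intertwines the module structures on $H_\bullet^{\BM}(\mc{B}_x^s)$ and $H^\bullet(\mc{B}_x^s)$ via the contragredient, so that $[H_\bullet^{\BM}(\mc{B}_x^s)_\xi : L_\gamma] = [H^\bullet(\mc{B}_x^s)_\xi : L_\gamma^\vee]$; combining with part (1) yields the required first equality.

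For the second equality, fix any $x \in C_\xi$ and observe that $\mc{H}^k(i_x^\ast P_\gamma)$ is precisely the stalk at $x$ of the $Z_{\hat G}(s)$-equivariant local system $\mc{H}^k(P_\gamma)|_{C_\xi}$. By the discussion in the paragraph preceding the proposition, the only composition factor of this local system is the constant sheaf $\mathbbm{1}_{C_\xi} = \mc{L}_\xi$, so its $\xi$-isotypic component coincides with the whole local system. Therefore $\dim \mc{H}^k(i_x^\ast P_\gamma)_\xi = \dim \mc{H}^k(P_\gamma)|_{C_\xi}$, and summing over $k$ completes the argument.

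The main obstacle will be carefully matching the algebraic duality of \cite{CG}*{Corollary 8.6.25} with the involution $\gamma \mapsto \gamma^\vee$ on geometric parameters, since this identification is what simultaneously powers part (1) and the key intertwining $[H_\bullet^{\BM}(\mc{B}_x^s)_\xi : L_\gamma] = [H^\bullet(\mc{B}_x^s)_\xi : L_\gamma^\vee]$. Once this matching is in hand, the remaining steps reduce to bookkeeping with ranks of local systems and to invoking the result from \cite{AchA} cited above.
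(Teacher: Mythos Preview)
Your argument for part~(2)'s second equality agrees with the paper's, but your route to part~(1) and the first equality of~(2) is genuinely different from the paper's, and it rests precisely on the step you yourself flag as the main obstacle.

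You want to prove $L_\gamma^\vee \cong L_\gamma$ by first establishing that the module-theoretic contragredient of \cite{CG}*{Corollary~8.6.25} matches the geometric involution $\gamma=(C,\mc{L})\mapsto(C,\mc{L}^\vee)$ on parameters, and then invoking self-duality of the constant sheaf. The paper never proves or uses this identification. Instead, it argues by induction down the closure order on orbits, using \emph{only} the two multiplicity formulas of \cite{CG}*{Theorem~8.6.23} together with the raw duality $H_\bullet^{\BM}(\mc{B}_x^s)^\vee\cong H^\bullet(\mc{B}_x^s)$. At the open orbit $C$ (whose existence comes from Lemma~\ref{lem: conti}), both standard modules are already simple and isomorphic to $L_{(C,\1_C)}$, so $L_{(C,\1_C)}^\vee\cong L_{(C,\1_C)}$. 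For a sub-open orbit $D$, comparing the two expansions
\[
[H^\bullet(\mc{B}_x^s)]=[L_{(D,\1_D)}]+p[L_{(C,\1_C)}],\qquad [H_\bullet^{\BM}(\mc{B}_x^s)]=m[L_{(D,\1_D)}]+n[L_{(C,\1_C)}],
\]
and applying the duality together with the already-established $L_{(C,\1_C)}^\vee\cong L_{(C,\1_C)}$ forces $L_{(D,\1_D)}^\vee\cong L_{(D,\1_D)}$ and $m=1$, $n=p$. Continuing inductively proves~(1) and simultaneously yields the first equality of~(2) as a byproduct.

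The trade-off: your approach, if the identification $L_\gamma^\vee\cong L_{(C,\mc{L}_\gamma^\vee)}$ can be cited or established, is conceptually cleaner and would generalize beyond type~$A$ without change. The paper's approach is more elementary in that it uses only what \cite{CG} literally states, at the cost of an induction that is specific to the situation where every orbit carries only the constant equivariant local system. Since you identify the matching of dualities as the main obstacle and do not supply it, the paper's induction is the safer, self-contained route.
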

	\begin{proof}
		\begin{enumerate}
			\item By Lemma \ref{lem: zelvarcg}, and the discussion that follows it, there is a unique open orbit $C$.  
			Writing $\1_C$ for the constant sheaf on an orbit $C$, set $\xi=(C, \1_C)$, and let $x\in C$.   
			The only $P_\gamma$ that is supported on the open orbit is when $\gamma=(C, \mc{L})$.  
			Moreover, $\mc{H}^n(P_\gamma)|_{C}\cong \mc{L}$ for $n=\dim C$, and 0 otherwise.  
			Therefore, we conclude that $
			H_\bullet^{\mathrm{BM}}(\mc{B}_x^s)_\xi\cong L_\xi$ and $H^\bullet(\mc{B}_x^s)_\xi\cong L_\xi$.

			By \cite{CG}*{Corollary 8.6.25}
			$$L_\xi^\vee\cong H_\bullet^{\tx{BM}}(\mc{B}_x^s)_\xi^\vee\cong H^\bullet(\mc{B}_x^s)_\xi\cong L_\xi.$$
			
			Let $D$ be an orbit such that there does not exist an orbit $E$ such that $D<E<C$, and choose $x\in D$.   
			The only $P_\gamma$ supported on $D$ are those for which the orbit of $\gamma$ is either $D$ or $C$.  
			In particular, $\mc{H}^n(i_x^\ast P_\gamma)\cong \mc{L}_\gamma$ for $n=\dim D$, and 0 otherwise.
			Therefore, in the Grothendieck group of $E_{s, q}$-modules
			$$[H^\bullet(\mc{B}_x^s)]=[L_{(D, \1_D)}] + p\cdot [L_{(C, \1_C)}].$$
			Likewise,
			$$[H_\bullet^{\mathrm{BM}}(\mc{B}_x^s)]=m\cdot [L_{(D, \1_D)}] + n\cdot [L_{(C, \1_C)}].$$
			Again by \cite{CG}*{Corollary 8.6.25}
			\begin{align*}
				m\cdot [L_{(D, \1_D)}^\vee] + n\cdot [L_{(C, \1_C)}]&= m\cdot [L_{(D, \1_D)}^\vee] + n\cdot [L_{(C, \1_C)}^\vee]\\
				&= [H_\bullet^{\mathrm{BM}}(\mc{B}_x^s)^\vee]\\
				&= [H^\bullet(\mc{B}_x^s)]\\
				&=[L_{(D, \1_D)}] + p\cdot [L_{(C, \1_C)}].
			\end{align*}
			
			Therefore $L_\xi=L_\xi^\vee, m=1, n=p$.  
			The result follows by continuing inductively in this manner. 
			
			\item Since the category $\Loc_{Z_{\SL_n(\bc)(s)} }(D)$ of $Z_{\SL_n(\bc)}(s)$-equivariant local systems on $D$ is semi-simple, and the trivial local system is 1-dimensional,
			$$[\mc{H}^k(i_x^\ast P_\gamma): \mc{L}_\xi]=\dim \mc{H}^k(P_\gamma)|_{C_\xi}.$$
		\end{enumerate}
	\end{proof}
	

	\subsection{Relation to the representation theory of $\PGL_n(F)$}
	
	Technically, as written, the theorems of \cite{CG} only apply to the case when the dual group $\hat{G}$ is semisimple and simply connected.  
	As such, we must perform an intermediary step, passing through the representation theory of $\PGL_n(F)$ in order to apply the results of \cite{CG}, which we carry out in this section.  
	This section concludes with Theorem \ref{thm: cgpklh} and Corollary \ref{cor: cgpklh} which prove an analogue of Proposition \ref{prop: slnpklh} for $\GL_n(F)$, and essentially confirms \cite{Zel}*{Hypothesis 1.9} for multisegments with trivial inertial support. 
	
	Consider the quotient map 
	$$p:\GL_n(F)\to \GL_n(F)/Z(\GL_n(F))\cong \PGL_n(F).$$
	Writing $\Rep(\GL_n(F))^Z$ for the full subcategory of $\Rep(\GL_n(F))$ with trivial central character, restriction of scalars $p_\ast$ induces an equivalence of categories
	$$ \Rep(\PGL_n(F))\xrightarrow{p_\ast}\Rep(\GL_n(F))^Z.$$
	Moreover, for the Iwahori-subgorup $\mc{I}$ of $\GL_n(F)$, $\mc{I}/Z$ is an Iwahori subgroup of $\PGL_n(F)$.
	For the inclusion $\mb{H}_{\SL_n(\bc)}\xhookrightarrow{\iota} \mb{H}_{\GL_n(\bc)}$, the restriction of scalars functor satisfies the commutative diagram

	$$\begin{tikzcd}
		\Rep(\PGL_n(F))_{(T, 1)} \arrow[r, "p_\ast"] \arrow[d, "\simeq"] & \Rep(\GL_n(F))_{(T, 1)}^Z \arrow[d, "\simeq"] \\
		\Mod(\mb{H}_{\SL_n(\bc)})  & \Mod(\mb{H}_{n, q}) \arrow[l, "\iota_\ast"]
	\end{tikzcd}$$
	
	Therefore, the irreducible representations of $\PGL_n(F)$ are in bijection with the irreducible representations of $\GL_n(F)$ having trivial central character.  
	
	Letting $M=M_{n_1, \ldots, n_r}\cap \SL_n(\bc)$, by the above commuting diagram, 
	$$\iota_\ast\lb \mb{H}_{n, q}\ten_{\mb{H}_{\vec{n}, q}}\bc_{\vec{n}, \vec{z}}\rb\cong \mb{H}_{\SL_n(\bc)}\ten_{\mb{H}_{M, q}}\bc_{\vec{n}, \vec{z}}.$$
	
	For $s=\bigoplus_{i=1}^nq^{a_i}$, $q^{-z}s\in \SL_n(\bc)$, and by the previous section
	$$\mc{N}_{\GL_n}^{(s, q)}=\mc{N}_{\SL_n}^{(q^{-z}s, q)}.$$
	Therefore the underlying space of $H_\bullet^{\BM}(\mc{B}_x^s)$ is the same, taking $x$ to be in either variety, and by the above commuting diagram, we find that the $\mb{H}_{\SL_n(\bc)}$-module structure on $H_\bullet^{\BM}(\mc{B}_x^s)$ is isomorphic to the image of the $\mb{H}_{n, q}$-module $H_\bullet^{\BM}(\mc{B}_x^s)$ under $\iota_\ast$.  
	Moreover, each of the simple $\mb{H}_{\SL_n(\bc)}$-modules $L_\gamma$, determine a simple $\mb{H}_{n, q}$-module.  
	
	Every segment $b$ there is a maximal segment $a=\{[a_1], \ldots, [a_n]\}$ for which $b\leq a$.  
	Therefore, $Z(b)$ is a subquotient of the indecomposible representation $S_Z(a)$, and thus they must have isomorphic central characters.  
	The central character of 
	$$S_Z(a)=I_B^G(\nu^{a_1}\boxtimes \cdots \boxtimes \nu^{a_n}),$$
	is 
	$$z I_n \mapsto \delta_B^{1/2}(\nu^{a_1}\boxtimes \cdots \boxtimes \nu^{a_n})(z, z, \ldots, z).$$
	Since 
	$$\delta_B^{1/2}=\nu^{(n-1)/2}\boxtimes \nu^{(n-3)/2}\boxtimes\cdots \boxtimes\nu^{(1-n)/2},$$
	is trivial on $(z, z, \ldots, z)$, we see that the central character is trivial if and only if $a_1+\cdots + a_n=0$.  
	In other words, $Z(b)$ determines an irreducible representation of $\PGL_n(F)$ if and only if the complex numbers defining $b$ sum to 0. 
	
	For any multisegment $b=\{\Delta_i\}_{i=1}^r$ where $\Delta_i=[b_i, c_i]$, we make the following definitions similar to Equations \ref{eqn: z_a}, \ref{eqn: x_a}, \ref{eqn: s_a}, 
	\begin{align*}
		\vec{n}_b^\vee&:=(n_r, \ldots, n_1), \\
		x_b^\vee&:=\bigoplus_{i=1}^rJ(n_{r-i+1}),\\
		\vec{z}_b^\vee&:=(q^{-b_r+1}, \ldots, q^{-b_1+1}),\\
		\bar{s}_b&=\bigoplus_{i=1}^r z_i\sqrt{q}^{1-n_{r-i+1}}D(n_{r-i+1}).
	\end{align*} 
	Then, by the above, $Z(b)$ corresponds to a representation of $\PGL_n(F)$ if and only if $$\sum_{i=1}^r\sum_{j=0}^{c_i}(b_i+j)=0,$$
	which holds if and only if
	$\bar{s}_b\in \SL_n(\bc)$.  
	In this case $x_b^\vee\in \mc{N}_{\GL_n}^{(\bar{s}_b, q)}$.

	Now, supposing $a=\{[a_{ij}]\}$ is a maximal multisegment satisfying $(a_{ij}-a_{kl})\in \bz$ if and only if $i=k$.  
	Thus we can write $a=a_1+\cdots + a_r$ such that $\Delta\in a_i, \Delta'\in a_j$ are linked then $i=k$, and we can decompose any $b\leq a$, by $b=b_1+\cdots + b_r$ where $b-i\leq a_i$.  
	For each $i$, choose $w_i$ to be a permutation matrix sending $s_{b_i}\mapsto s_{a_i}$.  
	Then, for $w:=\bigoplus_{i=1}^r w_i$, we have that $\Ad(w)\bar{s}_b=\bar{s}_a, \Ad(w)s_b=s_b$, and letting $t$ stand for transpose, a commuting diagram of equivariant isomorphisms
	$$\begin{tikzcd}
		\mc{N}^{\bar{s}_b} \arrow[d, "w_a"'] \arrow[r, "t"] & \mc{N}^{s_b} \arrow[d, "w_b"] \\
		\mc{N}^{\bar{s}_a} \arrow[r, "t"] & \mc{N}^{s_a}
	\end{tikzcd}$$
	In particular, 
	\begin{align*}
		\Ad(w)(x_b^\vee)^t&=\Ad(w)\lb \bigoplus_{i=1}^r \bigoplus_{j=1}^{s_i} J(n_{i, s_i-j+1})\rb^t\\
		&=\bigoplus_{i=1}^r \Ad(w_i)\bigoplus_{j=1}^{s_i} J(n_{i,s_i-j+1})^t\\
		&=\bigoplus_{i=1}^r \bigoplus_{j=1}^{s_i} J(n_{i,j})^t \\
		&=x_b.
	\end{align*}
	Therefore $\Ad(w)(C_b^\vee)^t = C_b$.  
	
	{\allowdisplaybreaks
		\begin{theorem}\label{thm: cgpklh}
			Let $a$ be a multisegment such that $\bar{s}_a\in \SL_n(\bc)$, $F_n$ be the functor from Section \ref{ssec: heckmod}, and let $C_a^\vee$ be the orbit of $x_a^\vee$ in $\mc{N}_{\GL_n}^{(\bar{s}_a, q)}$.    
			Writing $P_{a^\vee}:=P_{(C_a^\vee, \1_{C_a^\vee})}$, and $L_{a^\vee}:=L_{(C_a^\vee, \1_{C_a^\vee})}$, 
			\begin{enumerate}
				\item $F_n(S_Z(a))\cong  H_\bullet^{\mathrm{BM}}(\mc{B}_{x_a^\vee}^{s^\vee})_{(C_a^\vee, \1)}$,
				\item $F_n(Z(a))\cong L_{a^\vee}$, and thus
				\item $[S_Z(a):Z(b)]=\sum_{k\in \bz}\dim\mc{H}^k(P_{b^\vee})|_{C_a^\vee}.$
			\end{enumerate}
		\end{theorem}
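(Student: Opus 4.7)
The plan is to establish (1) and (2) by combining Theorem \ref{thm: heckemod} with \cite{Ari}*{Theorem 3.2}, passing through the restriction of scalars $\iota_\ast:\Mod(\mb{H}_{n, q})\to \Mod(\mb{H}_{\SL_n(\bc)})$ of Section 3.2, and then to deduce (3) by invoking Proposition \ref{prop: slnpklh}. The hypothesis $\bar{s}_a\in \SL_n(\bc)$ places $S_Z(a)$ and every $Z(b)$ with $b\leq a$ into $\Rep(\GL_n(F))^Z_{(T, 1)}$, where the commuting diagram at the end of Section 3.2 identifies objects with $\mb{H}_{\SL_n(\bc)}$-modules and the geometric machinery of \cite{CG} becomes available.

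For part (1), Theorem \ref{thm: heckemod} provides the isomorphism $F_n(S_Z(a))\cong \mb{H}_{n, q}\otimes_{\mb{H}_{\vec{n}_a^\vee, q}}\bc_{\vec{n}_a^\vee, \vec{z}_a^\vee}$. Applying $\iota_\ast$ and using the commuting diagram rewrites this as $\mb{H}_{\SL_n(\bc)}\otimes_{\mb{H}_{M, q}}\bc_{\vec{n}_a^\vee, \vec{z}_a^\vee}$, which is exactly the kind of induced module that \cite{Ari}*{Theorem 3.2} identifies with the Borel-Moore homology module $H_\bullet^{\BM}(\mc{B}_{x_a^\vee}^{\bar{s}_a})$; the parameters $\vec{n}_a^\vee$ and $\vec{z}_a^\vee$ were defined via Equations \ref{eqn: n_a}--\ref{eqn: s_a} precisely so that Ariki's dictionary assigns them to the pair $(x_a^\vee, \bar{s}_a)$. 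Since the only $Z_{\SL_n(\bc)}(\bar{s}_a)$-equivariant local system on any orbit is trivial by \cite{AchA}*{Lemma 6.1}, the component-group isotypic decoration $(C_a^\vee, \1)$ is vacuous and picks out the whole module.

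For part (2), $Z(a)$ is the unique irreducible subrepresentation of $S_Z(a)$, so $F_n(Z(a))$ is a simple $\mb{H}_{n, q}$-submodule of $F_n(S_Z(a))$; under $\iota_\ast$ it becomes a simple submodule of $H_\bullet^{\BM}(\mc{B}_{x_a^\vee}^{\bar{s}_a})$. By \cite{CG}*{Theorem 8.6.23} and the triviality of equivariant local systems, simples in this block are parameterized by $Z_{\SL_n(\bc)}(\bar{s}_a)$-orbits in the form $L_{(C,\1)}$; and the distinguished simple $L_{a^\vee}$ (attached to the orbit $C_a^\vee$ containing $x_a^\vee$) appears in $H_\bullet^{\BM}(\mc{B}_{x_a^\vee}^{\bar{s}_a})$ with multiplicity one, as one verifies by evaluating the CG stalk formula at the smooth point $x_a^\vee\in C_a^\vee$. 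The main obstacle will be confirming that the submodule/quotient conventions line up on both sides, so that Zelevinsky's \emph{unique irreducible subrepresentation} $Z(a)$ matches precisely this distinguished simple $L_{a^\vee}$ rather than some other constituent of the standard module; I anticipate tracking Ariki's identification carefully and, if needed, invoking the duality $L_\xi^\vee\cong L_\xi$ from Proposition \ref{prop: slnpklh}(1) to reconcile the two conventions.

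Part (3) will then follow immediately: since $F_n$ is an equivalence on the relevant block and $\iota_\ast$ preserves multiplicities of simples indexed by geometric parameters, parts (1) and (2) yield $[S_Z(a):Z(b)] = [H_\bullet^{\BM}(\mc{B}_{x_a^\vee}^{\bar{s}_a})_{(C_a^\vee, \1)} : L_{b^\vee}]$, which equals $\sum_{k\in\bz}\dim \mc{H}^k(P_{b^\vee})|_{C_a^\vee}$ by Proposition \ref{prop: slnpklh}(2).
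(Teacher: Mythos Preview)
Your plan for (1) and (3) is essentially the paper's: Theorem \ref{thm: heckemod} plus \cite{Ari}*{Theorem 3.2} for (1), then Proposition \ref{prop: slnpklh}(2) for (3). One caveat: the paper only extracts a Grothendieck-group identity from Ariki, writing $[F_n(S_Z(a))]=[H_\bullet^{\BM}(\mc{B}_{x_a^\vee}^{\bar{s}_a})]$, not a genuine module isomorphism. That is all that is needed for (3), but it matters for your approach to (2).

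For (2) the paper does something quite different from what you propose, and the difference is not cosmetic. Your argument wants to transport the \emph{submodule} $Z(a)\hookrightarrow S_Z(a)$ across (1) and identify its image as $L_{a^\vee}$. This requires two things you do not establish: that (1) holds as an honest module isomorphism (the paper only uses the $K$-group statement), and that $L_{a^\vee}$ is specifically the unique simple \emph{submodule} of $H_\bullet^{\BM}(\mc{B}_{x_a^\vee}^{\bar{s}_a})$, not merely a multiplicity-one constituent. You correctly flag the second point as ``the main obstacle'' but do not resolve it; invoking $L_\xi^\vee\cong L_\xi$ from Proposition \ref{prop: slnpklh}(1) does not by itself pin down which constituent sits at the bottom of the module.

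The paper sidesteps this entirely with a triangularity/induction argument carried out purely in the Grothendieck group. Base case: when $a$ has no linked segments, $Z(a)=S_Z(a)$ is irreducible and $C_a^\vee$ is the open orbit, so the $K$-group identity from (1) together with the support condition on the $P_{b^\vee}$ forces $F_n(Z(a))\cong L_{a^\vee}$. Inductive step: if $a$ is obtained from $b$ by a single simple operation, then $[S_Z(b)]=[Z(b)]+m[Z(a)]$ on the representation side and $[H_\bullet^{\BM}(\mc{B}_{x_b^\vee}^{\bar{s}_b})]=[L_{b^\vee}]+p[L_{a^\vee}]$ on the geometric side (no intermediate orbits); since $F_n(Z(a))\cong L_{a^\vee}$ by induction, comparing the two expansions forces $F_n(Z(b))\cong L_{b^\vee}$ (and $m=p$). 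This never touches the submodule structure and so never needs the module-level version of (1) or any socle/head analysis of the Borel--Moore module.
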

	}
	{\allowdisplaybreaks
		\begin{proof}
			\begin{enumerate}
				\item By Theorem \ref{thm: heckemod} and \cite{Ari}*{Theorem 3.2}, 
				$$[F_n(S_Z(a))]=[\mb{H}_{n, q}\ten_{\mb{H}_{\vec{n}_a^\vee, q}}\bc_{\vec{n}_a^\vee, \vec{z}_a^\vee}]=[H_\bullet^{\mathrm{BM}}(\mc{B}_{x_a^\vee}^{\bar{s}_a})].$$
				
				\item If none of the segments are linked, then $a$ is minimal, $Z(a)\cong S_Z(a)$. 
				By Lemma \ref{lem: zelvarcg} 2) $C_a$ is the maximal/open orbit, and since equivariant isomorphisms preserve the ordering, so is $C_a^\vee$.  
				Thus in the Grothendieck group of $\mb{H}_{n, q}$-modules
				$$[F_n(Z(a))]=[ F_n(S_Z(a))]= [H_\bullet^\BM(\mc{B}_{x_a^\vee}^{\bar{s}_a})],$$
				but since $F_n(Z(a))$ is irreducible, it must be that $F_n(Z(a))\cong H_\bullet^\BM(\mc{B}_{x_a^\vee}^{\bar{s}_a})$.  
				By Proposition \ref{prop: slnpklh},
				\begin{align*}
					[H_\bullet^{\tx{BM}}(\mc{B}_{x_a^\vee}^{\bar{s}_a})_\xi:L_{b^\vee}]
					&=\sum_{k\in \bz}\dim \mc{H}^k(i_{x_a^\vee}^\ast P_{b^\vee})|_{C_a^\vee},
				\end{align*}
				but since $P_{b^\vee}$ is supported on $\bar{C}_b^\vee$, this is only non-zero for $a=b$, in which case it is equal to 1, thus $H_\bullet^{\BM}(\mc{B}_{x_a^\vee}^{\bar{s}_a})_\xi\cong L_{b^\vee}$.  
				
				If $a$ is obtained from $b$ by a simple operation, then, there are no orbits $D$ in $\mc{N}_{\GL_n}^{(s, q)}$ such that $C_b^\vee<D<C_a^\vee$.  
				Since $F_n$ is exact, it induces a map on the Grothendieck groups.  
				We have that
				$$[S_Z(b)]=[Z(b)] + m[Z(a)],$$
				for some integer $m$.  
				As in the proof of Proposition \ref{prop: slnpklh}, there is an integer $p$ so that in the Grothendieck group of $\mb{H}_{\SL_n(\bc), q}$-modules,
				$[H_\bullet^{\BM}(\mc{B}_{x_b^\vee}^{\bar{s}_b})]=[L_{b^\vee}]+p[L_{a^\vee}]$, which therefore holds for the corresponding $\mb{H}_{n, q}$-modules, and so
				\begin{align*}
					[F_n(Z(b))] + m[F_n(Z(a))]&=[F_n(S_Z(b))]\\
					[F_n(Z(b))] + m[L_{a^\vee}]&=[H_\bullet^\BM(\mc{B}_{x_b^\vee}^{\bar{s}_b})] \\
					&=[L_{b^\vee}]+p[L_{a^\vee}].
				\end{align*}
				Therefore $F_n(Z(b))\cong L_{b^\vee}$ and $m=p$.  
				The result follows inductively as in the proof of Proposition \ref{prop: slnpklh}.

				\item By the above,
				\begin{align*}
					[S_Z(a):Z(b)]&=[F_n(S_Z(a)): F_n(Z(b))] \\
					&=[H_\bullet^\BM(\mc{B}_{x_a^\vee}^{s_a}):L_{b^\vee}] \\
					&=\sum_{k\in \bz}\dim \mc{H}^k(P_{a^\vee})|_{C_b^\vee}.
				\end{align*}
				Taking $w$ to be as in the preceding discussion, we have an equivariant isomorphism $\Ad(w)(-)^t:\mc{N}_{\GL_n}^{(\bar{s}_a, q)}\to \mc{N}_{\GL_n}^{(s_a, q)}$ taking $C_b^\vee$ to $C_b$, and thus
				$$[S_Z(a):Z(b)]=\sum_{k\in \bz}\dim \mc{H}^k(P_{a^\vee})|_{C_b^\vee}=\sum_{n\in \bz}\dim\mc{H}^n(P_b)|_{C_a}.$$
			\end{enumerate}
		\end{proof}
	}
	
	\begin{corollary}\label{cor: cgpklh}
		For any multisegments $a,b$ of $\GL_n(F)$ with trivial inertial support, 
		$$[S_Q(a):Q(b)]=\sum_{n\in \bz}\dim \mc{H}^n(P_b)|_{C_a}.$$
	\end{corollary}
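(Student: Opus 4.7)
The plan is to reduce this corollary to Theorem \ref{thm: cgpklh}(3) by invoking Proposition \ref{prop: zmult=qmult}, which was specifically designed to bridge the Zelevinsky-style multiplicities $[S_Z(a):Z(b)]$ and the Langlands-style multiplicities $[S_Q(a):Q(b)]$. Since both $a$ and $b$ are multisegments with trivial inertial support of $\GL_n(F)$, Theorem \ref{thm: cgpklh} applies directly: part (3) of that theorem, together with the equivariant isomorphism $\Ad(w)(-)^t$ constructed in the preceding discussion (which identifies $(\mc{N}_{\GL_n}^{(\bar{s}_a,q)}, C_b^\vee)$ with $(\mc{N}_{\GL_n}^{(s_a,q)}, C_b)$), gives the geometric formula
\begin{equation*}
[S_Z(a):Z(b)] = \sum_{n\in\bz}\dim\mc{H}^n(P_b)|_{C_a}.
\end{equation*}

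Combining this with Proposition \ref{prop: zmult=qmult}, which asserts the equality $m_Q(b;a) = m_Z(b;a)$ obtained from the Aubert-Zelevinsky duality's action on the Grothendieck group, immediately yields
\begin{equation*}
[S_Q(a):Q(b)] = [S_Z(a):Z(b)] = \sum_{n\in\bz}\dim\mc{H}^n(P_b)|_{C_a},
\end{equation*}
as required. No step in this derivation requires additional work beyond what has already been recorded in those results.

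Since this proof is essentially a two-line citation chain, there is no substantive obstacle; the conceptual content resides entirely in Theorem \ref{thm: cgpklh} and Proposition \ref{prop: zmult=qmult}. The role of this corollary is simply to repackage the conclusion of Theorem \ref{thm: cgpklh} in the form that matches \cite{Vog}*{Conjecture 8.11}, making it the natural bridge to the general case treated in the next section.
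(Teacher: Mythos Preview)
Your reduction via Proposition \ref{prop: zmult=qmult} is correct, but you have overlooked a hypothesis of Theorem \ref{thm: cgpklh}: that theorem is stated only for multisegments $a$ with $\bar{s}_a\in \SL_n(\bc)$, i.e.\ for those whose associated semisimple element has determinant one. An arbitrary multisegment with trivial inertial support need not satisfy this (equivalently, the exponents $a_1,\ldots,a_n$ need not sum to zero), so Theorem \ref{thm: cgpklh} does \emph{not} apply directly, and your sentence ``Theorem \ref{thm: cgpklh} applies directly'' is the gap.

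The paper's proof fills exactly this gap by a twisting argument. One sets $z:=(a_1+\cdots+a_n)/n$ and passes to the shifted multisegments $a_{-z},b_{-z}$, for which $\bar{s}_{a_{-z}}\in\SL_n(\bc)$ and Theorem \ref{thm: cgpklh} genuinely applies. On the representation side, Lemma \ref{lem: twist_seg} and exactness of tensoring by $\nu^{-z}$ give $m(a;b)=m(a_{-z};b_{-z})$; on the geometric side, the very definition $C_b:=C_{b_{-z}}$ in Case 3 of Section 3.1 (using $\mc{N}_{\GL_n}^{(q^{-z}s_a,q)}=\mc{N}_{\GL_n}^{(s_a,q)}$) makes the right-hand side unchanged under the shift. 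Adding this twist-and-untwist step to your argument makes it complete and essentially identical to the paper's.
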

	\begin{proof}
		
		Every multisegment $b$ of an unramified representation satisfies $b\leq a$ for a multisegment of the form
		$$a=\{[a_1], \ldots, [a_n]\}.$$
		By the arguments above, defining $z:=(a_1+\cdots + a_n)/n$, $S_Z(a_{-z})$ has trivial central character, as does each subquotient which includes $Q(b')$ for each $b'\leq a_{-z}$.  
		In particular, each $b'=b_{-z}$ for some $b\leq a$.  
		By Lemma \ref{lem: twist_seg}, 
		$$\nu^{-z}\ten Q(b)\cong Q(b_{-z}),$$
		and since tensoring by $\nu^{-z}$ is an exact functor, we find that for all $b, c\leq a$, $m(b;c)=m(b_{-z}; c_{-z})$, which is to say that
		$$[S_Q(a):Q(b)]=[S_Z(a):Z(b)]=\sum_{n\in \bz}\dim\mc{H}^n(P_{b_{-t}})|_{C_{a_{-t}}}=\sum_{n\in \bz}\dim\mc{H}^n(P_{b})|_{C_{a}},$$
		concluding the result.   
	\end{proof}
	
	The above Corollary, together with the rest of the work in this section, demonstrates how \cite{Zel}*{Hypothesis 1.9} for multisegments with trivial inertial support, is a consequence of \cite{CG}*{Theorem 8.6.23}.

	Now, consider two diagonal matrices
	\begin{align*}
		s&=\tx{diag}((p^e)^{a_1}, \ldots, (p^e)^{a_r}),\\
		s'&=\tx{diag}((p^f)^{a_1}, \ldots, (p^f)^{a_r}).
	\end{align*}
	Observe that if $x$ is in the $p^e$-eigenspace of conjugation by $s$, then $x_{ij}$ can only be non-zero when $a_i-a_j=1$. 
	Since this is independent of $e$, we conclude that 
	\begin{align*}
		\mc{N}_{\GL_n}^{(s, p^e)}&=\mc{N}_{\GL_n}^{(s', p^f)} \\
		Z_{\GL_n(\bc)}(s)&=Z_{\GL_n(\bc)}(s').
	\end{align*}
	Hence, it follows that for any unramified multisegments,
	$$m(a;b)_{p^e}=\sum_{n\in \bz}\dim \mc{H}^n(P_b)|_{C_a}=m(a;b)_{p^f}.$$
	

	\subsection{The Langlands correspondence and Vogan varieties}
	
	Our goal for this section is to recall Vogan's version of the $p$-adic Kazhdan-Lusztig hypothesis in Conjecture \ref{pKLH}.  
	In order to articulate this, we must first review the local Langlands correspondence for $\GL_n(F)$. 
	
	The local Langlands correspondence is fundamentally about the \emph{reciprocity} map $\rec$ which maps (isomorphism classes of) irreducible representations to (equivalence classes of) \emph{Langlands parameters}.  
	The general definition will not concern us here, and in fact, we will find is easier to focus on \emph{Weil-Deligne} representations, though some comments will be made on the equivalence of these perspectives.

	\begin{definition}
		A \emph{Weil-Deligne} representation $(\sigma, V, N)$ is a representation $\sigma:W_F\to \GL(V)$ together with a nilpotent linear endomorphism $N:V\to V$ such that $\Ad(\sigma(w))N=\norm{w}N$. 
	\end{definition}

	\begin{definition}
		Let $\eta_n:W_F\to \GL_n(\bc)$ by defined by 
		$$\eta_n:=\bigoplus_{i=0}^{n-1}\omega^i,$$
		and for the standard basis $e_0, \ldots, e_{n-1}$ of $\bc^n$, define the operator $N_n$ such that
		by 
		\begin{align*}
			N_n\cdot e_i&=e_{i+1}, \ \ \ \ 0\leq i \leq n-2, \\
			N_n\cdot e_{n-1}&=0. 
		\end{align*}
		
		We write $\tx{sp}(n)$ for the Weil-Deligne representation $(\eta_n, N_n)$.  
	\end{definition}
	
	For a multisegment $a=\{[\rho_i(b_i), \rho_i(c_i)]\}_{i=1}^r$, let $\sigma_i=\rec(\rho_i)|_{W_F}$, and $n_{i}=b_i-c_i+1$.  
	Then, the reciprocity map 
	$$\tx{rec} \lb Q(\Delta_1, \ldots, \Delta_r)\rb=\bigoplus_{i=1}^r (\omega^{a_i}\eta_{n_i}\ten\sigma_i,N_i\ten I_{n_i} ),$$
	describes a bijection between the smooth irreducible representations of $\GL_n(F)$ and Weil-Deligne representations. 
	We call the first factor $\lambda_a:=\bigoplus_{i=1}^r\omega^{a_i}\eta_{n_i}\ten\sigma_i$ the \emph{infinitesimal parameter} of $Q(a)$.  
	For the choice of basis appearing the the definition of $\tx{sp}(n)$, the second factor of $\rec(Q(a))$ is represented by the matrix $X_a:=\bigoplus_{i=1}^r J(n_i)^t$.  
	
	The perspective taken in \cite{Vog} is that a Langlands parameter is a particular kind of homomorphism
	$$\phi:\bc\rtimes W_F\to \GL_n(\bc),$$
	where the semi-direct product is given by the action $w\cdot z= \omega(w) z$.  
	The corresponding infinitesimal parameter is defined to be $\lambda_\phi:=\phi|_{W_F}$.  
	Each Weil-Deligne representation $(\sigma, N)$ determines such a homomorphism by defining
	$$\phi(w, z):=\sigma(w)\exp(zN),$$
	and every Langlands parameter arises in this way.  
	That is, there is a bijection between Weil-Deligne representations and Langlands parameters in this sense.  
	Let $\phi, \psi$ be a pair of Weil-Deligne representations, Langlands parameters, of infinitesimal parameters.  
	We say they are equivalent if there exists $g\in\GL_n(\bc)$ such that $\phi=\Ad(g)\circ \psi$.  
	We will say a little more about this when we discuss Vogan varieties.

	We can see from the above that for some maximal multisegment $a$, the irreducible representations $Q(b)$ with infinitesimal parameter equivalent to that of $Q(a)$ are exactly those for which $b\leq a$.  
	
	Let $G$ be a reductive $p$-adic group, and $\lambda:W_F\to {}^LG$ be an infinitesimal parameter.  
	Define the reductive group
	$$\hat{G}^{\lambda(I)}=\{g\in \hat{G} | \forall w\in I,  \Ad(\lambda(w))g=g\}.$$
	For the Lie algebra $\hat{\mf{g}}^{\lambda(I_F)}$, we define
	\begin{align}
		V_\lambda&:=\{x\in \hat{\mf{g}} |  \Ad(\lambda(\mf{f}))x=qx\},\label{eqn: Vl}\\
		H_\lambda&:=\{x\in \hat{G} | \forall w\in W_F,  \Ad(\lambda(w))x=qx\}\label{eqn: Hl}
	\end{align}
	where $q$ is the order of the residue field of $F$. 
	Note that $V_\lambda$ is exactly the $q$-eigenspace of $\Ad(\lambda)$ in $\hat{\mf{g}}^{\lambda(I_F)}$, as described in \cite{Vog}*{Equation 4.e}.  
	We have an action of $h\in H_\lambda$ on $x\in V_\lambda$, given by $h\cdot x:= \Ad(h)x$.  
	By \cite{Vog}*{Corollary 4.6}, and the discussion proceeding it, the $H_\lambda$-orbits are in bijection with Langlands parameters/Weil-Deligne representations by sending $x\in V_\lambda$ to $(\lambda, x)$.  
	
	Given an orbit $\mc{O}$ and an irreducible equivariant perverse sheaf $P$ on $V_\lambda$, we define the \emph{equivariant Euler characteristic} $\chi_\mc{O}$,
	$$\chi_\mc{O}(P)=\sum_{n\in \bz}(-1)^n\dim \mc{H}^n(P)|_\mc{O}.$$
	For $\mc{L}\in \Loc_H(\mc{O})$, define 
	$\chi_{(\mc{O}, \mc{L})}(P)$ to be the multiplicity of $\mc{L}$ in $\chi_\mc{O}(P)$. 
	
	Recall that by a \emph{geometric parameter}, we mean a pair $(\mc{O}, \mc{L})$ where $C\subseteq V_\lambda$ is an orbit and $\mc{L}$ is an irreducible $H_\lambda$-equivariant local system on $C$, and we write $P_\gamma$ for the associated intersection cohomology complex. 
	For a geometric parameter $\gamma$, we write $\gamma=(\mc{O}_\gamma, \mc{L}_\gamma)$.  
	Let $\chi_\psi(P_\gamma)$ be the multiplicity of $\mc{L}_\xi$ in 
	$$\sum_\bz (-1)^n [\mc{H}^n(P_\gamma)|_{\mc{O}_\xi}],$$
	as a sum in the Grothendieck group of $\Loc_H(\mc{O}_\xi)$.  
	
	\begin{definition}[The geometric character matrix, \cite{Vog}*{Definition 8.7}]
		We define the \emph{geometric character matrix} 
		$$c_g[\gamma, \xi]=(-1)^{d_\gamma}\chi_\gamma(P_\xi),$$
		where $d_\gamma=\dim \mc{O}_\gamma$.
		
		We will also write $\hat{c}_g[\gamma, \xi]:=(-1)^{d_\gamma}c_g[\gamma, \xi]=\chi_\gamma(P_\xi)$ for convenience.  
	\end{definition}
	
	By the local Langlands correspondence, there is a bijection between representations $\pi$ of infinitesimal parameter $\lambda$, and geometric parameters $\gamma$ for $V_\lambda$.  
	Thus, we write $\pi_\gamma$ for the unique irreducible representation corresponding to $\gamma=(\mc{O}, \mc{L})$, and write $S_\gamma$ for its standard representation.  
	Then, for any pair of geometric parameter $\gamma$ and $\xi$, define $m_r[\gamma, \xi]$ to be the multiplicity of $\pi_\gamma$ in $S_\xi$.  
	
	The original  \cite{Vog}*{Conjecture 8.11},  of the $p$-adic Kazhdan-Lusztig hypothesis is that
	given a connected, reductive, quasi-split algebraic group $G$ over a $p$-adic local field $F$, and an infinitesimal parameter $\lambda:W_F\to{}^LG$,
	$$m_r[\gamma, \xi]=(-1)^{d_\xi}c_g[\xi, \gamma]=\hat{c}_g[\xi, \gamma]=\chi_\xi(P_\gamma).$$
	
	In other words, the conjecture predicts that ${}^tm_r=\hat{c}_g$.  
	It was mentioned in \cite{Solp} that \cite{Vog}*{Conjecture 8.11} contains some signs which are useful for real reductive groups, but are better left out in the $p$-adic case, as the following example highlights. 
	
	\begin{example}
		For the infinitesimal parameter $\lambda=\omega^{1/2}\oplus\omega^{-1/2}$, there are two orbits $\mc{O}_0, \mc{O}_1$, and one computes that
		\begin{center}
			\begin{tabular}{c|c c}
				$c_g$ & $P_{(\mc{O}_0, \1_{\mc{O}_0})}$ & $P_{(\mc{O}_1, \1_{\mc{O}_0})}$ \\
				\hline 
				$\mathbbm{1}_{(\mc{O}_0, \mathbbm{1}_{\mc{O}_1})}$ &  1 & -1 \\
				$\mathbbm{1}_{(\mc{O}_1, \mathbbm{1}_{\mc{O}_1})}$ & 0 & 1
			\end{tabular}
		\end{center}
		and thus
		\begin{center}
			\begin{tabular}{c|c c}
				$\hat{c}_g$ & $P_{(\mc{O}_0, \mathbbm{1}_{\mc{O}_0})}$ & $P_{(\mc{O}_1, \mathbbm{1}_{\mc{O}_1})}$ \\
				\hline 
				$\mathbbm{1}_{(\mc{O}_0, \mathbbm{1}_{\mc{O}_0})}$ &  1 & -1 \\
				$\mathbbm{1}_{(\mc{O}_1, \mathbbm{1}_{\mc{O}_1})}$ & 0 & -1
			\end{tabular}
		\end{center}
	\end{example}
	The above matrix has negative entries, and therefore cannot possibly be the multiplicities for irreducible representations in standard representations.  
	Thus we put forward the slight rephrasing.  
	
	\begin{conjecture}[Vogan's (augmented) Kazhdan-Lusztig hypothesis]\label{pKLH}
		Given a connected reductive algebraic group $G$ over a locally compact non-Archimedean local field $F$, and an infinitesimal parameter $\lambda:W_F\to{}^LG$, for all $\gamma, \xi$,
			$$m_r[\gamma, \xi]=\sum_{n\in \bz}\dim \Hom_{\Loc_H(C_\xi)}(\mc{H}^n(P_\gamma)|_{C_\xi}, \mc{L}_\xi).$$
	\end{conjecture}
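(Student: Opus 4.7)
The plan is to establish the conjecture for $G=\GL_n(F)$ (i.e.\ Theorem \ref{thm: pklh}) by reducing to the trivial-inertial-support case, applying Corollary \ref{cor: cgpklh}, and then converting the resulting sum of stalk dimensions into the signed Euler characteristic $\hat{c}_g$ via a parity-vanishing argument. Given an infinitesimal parameter $\lambda\colon W_F\to\GL_n(\bc)$, the discussion of Section 3.3 shows the geometric parameters on $V_\lambda$ are in bijection with multisegments $b\leq a$ for the unique maximal multisegment $a$ with infinitesimal parameter $\lambda$. Combining Theorem \ref{thm: equiv_seg_stand} (which identifies $S_Q(a)$ with the Langlands standard) and Proposition \ref{prop: zmult=qmult}, the representation-theoretic multiplicity $m_r[\gamma_b,\xi_c]$ equals $m(b;c)$, so the goal becomes
\[
m(b;c)\;=\;(-1)^{d_{\gamma_b}}\,\hat{c}_g[\xi_c,\gamma_b].
\]

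First I would reduce to trivial inertial support. Decomposing $\lambda$ according to the $\nu^{\bz}$-orbits of supercuspidals in its cuspidal support gives $a=a_1+\cdots+a_r$ satisfying the hypothesis of Proposition \ref{prop: prodmult}, together with a factorization $V_\lambda\cong \prod_i V_{\lambda_i}$ that is compatible with $H_\lambda$-orbits, equivariant local systems, and IC extensions (so $P_\gamma$ and the stalk multiplicities factor as external products). Proposition \ref{prop: prodmult} provides the matching factorization on the representation side, reducing to the case of simple inertial support; then Theorem \ref{thm: simpunramseg} and Corollary \ref{cor: unramseg}, together with the evident equivariant isomorphism of Vogan varieties under the extension of scalars produced there, reduce further to the trivial-inertial-support case.

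In that case $\lambda$ is unramified: $\lambda(I_F)=1$ and $\lambda(\mf{f})=s_a$, so
\[
V_\lambda \;=\; \{x\in\hat{\mf{g}} : \Ad(s_a)x = qx\} \;=\; \mc{N}_{\GL_n}^{(s_a,q)},
\]
since any such $x$ is automatically nilpotent, and the action of $H_\lambda=Z_{\hat{G}}(s_a)$ matches. Under the equivariant isomorphism $\Ad(w)(-)^t$ recorded before Theorem \ref{thm: cgpklh}, the orbit $\mc{O}_{\gamma_b}$ corresponds to $C_b$. Applying Corollary \ref{cor: cgpklh} then yields
\[
m(b;a)\;=\;\sum_{n\in\bz}\dim\mc{H}^n(P_b)|_{C_a}.
\]

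The last step is to absorb this unsigned sum into $\hat{c}_g[\xi_a,\gamma_b]=\sum_n(-1)^n\dim\mc{H}^n(P_b)|_{\mc{O}_a}$. This rests on a parity vanishing: for the type-$A$ nilpotent varieties $\mc{N}_{\GL_n}^{(s,q)}$, the IC stalks $\mc{H}^n(P_\gamma)|_{\mc{O}_\xi}$ vanish unless $n\equiv\dim\mc{O}_\gamma\pmod{2}$ (equivalently, the relevant type-$A$ Kazhdan--Lusztig polynomials have nonzero coefficients in one parity only). This is standard, following from the semi-smallness of the Springer-type resolution of $\overline{\mc{N}^{(s,q)}}$ in type $A$. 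Granting this, the unsigned sum equals $(-1)^{d_{\gamma_b}}$ times the signed one, and we obtain the desired equality. The main obstacle I anticipate is bookkeeping rather than conceptual: keeping the dictionary consistent across the transpose, conjugation by $w$, the $b\leftrightarrow b^\vee$ convention, and the dimension parities of $C_b$ versus $\mc{O}_{\gamma_b}$, so that the sign $(-1)^{d_\gamma}$ lands on the correct side and the identification of geometric parameters on $V_\lambda$ with those of Chriss--Ginzburg is fully compatible with the local Langlands correspondence recalled in Section 3.3.
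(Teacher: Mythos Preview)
Your proposal is correct and follows essentially the same route as the paper: reduce to simple (then trivial) inertial support via Proposition~\ref{prop: prodmult}, Theorem~\ref{thm: simpunramseg}, and the factorization of $V_\lambda$ carried out in Sections~3.4--3.5; identify $V_\lambda$ with $\mc{N}_{\GL_n}^{(s_a,q)}$ and invoke Corollary~\ref{cor: cgpklh}; then pass from the unsigned stalk sum to $\hat{c}_g$ using parity vanishing. The only substantive difference is your justification of the parity step: you appeal to semi-smallness of the Springer-type resolution in type~$A$, whereas the paper records this as the Lemma immediately following Conjecture~\ref{pKLH}, deducing it from Zelevinsky's embedding of the graded nilpotent orbits into Schubert varieties \cite{Zelgnc}.
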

	
	\textbf{Note:}  Some other sources such as \cite{Cun} solve the sign issue in a slightly different way.  
	Of course, if both versions of the $p$-adic Kazhdan-Lusztig hypothesis are true, they should be equivalent.  
	One should be able to prove this is the case using certain ``purity" results about perverse sheaves which exist in the literature.  
	However, some work needs to be done in order to make this precise.


	\subsection{The case of simple support}
	
	In order to establish the general case of the $p$-adic Kazhdan-Lusztig hypothesis for $\GL_n(F)$, we will first prove the result for $V_\lambda$ where $\lambda$ is the infinitesimal parameter of a representation $Q(a)$ where $a$ has simple inertial support, which is accomplished in this section with Proposition \ref{prop: 1raygeo}.  
	From this, Corollary \ref{cor: unramseg} follows, which confirms that the multiplicities $m(b;a)$ really only depend on the "combinatorics" of multisegments, and not the underlying representations. 
	
	Suppose $a=\{\Delta_1, \ldots, \Delta_r\}$ is a multisegment of $\GL_{nd}(F)$ with simple inertial support, say with representative $\rho\in \Rep(\GL_d(F))$, with corresponding irreducible representation $\sigma:=\rec(\rho)|_{W_F}$. 
	Choose $a_1, \ldots, a_r, b_1, \ldots, b_r\in \bc$ such that $\Delta_i=[\rho(a_i), \rho(b_i)]$, and thus writing $n_i:=b_i-a_i+1$,
	$$\rec (Q(a))=\bigoplus_{i=1}^r(\omega^{a_i}\sigma_{n_i}\ten \sigma, J(n_i)^t\ten I_s),$$
	where we fix here and for the rest of the article the convention that for linear operators $A, B$, we will realize the matrix of $A\ten B$ explicitly in block-form with the Kroenecker product $[a_{ij}B]_{ij}$.  
	The geometric parameter $\gamma_a$ associated to $\rec (Q(a))$ is $(\mc{O}_a, \1)$ where $\mc{O}_a$ is the $H_{\lambda_a}$-orbit of 
	$$X_a:=\bigoplus_{i=1}^r J(n_i)^t\ten I_s,$$
	and $\1$ is the constant sheaf on $\mc{O}_a$. 
	The corresponding infinitesimal parameter is given by
	$$\lambda_a=\bigoplus_{i=1}^r\omega^{a_i}\eta_{n_i}\sigma=\bigoplus_{i=1}^r\bigoplus_{j=0}^{n_i-1}\omega^{a_i+j}\sigma.$$
	Define $\Delta_i^\circ:=[a_i, b_i]$, and $a^\circ:=\{\Delta_1^\circ, \ldots, \Delta_r^\circ\}$.  
	Let 
	$$X\in \hat{\mf{g}}^{\lambda_a(I_F)}=\{x\in \hat{\mf{g}} | \forall w\in I_F, \lambda_a(w)x=x\lambda_a(w)\},$$
	and writing $X$ in $d$-by-$d$ blocks, we find that for all $i, j$, and all $w\in I_F$
	\begin{align}\label{eqn: IF}
		\omega^{c_i}\sigma(w) X_{ij}=X_{ij}\omega^{c_j}\sigma(w)=X_{ij}\omega^{c_j+1}\omega(w)
	\end{align}
	where the last equality follows because $\omega(w)=1$ for all $w\in I_F$.  
	
	Suppose further that $X$ is in the $q$-eigenspace of $\Ad(\lambda(\mf{f}))$, we find that for all $i,j$, since $\omega(\mf{f})=q$,
	$$\omega^{c_i}\sigma(\mf{f}) X_{ij}=qX_{ij}\omega^{c_j}\sigma(\mf{f})=X_{ij}\omega^{c_j+1}\sigma.$$
	
	As $W_F$ is generated by $\mf{f}$ and $I_F$, by the above and Equation \ref{eqn: IF}, for all $w\in W_F$
	$$\omega^{c_i}\sigma(w) X_{ij}=X_{ij}\omega^{c_j+1}\sigma(w).$$
	In other words, $X_{ij}$ determines an intertwining operator between $\omega^{c_i}\sigma$ and $\omega^{c_j+1}\sigma$, and since these are irreducible representations, $X_{ij}$ can only be non-zero if $\Re(c_i)=\Re(c_j)+1$. 
	Moreover, since $\sigma$ is of Galois type, it must be of the form $\omega^z\sigma_0$ for some $z\in \bc$ and $\sigma_0:W_F \to \mathrm{Gal}(E/F)\to \GL_{d}(\bc)$.  
	In other words, Schur's lemma applies to $\omega^{c_i}\sigma, \omega^{c_j+1}\sigma$, and thus we can conclude that whenever $\Re(c_i)=\Re(c_j)+1$ the matrix $X_{ij}=x_{ij} I_s$ for some scalar $x_{ij}\in \bc$, and every such choice of $x_{ij}\in \bc$ determines an element $X\in V_\lambda$.  
	
	As seen in previous sections, we find that $V_{\lambda_\circ}$ consists of those $n$-by-$n$ matrices $[x_{ij}]$ such that $\Re(c_i)\neq\Re(c_j)+1\implies x_{ij}=0$.  
	Therefore, we obtain an isomorphism
	\begin{align*}
		V_{\lambda_a}&\xrightarrow{f_a} \mc{N}_{\GL_n}^{(s_{a^\circ}, q)}\\
		[x_{ij}I_d]&\mapsto [x_{ij}]
	\end{align*}
	By an entirely similar argument,
	$$H_\lambda=\{g\in \GL_{nd}(\bc) | g = [g_{ij}I_d], \Re(c_i)\neq \Re(c_j)\implies g_{ij}=0\},$$
	and thus we have an isomorphism
	\begin{align*}
		H_{\lambda_a}&\xrightarrow{\varphi_a}Z_{\GL_n(\bc)}(s_{a^\circ}) \\
		[g_{ij}I_d]&\mapsto [g_{ij}]
	\end{align*}
	for which the pair $(f_a, \varphi_a)$ determines an equivariant isomorphism $(V_{\lambda_a}, H_{\lambda_a})\to (\mc{N}_{\GL_n}^{(s_{a^\circ}, q)}, Z_{\GL_n}(s_{a^\circ}))$.
	In particular, 
	$$f_a\lb \bigoplus_{i=1}^r J(n_i)^t\ten I_s\rb =\bigoplus_{i=1}^r J(n_i)^t = x_a.$$
	
	Now, supposing $a$ is maximal and letting $w_b$ be a permutation matrix taking $s_{b^\circ}$ to $s_{a^\circ}$, define $\bar{w}_b:=w_b\ten I_s$. 
	Then, we have a commuting diagram of equivariant isomorphisms
	$$\begin{tikzcd}
		V_{\lambda_b} \arrow[r, "f_b"] \arrow[d, "\bar{w}_b"'] & \mc{N}_{\GL_n}^{(s_{b^\circ}, q)} \arrow[d, "w_b"] \\
		V_{\lambda_a} \arrow[r, "f_a"] & \mc{N}_{\GL_n}^{(s_{a^\circ}, q)}
	\end{tikzcd}$$
	
	In particular, for $X_b\in V_{\lambda_b}$, 
	$$f_a(\bar{w}_b\cdot X_b)=w_b\cdot f_b(X_b) = w_b\cdot x_b,$$
	where $x_b$ is defined as in Equation \ref{eqn: x_a}, and therefore, the orbit of $w_b\cdot f_b(X_b)$ is $C_{b^\circ}$.  
	
	Thus, for any geometric parameters $\gamma, \xi$ of $V_\lambda$, there exists some $b, c\leq a$ such that $\bar{w}_b\cdot X_b\in \mc{O}_\gamma$, and $\bar{w}_c\cdot X_c\in \mc{O}_\xi$.    
	Hence, 
	$$\dim \mc{H}^k(P(\mc{O}_\gamma))|_{\mc{O}_\xi}=\dim \mc{H}^k(P(C_{b^\circ}))|_{C_{c^\circ}}.$$
	
	Thus, we arrive at the following result.  
	
	\begin{proposition}\label{prop: 1raygeo}
		Let $\lambda:W_F\to \GL_n(\bc)$ be an infinitesimal parameter such that for a (equivalently all) irreducible representations $Q(a)$ with infinitesimal parameter $\lambda$, $a$ has trivial inertial support.  
		Then, for any geometric parameters $\gamma$ and $\xi$ of $V_\lambda$,
		$$[S_\gamma: \pi_\xi]=\sum_{n\in \bz}\dim \mc{H}^n(P_\xi)|_{\mc{O}_\gamma}.$$
	\end{proposition}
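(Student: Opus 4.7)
The plan is to leverage the equivariant isomorphism $(f_a, \varphi_a):(V_{\lambda_a}, H_{\lambda_a})\to (\mc{N}_{\GL_n}^{(s_{a^\circ}, q)}, Z_{\GL_n(\bc)}(s_{a^\circ}))$ constructed in the preceding discussion, which in the trivial inertial support case ($d=1$) is essentially the identity (up to re-labeling). First I would fix a maximal multisegment $a$ with trivial inertial support whose infinitesimal parameter is $\lambda$, so that every geometric parameter on $V_\lambda$ corresponds, through the local Langlands correspondence, to a multisegment $b\leq a$. Concretely, I would argue that the $H_\lambda$-orbit $\mc{O}_\gamma$ containing $\bar{w}_b\cdot X_b$ is identified via $(f_a,\varphi_a)$ with the orbit $C_{b^\circ}$ studied in Section 3.1, and the associated equivariant local system is forced to be trivial because $\mc{L}_\gamma, \mc{L}_\xi$ take values in categories of $Z_{\GL_n(\bc)}(s)$-equivariant local systems whose only simple object is the constant sheaf (by \cite{AchA}*{Lemma 6.1}).

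Next I would translate both sides of the desired equality. On the representation-theoretic side, under local Langlands the standard representation $S_\gamma$ corresponds to $S_Q(b)$ and $\pi_\xi$ to $Q(c)$, so $[S_\gamma:\pi_\xi]=[S_Q(b):Q(c)]=m(c;b)$, which by Proposition \ref{prop: zmult=qmult} equals $[S_Z(b):Z(c)]$. On the geometric side, the equivariant isomorphism $(f_a,\varphi_a)$ (combined with the permutation equivariant isomorphism built from $\bar{w}_b$ and $\bar{w}_c$) gives
\[
\dim \mc{H}^n(P_\xi)|_{\mc{O}_\gamma} = \dim\mc{H}^n(P_{c})|_{C_{b}},
\]
since intersection cohomology complexes, restriction, and stalk dimension are all preserved under equivariant isomorphisms of varieties.

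I would then invoke Corollary \ref{cor: cgpklh}, the already-established version of the $p$-adic Kazhdan-Lusztig hypothesis for multisegments of trivial inertial support, to conclude
\[
[S_Q(b):Q(c)] = \sum_{n\in\bz}\dim\mc{H}^n(P_{c})|_{C_{b}},
\]
and chaining this with the two translations above yields the statement. The setup of Section 3.1 and Sections 3.2 has already done the hard work; the main task here is bookkeeping.

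The step I expect to require the most care is verifying that the orbit-by-orbit correspondence genuinely matches the geometric parameters of Vogan's construction with the orbits $C_{b^\circ}$ of Chriss--Ginzburg: the permutation matrices $\bar{w}_b$ that align $\bar{s}_b$ with a common semisimple element must be handled consistently so that the correct orbit (and not its image under a non-trivial twist) is obtained, and that the identification of standard representations $S_\gamma$ on Vogan's side with $S_Q(b)$ on the Zelevinsky--Langlands side is faithful. These are mild compatibility checks, but they are the place where a careless argument could silently fail.
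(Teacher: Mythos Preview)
Your approach is correct for the statement as literally written (trivial inertial support), and the bookkeeping you outline---matching $\mc{O}_\gamma$ with $C_b$ via the equivariant isomorphism $(f_a,\varphi_a)$, identifying $S_\gamma$ with $S_Q(b)$, and then invoking Corollary~\ref{cor: cgpklh}---is exactly right. For $d=1$ this is the whole story.

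However, you should be aware that the paper's own proof is doing more than the statement advertises. The section is titled ``The case of simple support,'' the discussion preceding the proposition sets up the equivariant isomorphism for an arbitrary supercuspidal $\rho$ of $\GL_d(F)$, and the application in Theorem~\ref{thm: pklh} invokes Proposition~\ref{prop: 1raygeo} for each $a_i$ ``belonging to a single ray''---i.e.\ simple, not necessarily trivial, inertial support. The word ``trivial'' in the proposition statement appears to be an error for ``simple.'' Accordingly, the paper's proof inserts an extra step that you omit: it appeals to Theorem~\ref{thm: simpunramseg} (packaged as the forward-referenced Corollary~\ref{cor: unramseg}) to pass from the multisegment $a$ over $F$ to $a^\circ$ with trivial inertial support over an extension $E$ with residue cardinality $q'$, and then uses the field-independence observation following Corollary~\ref{cor: cgpklh} to replace $m(a^\circ;b^\circ)_{q'}$ by $m(a^\circ;b^\circ)_q$. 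Only after this reduction does the paper's argument coincide with yours.

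So: your proof is a clean direct argument for the trivial case, and it is essentially the tail end of the paper's argument. What you are missing is the reduction from simple to trivial inertial support, which is where Theorem~\ref{thm: simpunramseg} and the $q'\mapsto q$ independence enter. If you only need the trivial case your proof stands as is; if you need the proposition in the strength the main theorem actually requires, you must add that reduction step.
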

	\begin{proof}
		Let $a$ be the maximal multisegment such that the infinitesimal parameter $\lambda_a$ of $Q(a)$ is equivalent to $\lambda$.  
		By definition, there exists some $g\in \GL_n(\bc)$ such that $\Ad(g)\lambda =\lambda_a$.  
		Therefore, we have an isomorphism $\Ad(g):V_\lambda\to V_{\lambda_a}$.  
		
		Given any $\gamma, \xi$ be geometric parameters, and choose multisegments such that $\rec (Q(a))$ is equivalent to $\gamma$, and $\rec(Q(b))$ is equivalent to $\xi$.  
		
		By Corollary \ref{cor: unramseg}, there exists a $q'$ such that
		$$[S_Q(a): Q(b)]= m(a^\circ; b^\circ)_{q'},$$
		and by the discussion proceeding Corollary \ref{cor: unramseg}, we know that $m(a^\circ; b^\circ)_{q'}=m(a^\circ; b^\circ)_q$.  
		Therefore, 
		
		\begin{align*}
			[S_\gamma: \pi_\xi]&= [S_Q(a): Q(b)]\\
			&=m(a^\circ; b^\circ)_{q}, \\
			& =\sum_{n\in \bz}\dim\ \mc{H}^n(P_{b^\circ})|_{C_a^\circ}\\
			& =\sum_{n\in \bz}\dim \mc{H}^n(P_\xi)|_{\mc{O}_\gamma},
		\end{align*}
		where the last equality follows from preceding discussion. 
	\end{proof}

	\begin{corollary}\label{cor: unramseg}
		For any multisegments $a$ and $b$ with simple inertial support, letting $a^\circ, b^\circ$ be as in Theorem \ref{thm: simpunramseg},
		$$[S_Z(a): Z(b)]=m(a^\circ ; b^\circ)_q.$$
	\end{corollary}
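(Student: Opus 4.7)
The approach I would take is to combine Theorem \ref{thm: simpunramseg} with the field-independence observation established immediately after the proof of Corollary \ref{cor: cgpklh}. First, I would apply Theorem \ref{thm: simpunramseg} to the multisegments $a, b$ of simple inertial support in $\GL_{nd}(F)$, obtaining a finite extension $E/F$ together with multisegments $a^\circ, b^\circ$ of $\GL_n(E)$ having trivial inertial support, such that
$$[S_Z(a): Z(b)] = m(a;b) = m(a^\circ; b^\circ)_{q_E},$$
where $q_E$ denotes the cardinality of the residue field of $E$ and the subscript records the field over which the multiplicity is being computed.

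Next, I would invoke Corollary \ref{cor: cgpklh} twice: once with base field $E$, producing the identity
$$m(a^\circ; b^\circ)_{q_E} = \sum_{n\in \bz}\dim \mc{H}^n(P_{b^\circ})|_{C_{a^\circ}},$$
where the right-hand side is computed on the variety $\mc{N}_{\GL_n}^{(s, q_E)}$ built from the diagonal matrix $s$ with entries $q_E^{a_i}$; and once with base field $F$, producing the analogous identity for $m(a^\circ; b^\circ)_q$ on the variety $\mc{N}_{\GL_n}^{(s', q)}$ with $s'$ assembled from the entries $q^{a_i}$.

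The pivotal step, and the one the whole argument hinges on, is the observation made immediately after the proof of Corollary \ref{cor: cgpklh}: the two varieties $\mc{N}_{\GL_n}^{(s, q_E)}$ and $\mc{N}_{\GL_n}^{(s', q)}$ coincide as subvarieties of $\mf{gl}_n(\bc)$, because the condition $a_i - a_j = 1$ picking out the nonzero entries is independent of the choice of residue cardinality; similarly their centralizers in $\GL_n(\bc)$ agree. Hence the orbit stratifications and the associated intersection cohomology complexes $P_{b^\circ}$ agree, so the two stalk sums are equal, giving $m(a^\circ; b^\circ)_{q_E} = m(a^\circ; b^\circ)_q$. Combined with the first display this yields the stated equality.

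I do not anticipate a genuine obstacle here: the substantive content (the type-theoretic reduction in Theorem \ref{thm: simpunramseg} and the geometric description in Corollary \ref{cor: cgpklh}) is already in place, and the corollary amounts to assembling these pieces. The only verification needed is that the stalk computations are insensitive to swapping $q_E$ for $q$, which as recalled above is immediate from the form of the defining equations cutting out $\mc{N}_{\GL_n}^{(\,\cdot\,, \,\cdot\,)}$.
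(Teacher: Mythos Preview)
Your proposal is correct and matches the paper's own argument essentially verbatim: the paper applies Theorem \ref{thm: simpunramseg} to obtain $[S_Z(a):Z(b)]=m(a^\circ;b^\circ)_{q'}$ for some $q'$, and then appeals to the field-independence observation following Corollary \ref{cor: cgpklh} (that $\mc{N}_{\GL_n}^{(s,q')}=\mc{N}_{\GL_n}^{(s',q)}$ and their centralizers agree) to conclude $m(a^\circ;b^\circ)_{q'}=m(a^\circ;b^\circ)_q$. Your write-up simply makes explicit the two invocations of Corollary \ref{cor: cgpklh} that underlie that observation.
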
 
	\begin{proof}
		By Theorem \ref{thm: simpunramseg}, there exists a $q'$ such that 
		$$[S_Z(a):Z(b)]=m(a;b)_q=m(a^\circ; b^\circ)_{q'},$$
		which, by the discussion above, is equal to $m(a^\circ; b^\circ)_q$.  
	\end{proof}

	

	\subsection{The general case}
	
	In this section, we prove Theorem \ref{thm: pklh}, being the main result of this paper. 
	In order to do this, we first demonstrate that arbitrary Vogan varieties (for $\GL_n(F)$) can be decomposed in such a way that the general result follows form the previous section. 
	
	For an arbitrary multisegment $a$, by Lemma \ref{lem: multirays} and Proposition \ref{prop: prodmult} we can write $a=a_1+\cdots +a_r$ such that $\Delta\in a_i, \Delta'\in a_j$ have the same inertial support if and only if $i=j$.   
	Fixing representatives $\rho_1, \ldots, \rho_r$ for the inertial supports, we can write
	$$a_i=\{\Delta_{i1}, \ldots, \Delta_{is_i}\},$$
	where $\Delta_{ij}=[\rho_i(b_{ij}), \rho_i(c_{ij})]$, for some $b_{ij}, c_{ij}\in \bc$. 
	Let $\sigma_i=\rec(\rho_i)|_{W_F}$, and $n_{ij}=c_{ij}-b_{ij}+1$.  
	Then, 
	$$\sigma:=\rec(Q(a))=\bigoplus_{i=1}^r\bigoplus_{j=1}^{s_i}\omega^{b_{ij}}\eta_{n_{ij}}\sigma_i.$$

	In order to determine $V_\lambda$, we must first compute
	$$\hat{\mf{g}}^{\sigma(I_F)}=\{x\in \hat{\mf{g}} | \forall w\in I_F, \sigma(w)x=x\sigma(w)\}.$$
	If we suppose that $X\in \hat{\mf{g}}^{\sigma(I_F)}$, then writing $X$ in block form we must have that for each block $X_{ik}$, and each $w\in I_F$,
	$$\bigoplus_{j=1}^{s_i}\omega^{b_{ij}}\eta_{n_{ij}}\sigma_i(w)X_{ik}=X_{ik}\bigoplus_{j=1}^{s_i}\omega^{b_{kj}}\eta_{n_{kj}}\sigma_k(w),$$
	and since $\omega$ is trivial on $I_F$, in fact
	\begin{align}\label{eqn: resI}
		\bigoplus_{j=1}^{s_i}\omega^{b_{ij}}\eta_{n_{ij}}\sigma_i(w)X_{ik}=\omega(w)X_{ik}\bigoplus_{j=1}^{s_i}\omega^{b_{kj}}\eta_{n_{kj}}\sigma_k(w).
	\end{align}
	Now suppose further that $X$ is in the $q$-eigenspace of $\Ad(\sigma(w))$.  Then
	$$\bigoplus_{j=1}^{s_i}\omega^{b_{ij}}\eta_i\sigma_{n_ij}(\mf{f})X_{ik}=qX_{ik}\bigoplus_{j=1}^{s_i}\omega^{b_{kj}}\eta_k\sigma_{n_kj}(\mf{f})=\omega(\mf{f})X_{ik}\bigoplus_{j=1}^{s_i}\omega^{b_{kj}}\eta_k\sigma_{n_kj}(\mf{f}).$$
	This shows that $X_{ik}$ intertwines the representations 
	$$\bigoplus_{j=1}^{s_i}\omega^{b_{ij}}\eta_i\sigma_{n_ij}, \ \ \ \ \ \tx{and} \ \ \ \ \ \bigoplus_{j=1}^{s_i}\omega^{b_{kj}+1}\eta_k\sigma_{n_kj},$$
	at $\mf{f}$, and Equation \ref{eqn: resI} shows that it intertwines these representations when restricted to $I_F$.  
	Since $W_F$ is generated by $I_F$ and $\mf{f}$, it follows that $X_{ik}$ intertwines the representations.  
	However, if $i\neq k$ then by definition there is no $s\in \bc$ such that $\omega^s\eta_i\cong \eta_k$.  
	Thus none of the irreducible composition factors of 
	$$\bigoplus_{j=1}^{s_i}\omega^{b_{ij}}\eta_i\sigma_{n_ij}\cong \bigoplus_{i=1}^{s_i}\bigoplus_{j=0}^{n_{ij}-1}\omega^{b_{ij}+j}\eta_i,$$
	are isomorphic to any of the composition factors of 
	$$\bigoplus_{j=1}^{s_i}\omega^{b_{kj}+1}\eta_k\sigma_{n_kj},$$
	and therefore, there can not be any intertwining operators between them.   
	In other words $X_{ik}=0$ for $i\neq k$.  
	Thus, defining
	$$\lambda_i=\bigoplus_{j=1}^{s_i}\omega^{b_{ij}}\eta_{n_{ij}}\sigma_i,$$
	we have an isomorphism
	\begin{align*}
		V &\xrightarrow{f} V_1\times \cdots \times V_r \\
		\tx{diag}(X_{11}, \ldots, X_{rr})&\mapsto (X_{11}, \ldots, X_{rr})
	\end{align*}
	
	An entirely similar argument provides an isomorphism 
	$$\varphi:H_\lambda\to H_{\lambda_1}\times\cdots \times H_{\lambda_r},$$
	such that the pair $(f, \varphi)$ is an equivariant isomorphism.  Moreover, 
	$$f\lb\bigoplus_{i=1}^r\bigoplus_{j=1}^{s_i} N_{n_{ij}}\ten I_{s_i}\rb=\lb \bigoplus_{j=1}^{s_1}N_{n_{1j}}\ten I_{s_1}, \ldots, \bigoplus_{j=1}^{s_r}N_{n_{rj}}\ten I_{s_r}\rb.$$
	That is, writing $\mc{O}_{a_1+\cdots a_r}$ for the orbit of $\rec(Q(a_1+\cdots + a_r))$, the geometric parameter of $\rec (Q(a_1+\cdots a_r))$ is $(\mc{O}_{a_1+\cdots + a_r}, \1_{\mc{O}_{a_1+\cdots + a_r}})$, and
	$$f(\mc{O}_{a_1+\cdots + a_r})=\mc{O}_{a_1}\times\cdots \times \mc{O}_{a_r}.$$
	
	Writing $\boxtimes$ for the external tensor product,
	\begin{align*}
		f^\ast(P_{(\mc{O}_{a_1}\times\cdots \times \mc{O}_{a_r}, \boxtimes \1_{a_i})} )&\cong P_{a_1+\cdots + a_r}.
	\end{align*}
	
	Therefore, 
	$f_a(C_b)=C_{b_1}\times\cdots \times C_{b_r}$ and thus
	\begin{align*}
		\dim P_a|_{C_b}&=\dim \lb \boxtimes_{i=1}^rP_{a_i}\rb|_{C_{b_1}\times\cdots \times C_{b_r}}\\
		&=\prod_{i=1}^r\dim P_{a_i}|_{C_{b_i}},
	\end{align*}
	which exactly mirrors the result of Proposition \ref{prop: prodmult}.

	\begin{theorem}[$p$-adic Kazhdan-Lusztig hypothesis for $\GL_n(F)$]\label{thm: pklh}
		Given multisegments $a, b$ such that $Q(a), Q(b)$ have infinitesimal parameter $\lambda:W_F\to\GL_n(\bc)$
		$$[S_Q(a): Q(b)]=\sum_{n\in \bz}\dim \mc{H}^n(P_b)|_{C_a}.$$
	\end{theorem}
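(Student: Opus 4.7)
The plan is to reduce the general statement to the simple inertial support case, which is already resolved by Proposition \ref{prop: 1raygeo}, by combining the multisegment decomposition of Section 2.4 with the compatible geometric decomposition of $V_\lambda$ established in the discussion immediately preceding this theorem. Both sides of the claimed identity will factor as products indexed by inertial support classes, and the matching of corresponding factors will follow from the simple case.

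First I would partition the segments of $a$ according to their inertial support, writing $a=a_1+\cdots+a_r$ where each $a_i$ has simple inertial support and segments belonging to different $a_i$ have non-isomorphic inertial supports. Since segments with distinct inertial supports can never be linked, the hypothesis of Lemma \ref{lem: multirays} holds, so any $b$ with the same infinitesimal parameter as $a$ satisfies $b\leq a$ and decomposes compatibly as $b=b_1+\cdots+b_r$ with $b_i\leq a_i$. Proposition \ref{prop: prodmult} then yields
$$[S_Q(a):Q(b)]=m(b;a)=\prod_{i=1}^r m(b_i;a_i)=\prod_{i=1}^r [S_Q(a_i):Q(b_i)].$$

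For the geometric side, the discussion preceding the theorem provides an equivariant isomorphism $(f,\varphi)\colon (V_\lambda,H_\lambda)\to \prod_i (V_{\lambda_i},H_{\lambda_i})$ carrying $C_a$ to $C_{a_1}\times\cdots\times C_{a_r}$ and identifying $P_b$ with the external tensor product $\boxtimes_{i=1}^r P_{b_i}$ (up to pullback along $f$). The Künneth formula for external tensor products of constructible complexes gives
$$\mc{H}^n\lb \boxtimes_{i=1}^r P_{b_i}\rb\cong \bigoplus_{n_1+\cdots+n_r=n}\boxtimes_{i=1}^r \mc{H}^{n_i}(P_{b_i}),$$
so restricting to the product orbit and summing over $n$ yields
$$\sum_{n\in\bz}\dim \mc{H}^n(P_b)|_{C_a}=\prod_{i=1}^r\sum_{n_i\in\bz}\dim \mc{H}^{n_i}(P_{b_i})|_{C_{a_i}}.$$
Since each pair $a_i,b_i$ has simple inertial support, Proposition \ref{prop: 1raygeo} identifies each factor on the right with $[S_Q(a_i):Q(b_i)]$, and multiplying over $i$ matches the product already obtained on the representation side.

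The main conceptual obstacle has already been dispatched earlier in the paper: the simple inertial support case required Theorem \ref{thm: simpunramseg}, the Hecke algebra identification in Theorem \ref{thm: heckemod}, the Chriss--Ginzburg calculation of Section 3.1, and the explicit comparison of Vogan varieties with the varieties $\mc{N}_{\GL_n}^{(s,q)}$ in Section 3.4. What remains for the present theorem is essentially an assembly argument; the only subtlety worth verifying is that the equivariant isomorphism $(f,\varphi)$ really does match $P_b$ with $\boxtimes_i P_{b_i}$, which follows from the uniqueness of IC extensions together with the identification $f^\ast(P_{(\mc{O}_{a_1}\times\cdots\times\mc{O}_{a_r},\boxtimes \1)})\cong P_{a_1+\cdots+a_r}$ supplied in the preceding discussion.
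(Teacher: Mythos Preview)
Your proposal is correct and follows essentially the same route as the paper: decompose both the representation-theoretic and geometric sides along inertial support classes using Proposition \ref{prop: prodmult} and the equivariant product decomposition of $V_\lambda$, then invoke Proposition \ref{prop: 1raygeo} on each factor. One small inaccuracy: it is not true that every $b$ with the same infinitesimal parameter as $a$ satisfies $b\leq a$ (that would require $a$ to be maximal); however, the decomposition $b=b_1+\cdots+b_r$ by inertial support holds regardless, and when some $b_i\not\leq a_i$ both sides of the identity vanish, so the argument goes through unchanged.
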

	\begin{proof}
		By Lemma \ref{lem: multirays} and Proposition \ref{prop: prodmult}, we know that there exist multisegments $a_1, \ldots a_r, b_1, \ldots, b_r$ such that two segments of $a$ (resp. $b$) belong to the same $a_i$ if and only if they have the same cuspidal support. 
		$$[S_Q(a):Q(b)]=m(a;b)=m(b_1;a_1)\cdots m(b_r; a_r).$$
		
		As each $a_i, b_i$ belongs to a single ray, by Proposition \ref{prop: 1raygeo},
		$$m(b_i;a_i)=\sum_{n\in \bz}\dim \mc{H}^n(P_{b_i})|_{C_{a_i}}.$$
		Thus, by the discussion above,
		\begin{align*}
			[S_Q(a):Q(b)]&=m(b_1;a_1)\cdots m(b_r; a_r)\\
			&=\prod_{i=1}^r\dim\mc{H}^n(P_{b_i})|_{C_{a_i}}\\
			&=\dim \mc{H}^n(\boxtimes_{i=1}^rP_{b_i})|_{\bigtimes_{i=1}^r C_{a_i}} \\
			&=\dim \mc{H}^n(P_b)|_{C_a}. 
		\end{align*}
	\end{proof}


	\begin{bibdiv}
		\begin{biblist}
			
			\bib{AchA}{article}{
				title={Combinatorics of Fourier transforms for type A quiver representations},
				author={Achar, Pramod N. and Kulkarni, Maitreyee C. and Matherne, Jacob P.},
				journal={arXiv preprint arXiv:1807.10217},
				date={2018},
				eprint={1807.10217},
			}
			
			\bib{Ari}{article}{
				title={On the decomposition numbers of the Hecke algebra
					of $G(m, 1, n)$},
				author={Ariki, Susumu},
				journal={Journal of Mathematics of Kyoto University},
				volume={36},
				number={4},
				pages={789--808},
				date={1996},
			}

			\bib{Aub}{article}{
				title={Dualité dans le groupe de Grothendieck de la catégorie des représentations lisses de longueur
					finie d’un groupe réductif p-adique},
				author={Aubert, A.M.},
				journal={Trans. Amer. Math. Soc.},
				volume={347},
				number={6},
				pages={ 2179–2189},
				date={1995},
			}

			\bib{BCFZ}{article}{
				author={Balodis, Kristaps},
				author={Cunningham, Clifton},
				author={Fiori, Andrew},
				author={Zhang, Qing},
				title={Representations of $p$-adic groups and orbits with smooth closure in a variety of Langlands parameters},
				date ={2025}, 
				eprint={2504.04163},
			}
			
			\bib{BK}{book}{
				title={The Admissible Dual of {$GL(N)$} via Compact Open Subgroups},
				author={Bushnell, Colin J},
				author={Kutzko, Philip C.},
				series={Annals of Mathematics Studies},
				number={129},
				date={1993},
				publisher={Princeton University Press},
				address={Princeton, NJ},
				isbn={9780691021140},
				doi={10.1515/9781400882496}
			}

			\bib{BB}{article}{
				author = {Beilinson, Alexandre},
				author = {Bernstein, Joseph},
				title = {Localisation de g-modules},
				journal = {Comptes Rendus de l'Académie des Sciences},
				volume = {292},
				number = {1},
				pages = {15-18}
			}

			\bib{BryKas}{article}{
				title={Kazhdan-Lusztig conjecture and holonomic systems},
				author={Brylinski, Jean-Luc },
				author={Kashiwara, M},
				journal={Invent Math},
				volume={64},
				pages={387–410},
				date={1981},
			}

			\bib{CG}{book}{
				author={Chriss, Neil},
				author={Ginzburg, Victor},
				title={Representation theory and complex geometry},
				series={Modern Birkh\"{a}user Classics},
				note={Reprint of the 1997 edition},
				publisher={Birkh\"{a}user Boston, Ltd., Boston, MA},
				date={2010},
				pages={x+495},
				isbn={978-0-8176-4937-1},
				review={\MR{2838836}},
				doi={10.1007/978-0-8176-4938-8},
			}

			\bib{CK}{article}{
				author={Chriss, Neil},
				author={Khuri-Makdisi, Kamal },
				title={On the Iwahori-Hecke algebra of a p-adic group},
				journal={International Mathematics Research Notices},
				volume={1998},
				date={1998},
				pages={85 -- 10},
				doi={https://doi.org/10.1155/S1073792898000087},
			}

			\bib{CFZ:cubic}{article}{
				author={Cunningham, Clifton},
				author={Fiori, Andrew},
				author={Zhang, Qing},
				title = {$A$-packets for {$G_2$} and perverse sheaves on cubics},
				journal = {Adv. Math.},
				volume = {395},
				date = {2022},
			}

			\bib{CFZ:unipotent}{article}{
				author={Cunningham, Clifton},
				author={Fiori, Andrew},
				author={Zhang, Qing},
				title={Toward the endoscopic classification of unipotent representations of p-adic $G_2$},
				date={2022},
				note={\href{https://arxiv.org/abs/2101.04578}{https://arxiv.org/abs/2101.04578}},
			}

			\bib{CRI}{article}{
				author={Cunningham, Clifton},
				author={Ray, Mishty},
				title={Proof of Vogan's conjecture on Arthur packets: 
					irreducible parameters of p-adic general linear groups},
				date={2022}
				journal={ArXiv preprint arXiv:2206.01027}
			}

			\bib{Cun}{article}{
				author={Cunningham, Clifton L.R.},
				author={Fiori, Andrew},
				author={Moussaoui, Ahmed},
				author={Mracek, James},
				author={Xu, Bin},
				title={Arthur packets for $p$-adic groups by way of microlocal vanishing
					cycles of perverse sheaves, with examples},
				journal={Mem. Amer. Math. Soc.},
				volume={276},
				date={2022},
				number={1353},
				pages={ix+216},
				issn={0065-9266},
				isbn={978-1-4704-5117-2},
				isbn={978-1-4704-7019-7},
				doi={10.1090/memo/1353},
			}
			
			\bib{GH}{article}{
				title={An Introduction to Automorphic Representations: With a view toward trace formulae},
				author={Getz, Jayce R. and Hahn, Heekyoung},
				series={Graduate Texts in Mathematics},
				volume={300},
				date={2024},
				publisher={Springer},
				address={Cham},
				isbn={978-3-031-41151-9},
			}

			\bib{Jan}{article}{
				author={Jantzen, Chriss},
				journal = {Annales scientifiques de l'École Normale Supérieure},
				number = {5},
				pages = {527-547},
				publisher = {Elsevier},
				title = {On the Iwahori-Matsumoto involution and applications},
				volume = {28},
				date = {1995}
			}

			\bib{KL}{article}{
				title={Representations of Coxeter groups and Hecke algebras},
				author={Kazhdan, D.},
				author={Lusztig, G},
				journal={Invent Math},
				volume={53},
				pages={165--184},
				date={1979},
			}

			\bib{Kon}{article}{
				title={A note on the Langlands classification and irreducibility of induced representations of p-adic groups},
				author={Konno, Takuya},
				journal={Kyushu Journal of Mathematics},
				volume={57},
				number={2},
				pages={383--409},
				date={2003},
				publisher={Faculty of Mathematics, Kyushu University}
			}

			\bib{PN}{article}{
				author = {Nori, Madhav and Prasad, Dipendra},
				issn = {0075-4102},
				journal = {Journal für die reine und angewandte Mathematik},
				language = {eng},
				number = {762},
				pages = {261-280},
				publisher = {De Gruyter},
				title = {On a duality theorem of Schneider–Stuhler},
				volume = {2020},
				date = {2020},
			}

			\bib{Pro}{article}{
				title={Parabolic Induction Via Hecke Algebras and the Zelevinsky Duality Conjecture},
				author={Kerrigan Procter},
				journal={Proceedings of the London Mathematical Society},
				date={1998},
				volume={77},
			}
			
			\bib{PyvPre}{article}{
				title={Generic Smooth Representations},
				author={Pyvovarov, Alexandre },
				journal={arXiv},
				date={2019},
			}

			\bib{Pyv}{article}{
				title={Generic Smooth Representations},
				author={Pyvovarov, Alexandre },
				journal={Doc. Math.},
				date={2020},
				volume={25},
				pages={2473–-2485},
			}

			\bib{Ree}{article}{
				title={On certain Iwahori invariants in the unramified principal series},
				author={Reeder, Mark},
				journal={Pacific Journal of Mathematics},
				volume={153},
				number={2},
				pages={313--342},
				date={1992},
				publisher={Mathematical Sciences Publishers},
			}
			
			\bib{Rod}{article}{
				title={Repr{\'e}sentations de $ GL (n, k) $ o{\`u} $ k $ est un corps $ p $-adique},
				author={Rodier, Fran{\c{c}}ois},
				journal={S{\'e}minaire Bourbaki},
				volume={24},
				pages={201--218},
				date={1982},
				publisher={Soci{\'e}t{\'e} Math{\'e}matique de France},
			}

			\bib{Solp}{article}{
				author={Solleveld, Maarten},
				title={Graded Hecke algebras, constructible sheaves and the p-adic Kazhdan--Lusztig conjecture},
				note={\href{https://arxiv.org/pdf/2106.03196.pdf}{https://arxiv.org/pdf/2106.03196.pdf}},
				date={2022}
			}

			\bib{Sol2}{article}{
				title={On submodules of standard modules}, 
				author={Solleveld, Maarten },
				date={2024},
				note={preprint, \href{https://arxiv.org/abs/2309.10401}{	arXiv:2309.10401}}
			}

			\bib{Tad}{article}{
				author = {Tadi\'c, Marko},
				title = {Classification of unitary representations in irreducible representations of general linear group (non-Archimedean case)},
				journal = {Annales scientifiques de l'\'Ecole Normale Sup\'erieure},
				pages = {335--382},
				publisher = {Elsevier},
				volume = {Ser. 4, 19},
				number = {3},
				date = {1986},
			}
			
			\bib{Vog}{article}{
				author={Vogan, David A., Jr.},
				title={The local Langlands conjecture},
				volume={145},
				publisher={Amer. Math. Soc., Providence, RI},
				date={1993},
				pages={305--379},
				review={\MR{1216197}},
			}
			
			\bib{VogProof}{article}{
				title={The Kazhdan-Lusztig conjecture for real reductive groups},
				author={Vogan, David A},
				booktitle={Representation Theory of Reductive Groups},
				editor={Wallach, Nolan},
				series={Progress in Mathematics},
				volume={53},
				pages={373--460},
				year={1985},
				publisher={Birkh{\"a}user},
				address={Boston, MA},
				doi={10.1007/978-1-4684-9210-6\_12},
			}
			
			\bib{Zel}{article}{
				author={Zelevinski\u{\i}, A. V.},
				title={The $p$-adic analogue of the Kazhdan-Lusztig conjecture},
				language={Russian},
				journal={Funktsional. Anal. i Prilozhen.},
				volume={15},
				date={1981},
				number={2},
				pages={9--21, 96},
				issn={0374-1990},
				review={\MR{617466}},
			}	
			
			\bib{ZelI2}{article}{
				title={Induced representations of reductive $\mf{p}$-adic groups. II. On irreducible representations of $GL(n)$},
				author={Zelevinsky, Andrei V},
				booktitle={Annales scientifiques de l'{\'E}cole Normale Sup{\'e}rieure},
				volume={13},
				number={2},
				pages={165--210},
				date={1980}
			}	
			
			\bib{Zelgnc}{article}{
				title={Two remarks on graded nilpotent classes},
				author={Zelevinsky, A. V.},
				journal={Uspekhi Mat. Nauk},
				volume={40},
				number={1(241)},
				pages={139-140},
				date={1985},
				publisher={IOP Publishing},
			}

		\end{biblist}
	\end{bibdiv}

\end{document}